\def\sgn{\mathrm{sgn}}
\newcommand{\1}{\mathbbm{1}}
\newcommand{\eps}{\epsilon}
\newcommand{\R}{\mathbb{R}}
\newcommand{\N}{\mathbb{N}}
\newcommand{\E}{\mathbb{E}}
\renewcommand{\P}{\mathbb{P}}
\newcommand{\calA}{\mathcal{A}}
\newcommand{\calF}{\mathcal{F}}
\newcommand{\calT}{\mathcal{T}}
\newcommand{\calG}{\mathcal{G}}
\newcommand{\calM}{\mathcal{M}}
\newcommand{\SAW}{\Gamma^{\mathrm{SAW}}}
\newcommand{\NB}{\Gamma^{\mathrm{NB}}}
\newcommand{\sZeta}{\mathcal{Z}^{\mathrm{short}}}
\newcommand{\lZeta}{\mathcal{Z}^{\mathrm{long}}}
\newcommand{\Vints}{{V_{\ge 3}}}
\newcommand{\Vends}{{V_{\mathrm{end}}}}
\DeclareMathOperator{\Var}{Var}
\DeclareMathOperator{\Binom}{Bin}
\DeclareMathOperator{\Poisson}{Poisson}
\newcommand{\badpaths}{\Gamma^\text{bad}}
\newtheorem{theorem}{Theorem}[section]
\newtheorem{lemma}[theorem]{Lemma}
\newtheorem{corollary}[theorem]{Corollary}
\newtheorem{proposition}[theorem]{Proposition}
\newtheorem{definition}[theorem]{Definition}
\newtheorem{conjecture}[theorem]{Conjecture}
\newtheorem{claim}[theorem]{Claim}
\newtheorem{assumption}[theorem]{Assumption}
\newcommand{\longIn}{m_\text{in}}
\newcommand{\longOut}{m_\text{out}}
\title{A Proof Of The Block Model Threshold Conjecture}
\author{Elchanan Mossel\thanks{
U.C. Berkeley. Supported by NSF grant DMS-1106999, NSF grant CCF 1320105  and DOD ONR grant N000141110140},
\ Joe Neeman\thanks{
U.T. Austin and the University of Bonn. Supported by NSF grant DMS-1106999 and DOD ONR grant N000141110140},
\ and
Allan Sly\thanks{
U.C. Berkeley and the Australian National University. Supported by an
Alfred Sloan Fellowship and NSF grant DMS-1208338.
}}
\begin{document}
\maketitle

\maketitle

\begin{abstract}
We study a random graph model called the ``stochastic block model'' in statistics and the
``planted partition model'' in theoretical computer science.
In its simplest form, this is a random graph
with two equal-sized classes of vertices, with a within-class edge probability of $q$
and a between-class edge probability of $q'$.

A striking conjecture of Decelle, Krzkala, Moore and Zdeborov\'a~\cite{Z:2011}, based on deep, non-rigorous ideas from statistical physics,
gave a precise prediction for the algorithmic threshold of clustering in the sparse planted partition model.
In particular, if $q = a/n$ and $q' = b/n$, $s=(a-b)/2$ and $d=(a+b)/2$ then Decelle et al.\ conjectured
that it is possible to efficiently cluster in a way correlated with the true partition if
$s^2 > d$ and impossible if
$s^2 < d$.
By comparison, until recently the best-known rigorous result  showed that clustering is possible if $s^2 > C d \ln d$ for sufficiently large~$C$.

In a previous work, we proved that indeed it is information theoretically impossible to cluster if $s^2 \le d$ and moreover that  it is information theoretically impossible to even estimate the model parameters from the graph when $s^2 < d$.
Here we prove the rest of the conjecture by providing an efficient algorithm for clustering in a way that is correlated with the true partition when $s^2 > d$. A different independent proof of the same result was recently obtained by Massouli\'e~\cite{Massoulie:13}.

\newpage
\end{abstract}

\section{Introduction}

\subsection{The stochastic block model}
We consider the simplest version of the stochastic block model, namely the version with two symmetric states:
\begin{definition}[The stochastic block model]
  For $n \in \N$ and $q, q' \in (0, 1)$, let $\calG(n, q, q')$
  denote the model of random, $\pm 1$-labelled graphs on $n$ vertices in which
  each vertex $u$ is assigned (independently and uniformly at random)
  a label $\sigma_u \in \{1, -1\}$, and then each possible edge
  $\{u, v\}$ is included with probability $q$ if $\sigma_u = \sigma_v$
  and with probability $q'$ if $\sigma_u \ne \sigma_v$.
\end{definition}
If $q = q'$, the stochastic block model is just an
Erd\H{o}s-R\'enyi model, but if $q \gg q'$ then one expects that a typical
graph will have two well-defined clusters.

Theoretical computer scientists' interest
in the average case analysis of the minimum-bisection problem
led to intensive research on algorithms for recovering the partition~\cite{BCLS:87,B:87,DF:89,JS:98,CK:01,M:01}
(although not all of these works used the same model as us; for example,~\cite{BCLS:87} considered random regular graphs with a fixed minimum bisection size). At the same time,  the model is a classical statistical model for networks with communities~\cite{HLL:83}, and the questions of identifiability of the parameters and recovery of the clusters have been studied extensively, see e.g.~\cite{SN:97,BC:09,RCY:11}. We refer the readers to~\cite{MoNeSl:13} for more background on the model.

\subsection{The block models in sparse graphs}
Until recently, all of the theoretical literature on the stochastic block model focused on what we call the dense case, where the average degree is of order at least $\log n$ and the graph is connected. Indeed, it is clear that connectivity is required, if we wish to label all vertices accurately. However, the case of sparse graphs with constant average degree is well motivated from the perspective of real networks, see
e.g.~\cite{LLDM:08,Strogatz:01}.
%Indeed, Leskovec et al.~\cite{LLDM:08}
%collected and studied a vast collection of large network
%datasets, ranging from social networks like LinkedIn
%and MSN Messenger, to collaboration networks in movies
%and on the arXiv, to biological networks in yeast.
%Many of these networks had millions of nodes, but most
%had an average degree of no more than 20; for instance,
%the LinkedIn network they studied had approximately
%seven million nodes, but only 30 million edges.
%Similarly, the real-world networks considered by
%Strogatz~\cite{Strogatz:01} -- which include coauthorship
%networks, power transmission networks and web link networks
%-- also had small average degrees.
%Thus it is natural to consider the
%planted partition model, with parameters $d$ and $q$
%of order $O(1/n)$.

Although sparse graphs are natural for modelling
many large networks, the stochastic block model
seems to be most difficult to analyze in the sparse
setting. Despite the large body of work about
this model, until recently the only result
for the sparse case $q, q' = O(\frac{1}{n})$
was that of Coja-Oghlan~\cite{CO:10}.
Recently, Decelle et al.~\cite{Z:2011} made some fascinating
conjectures for the cluster identification problem
in the sparse stochastic block model. In what follows,
we will set $q = a/n$ and $q' = b/n$ for some fixed $a, b$. It will be useful to parameterize these by $d=(a+b)/2$ and $s=(a-b)/2$.
Note that with these parameters, $s^2 > d$ implies that $s, d > 1$.

\begin{conjecture}\label{conj:reconstruction}
  If $s^2 > d$ then the clustering problem
  in $\calG(n, \frac an, \frac bn)$
  is solvable as $n \to \infty$, in the sense that one can
  a.a.s.\ find a bisection
  whose correlation with the planted bisection is bounded away from  0.
\end{conjecture}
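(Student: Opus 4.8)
The plan is to reduce the clustering problem to estimating, for pairs of vertices, the sign of $\sigma_u\sigma_v$, and to extract that sign from counts of short non-backtracking walks. Fix $\ell$ a little larger than $\log_d n$ (the order of the typical distance between two vertices of $\calG(n,\tfrac an,\tfrac bn)$); let $N_\ell(u,v)$ be the number of length-$\ell$ non-backtracking walks from $u$ to $v$ in $G$ --- members of the family $\NB$, whose self-avoiding elements form $\SAW$ --- and set
\[
  T_\ell(u,v) \;=\; N_\ell(u,v) \;-\; \frac1n\sum_{w\in V} N_\ell(u,w).
\]
Subtracting the average is crucial: the number of length-$\ell$ walks reaching a given vertex is $\approx d^\ell/n$ irrespective of the labelling, and this ``common mode'' would otherwise swamp the (much smaller) community signal. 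I would prove that, for a \emph{typical} vertex $u$ (one whose small-depth neighbourhoods are tree-like and of bounded degree, which holds for all but $o(n)$ vertices) and a uniformly random $v$, the estimator $\widehat\tau(u,v):=\sgn T_\ell(u,v)$ satisfies $\E[\widehat\tau(u,v)\,\sigma_u\sigma_v]\ge\delta$ for an absolute constant $\delta>0$; one then aggregates these pairwise estimates into a balanced bisection correlated with $\sigma$ (e.g.\ by finding a $\pm1$ labelling of $V$ that best agrees with $\widehat\tau$ over the nearby pairs, and symmetrising), a step that requires some care because the estimates are not independent but is otherwise routine.

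The statistic is analysed by its first two moments, conditionally on $\sigma$. For the first moment, the edge indicators are independent given $\sigma$ with $\Pr[\{x,y\}\in G\mid\sigma]=\tfrac dn\bigl(1+\tfrac sd\sigma_x\sigma_y\bigr)$; expanding the product over the edges of a walk and summing over the walk's internal vertices, a parity cancellation (using that $\sigma$ is essentially balanced) kills all but two terms and leaves
\[
  \E[N_\ell(u,v)\mid\sigma]\;\approx\;\frac{d^\ell}{n}\Bigl(1+(s/d)^\ell\,\sigma_u\sigma_v\Bigr),
  \qquad
  \E[T_\ell(u,v)\mid\sigma]\;\approx\;\frac{s^\ell}{n}\,\sigma_u\sigma_v
\]
since $d^\ell(s/d)^\ell=s^\ell$. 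Thus the conditional mean of $T_\ell(u,v)$ has sign exactly $\sigma_u\sigma_v$, and it suffices to show that the ratio $\E[T_\ell(u,v)\mid\sigma]/\sqrt{\Var(T_\ell(u,v)\mid\sigma)}$ stays bounded below by a positive constant --- together with a normal-approximation (central limit) estimate for $T_\ell(u,v)$ --- so that $\sgn T_\ell(u,v)=\sigma_u\sigma_v$ with probability bounded away from $\tfrac12$.

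The crux is the variance. Writing $N_\ell(u,v)^2$ (and $N_\ell(u,v)N_\ell(u,w)$, which enters through the subtracted average) as a sum over pairs of walks, and organising the sum by the overlap graph of the two walks --- in particular by its branch vertices $\Vints$ and its endpoints $\Vends$ --- a pair overlapping in a path of $k\ge1$ edges is seen to contribute of order $(d/s^2)^k$ times the square of the conditional mean; because $s^2>d$ the geometric series $\sum_k(d/s^2)^k$ converges, the surviving fluctuations are governed by the ``bulk'' growth rate $\sqrt d$ rather than by $d$, and the displayed ratio converges to a positive constant exactly when $s^2>d$. The real difficulty is that we are forced to take $\ell\gtrsim\log_d n$ for $N_\ell(u,v)$ to be large enough to carry any signal at all, and at that scale the $\ell$-ball around $u$ already covers most of $G$: it contains short cycles, high-degree vertices, and walks that wind through such tangled regions --- the ``bad'' walks $\badpaths$ --- all of which must be shown to contribute only lower-order terms. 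I would dispose of them by surgery: decompose each walk (and each overlap configuration) into bounded-length ``short'' segments near its endpoints, where tangles are rare and can be enumerated by hand, and a ``long'' bulk segment parameterised by its numbers $\longIn$ and $\longOut$ of within- and between-class edges, organise the statistic as $T_\ell=\sZeta+\lZeta$ accordingly, and bound the bad contribution by a union bound fed by the a.a.s.\ estimates on the number of short cycles and on the maximum degree of $\calG(n,\tfrac an,\tfrac bn)$. This tangle-control in the graph-covering regime is where I expect the bulk of the work to lie; the reduction, the first-moment identity, the central limit step, and the aggregation-and-symmetrisation at the end are comparatively routine.
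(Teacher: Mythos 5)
Your proposal captures the right overall flavor -- non-backtracking walk counts, first and second conditional moments, and taming the rare tangled configurations -- but there are three points at which it departs from the paper's actual argument in ways that I believe are genuine gaps rather than matters of exposition.

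\emph{The length scale $\ell$ is too small.} You fix $\ell$ ``a little larger than $\log_d n$.'' At that scale, $\E[N_\ell(u,v)\mid\sigma]\asymp d^\ell/n$ is $\Theta(1)$ and the signal is $s^\ell/n$, but the diagonal contribution to $\Var(T_\ell\mid\sigma)$ from a single walk cannot be subtracted away: $\sum_\gamma \Var(A_\gamma)\asymp n^{\ell-1}(d/n)^\ell=d^\ell/n$. To have signal dominate noise one needs $s^{2\ell}/n^2\gg d^\ell/n$, i.e.\ $(s^2/d)^\ell\gg n$, i.e.\ $\ell\gtrsim\log n/\log(s^2/d)$. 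This can exceed $\log_d n$ by an arbitrarily large factor as $s^2/d\downarrow 1$ (and in the paper, $k=\lceil\alpha\log n\rceil$ with $\alpha$ chosen so that $n^2 d^{\alpha\log n}\le s^{2\alpha\log n}$). At $\ell\sim\log_d n$ your estimator's signal-to-noise ratio tends to $0$ unless $s^2\gtrsim d^2$ -- far from the conjectured threshold.

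\emph{The centering is at the wrong level.} You subtract the empirical average $\frac1n\sum_w N_\ell(u,w)$, a path-count-level correction. The paper instead centers each edge \emph{before} forming the path weight, setting $W_e=\1_{\{e\in E\}}-d/n$ and summing $X_\gamma=\prod_e W_e$. This is not cosmetic: with the edge-level centering, for a self-avoiding $\gamma$ one has exactly $\E[X_\gamma\mid\sigma]=\sigma_u\sigma_v(s/n)^k$, edge-disjoint paths give $\E[X_{\gamma_1}X_{\gamma_2}\mid\sigma]=\E[X_{\gamma_1}\mid\sigma]\E[X_{\gamma_2}\mid\sigma]$ with no baseline to subtract, and pairs overlapping in $j$ edges contribute a clean factor $(d/s^2)^j$, so that the geometric-series variance bound drops out. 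With the raw count $A_\gamma$ the term $\E[A_{\gamma_1}]\E[A_{\gamma_2}]\asymp(d/n)^{2\ell}$ enormously exceeds the signal and must be cancelled exactly by the subtraction, and making that cancellation survive the sum over overlapping pairs, returning edges and tangles (where $A_\gamma$ is $0/1$ but paths are counted with high multiplicity) is a substantially harder bookkeeping problem than the one the paper solves. In particular, the identity ``overlap of $j$ edges costs $(d/s^2)^j$'' that you invoke is a feature of the centered weights, not of raw walk counts.

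\emph{The aggregation step is not routine; it is where the Kesten--Stigum threshold enters.} Even with the paper's moment bounds, the second moment is only a constant ($\approx 2s^2/(s^2-d)$, which can be large) times the squared first moment, so Chebyshev alone gives $\P[\sgn Y_{u,v}=\sigma_u\sigma_v]$ no useful lower bound; the claim ``$\E[\widehat\tau(u,v)\,\sigma_u\sigma_v]\ge\delta$'' for a fixed $\delta>0$ does not follow and your suggested central-limit step for $T_\ell$ has no obvious route (the summands are highly dependent). The paper does something structurally different: it never tries to decide a sign from a single pair. Instead it fixes a reference set $S_\ast$ (neighbors of a chosen $w^\ast$), forms for each $v$ the sum $Z_v=\sum_{u\in S_\ast,u'\in S_v}N^{(k)}_{u,u'}$ over an annulus $S_v$, and shows via the moment bounds plus Chebyshev that $Z_v$ concentrates around $(s^k/n)M_vM_\ast$ with $M_v=\sum_{u'\in S_v}\sigma_{u'}$, $M_\ast=\sum_{u\in S_\ast}\sigma_u$. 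It then needs a separate branching-process input (the extremality/reconstruction threshold $d<s^2$) to show that $\sgn M_v$ is noisily correlated with $\sigma_v$, together with a randomized-threshold trick. This is a substantive, non-routine ingredient; without it, and given that the pairwise sign estimates are only weakly informative and highly dependent, the ``find a labelling that best agrees with $\widehat\tau$'' reduction is neither routine nor clearly tractable.

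On the positive side: your tangle/surgery outline (decompose walks into short segments near tangled regions and a well-behaved bulk) is genuinely close in spirit to the paper's SAW-decomposition and the conditioning on the $\ell$-tangle-free event, and your diagnosis that this is where the bulk of the work lies is accurate.
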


Decelle et al.'s work is based on deep but non-rigorous ideas
from statistical physics. In order to identify the best bisection,
they use the sum-product algorithm (also known as belief propagation).
Using the cavity method, they argue that the algorithm should work,
a claim that is bolstered by compelling simulation results.
By contrast the best rigorous work by Coja-Oghlan~\cite{CO:10} showed that if $s^2 > C d \log d$ for a large 
constant $C$, then the spectral method solves the clustering problem.
(After the current article first appeared as a preprint, independent
works~\cite{GuedonVershynin:14,YunProutiere:14} gave simple algorithms that work when $s^2 > C d$.)

What makes Conjecture~\ref{conj:reconstruction} even more appealing
is the fact that if it is true, it represents a threshold for the solvability
of the clustering problem. Indeed it was conjectured in~\cite{Z:2011} and proved in~\cite{MoNeSl:13} that
  if $s^2 \leq d$ then the clustering problem
  in $\calG(n, \frac an, \frac bn)$
  problem is not solvable as $n \to \infty$.
It was further shown in~\cite{MoNeSl:13} that $s^2 = d$ represents the threshold for identifiability of the parameters $a$ and $b$
as conjectured by~\cite{Z:2011}.

The threshold $d=s^2$ can be understood both in terms of spin systems and in terms of random matrices.  It was first derived heuristically as a stability condition for the belief propagation algorithm~\cite{Z:2011}.
Around a typical vertex, the joint distribution of the graph labeled by the clusters is asymptotically (in the sense of local weak convergence) a Galton-Watson branching process labeled with the free Ising model.  The threshold $d=s^2$ corresponds to the extremality or reconstruction threshold for the Ising model, the point at which information on the spin at the root can be recovered over arbitrarily long distances. In~\cite{MoNeSl:13} this property was used to show the impossibility of reconstructing clusters when $s^2 \leq d$.

In~\cite{Krzakala_etal:13} the threshold was heuristically derived by considering the spectrum of the matrix of non-backtracking walks.  On an Erd\H{o}s-R\'enyi random graph the bulk spectrum of this matrix has radius $d^{1/2}$.  When $s^2>d$ there is a natural construction of an approximate eigenvector of eigenvalue $s$ which thus escapes from the bulk exactly at $d=s^2$.  The random matrix interpretation plays a central role in our anaylsis and is discussed further in Section~\ref{s:RMT}.

\subsection{Notation}
We write graphs as $G = (V, E)$, where $V$ is a vertex set and
$E$ is the set of edges. We write $v \sim w$ if $\{v, w\} \in E$.
If we need to speak about several graphs at the same time, we may
write $V(G)$ or $E(G)$ in order to be clear that we are referring
to the vertices (or edges) of the graph $G$.
If $\sigma$ is a labelling on $V$ and $U \subset V$, then we
write $\sigma_U$ for the restriction of $\sigma$ to $U$.

In order not to be overwhelmed with quantifiers, 
we make heavy use of the asymptotic notations $o, O, \omega$, and  $\Omega$,
including in the antecedent. For example, the statement that
``$a_n = O(b_n)$ implies that $c_n = O(d_n)$'' means that for every
$C_1 > 0$ there exists some $C_2 > 0$ such that $a_n \le C_1 b_n$
for all $n$ implies that $c_n \le C_2 d_n$ for all $n$.
Given a collection of sequences $(a_{v,n})$ depending on some other
parameter $v$, we say that they satisfy some asymptotics uniformly in $v$
if the hidden constants or rates of convergence do not depend on $v$.
For example, ``$a_{v,n} = o(b_n)$ uniformly in $v$'' means that
there is some sequence $c_n \to 0$ such that $a_{v,n} / b_n \le c_n$
for all $v$ and all $n$.

We write that a sequence of events holds \emph{asymptotically
almost surely} (or a.a.s.) if their probabilities converge to one.

\section{Our results}

Our main result is a proof of Conjecture~\ref{conj:reconstruction}:

\begin{theorem} \label{thm:main}
  If $s^2 > d$ then the clustering problem
  in $\calG(n, \frac an, \frac bn)$
  is solvable as $n \to \infty$, in the sense that one can
  a.a.s.\ find a bisection
  whose correlation with the planted bisection is bounded away from  0.
\end{theorem}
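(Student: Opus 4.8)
The plan is to exploit the random-matrix picture sketched in the introduction: the non-backtracking walk matrix $B$ of $\calG(n,\tfrac an,\tfrac bn)$ has, asymptotically, a bulk spectrum of radius $\sqrt d$, while the planted partition creates an eigenvalue near $s$. When $s^2>d$ this eigenvalue escapes the bulk, so the corresponding eigenvector should be correlated with the partition $\sigma$. The algorithm is then: build $B$ from the observed graph, compute its second eigenvector $v$ (the first being the trivial Perron-type vector associated to the degree/volume), and output the bisection obtained by thresholding the entries of $v$ at (say) the median, after first collapsing $v$ from edges to vertices, e.g.\ $v_u = \sum_{w\sim u} v_{(w\to u)}$.

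I would carry this out in the following steps. First, a \emph{first-moment / local-weak-convergence step}: around a typical vertex the labelled graph looks like a Galton-Watson tree with offspring $\Pois(d)$ decorated by the free Ising model with the appropriate coupling, and one computes that $\E[B^k \1_\sigma]$ grows like $s^k$ in the $\sigma$ direction and like $d^{k/2}$ transversally; this identifies the candidate eigenvalue $s$ and makes precise the heuristic of~\cite{Krzakala_etal:13}. Second, a \emph{bulk-norm step}: show that, after projecting out the one-dimensional deterministic part, $\|B^k\|$ is with high probability $O((\sqrt d)^k \,\mathrm{poly}(k,\log n))$ for $k$ up to some slowly growing $\ell=\ell(n)=\Theta(\log n / \log d)$. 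This is the analogue of controlling the spectral radius of a sparse random matrix and is where self-avoiding and non-backtracking walk enumeration (the $\SAW$, $\NB$, $\sZeta$, $\lZeta$ objects the macros hint at) enters: one expands $B^\ell (B^\ell)^{\!*}$ along pairs of non-backtracking walks, separates short walks (which are tree-like and contribute the main term) from long walks (which must use a cycle and are exponentially rare), and uses a trace / high-moment bound together with a union bound over starting edges. Third, a \emph{combination step}: write $B^\ell = s^\ell \,x y^{\!*} + E$ with $\|E\| = O(d^{\ell/2}\mathrm{poly})$, note $s^\ell \gg d^{\ell/2}$ since $s^2>d$, and conclude by a Davis–Kahan-type argument that the top nontrivial eigenvector of $B^\ell$ — hence an eigenvector we can actually compute — lies within angle $o(1)$... well, within a constant angle, of $y$, which is itself correlated with $\sigma$ by the first-moment computation. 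A final \emph{rounding step} shows the vertex vector $v_u$ is correlated with $\sigma_u$ on a constant fraction of vertices, so median-thresholding yields a bisection with correlation bounded away from $0$; a.a.s.\ then follows from concentration of the relevant quadratic forms.

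The main obstacle is unquestionably the bulk-norm step: controlling $\|B^\ell\|$ on the non-backtracking matrix of a \emph{sparse} random graph with only constant average degree. In this regime $B$ is far from its expectation entrywise, the naive trace bound $\E\operatorname{tr}((BB^{\!*})^\ell)$ is dominated by heavy vertices and short cycles rather than by the ``typical'' contribution, and one cannot simply invoke existing random-matrix concentration results. The fix — which is the technical heart of the argument — is to work on a truncated graph where high-degree vertices and short cycles have been removed or conditioned on, to enumerate walks carefully keeping track of how many times each edge is reused (walks that reuse edges are the ones that blow up), and to show that the ``bad'' walks through the removed structure contribute negligibly; getting the length $\ell$ large enough that $s^\ell$ separates from $d^{\ell/2}\mathrm{poly}(\ell,\log n)$ while still being able to bound the walk combinatorics forces $\ell=\Theta(\log n)$ and a rather delicate error analysis. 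Everything else — the first-moment computation, the perturbation bound, and the rounding — is comparatively routine once this estimate is in hand.
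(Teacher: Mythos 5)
Your proposal follows the spectral route suggested by Krzakala et al.~\cite{Krzakala_etal:13}: form the non-backtracking operator $B$, argue that its bulk spectrum has radius about $\sqrt d$ while the planted partition produces an outlier eigenvalue near $s$, and round the associated eigenvector. This is a genuinely different route from the one taken here; the paper explicitly states that the authors ``were unable to follow the path suggested in~\cite{Krzakala_etal:13} and our algorithm for detection is not a spectral algorithm.'' What the paper does instead is estimate the pairwise products $\sigma_u\sigma_v$ directly via the centred non-backtracking path weight $N^{(k)}_{u,v}=\sum_\gamma X_\gamma$ with $W_e = 1_{\{e\in E(G)\}}-d/n$ (so $\E W_e = 0$); Theorem~\ref{thm:path-weights-main} carries out the first-, second-, and cross-moment computations for the self-avoiding part $Y_{u,v}$ and shows that the non-self-avoiding remainder is negligible on the high-probability tangle-free event. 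These pairwise estimators are then turned into a vertex labelling not by an eigenvector computation but by comparing, through a branching-process coupling (Lemmas~\ref{l:treeCoupling}--\ref{l:compAandB}), the sign of $\sum_{u\in S_\ast,\,u'\in S_v} N^{(k,j)}_{u,u'}$ with the sign of the tree statistic $\Psi_R$. Both approaches do hinge on the same combinatorial core of separating tree-like non-backtracking walks from tangled ones, so your diagnosis of where the difficulty lies is accurate.

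The concrete gap in your outline is the combination step. The non-backtracking matrix $B$ is highly non-normal, and a Davis--Kahan argument does not apply: for non-normal matrices a small operator-norm perturbation of a rank-one matrix need not have an eigenvector close to the unperturbed one, and a bound on $\|B^\ell\|$ alone does not control the eigenvalues or eigenvectors of $B$. Making the spectral picture for $B$ rigorous in the sparse regime was an open problem when this paper appeared; it was resolved only afterwards by Bordenave, Lelarge and Massouli\'e~\cite{BoLeMa:15}, whose proof needed substantially more than a trace estimate plus perturbation theory, in particular a bespoke argument for locating eigenvalues of a non-normal operator from the behaviour of its powers on an almost-invariant subspace. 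Massouli\'e's independent proof~\cite{Massoulie:13} circumvents the non-normality by replacing $B$ with the symmetric matrix of self-avoiding walk counts, at the cost of a dense matrix and a much worse running time. So the statement you are aiming at is true and your heuristic is the right one, but ``conclude by a Davis--Kahan-type argument'' is precisely where the sketch breaks down, and the non-spectral route taken in the paper is designed to sidestep exactly this obstacle.
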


Our algorithm is also computationally efficient, and can be implemented in almost linear time $O(n \log^2 n)$.
We recently learned that Laurent Massouli\'e independently found a different proof of the conjecture~\cite{Massoulie:13}.

We note that our proof of Theorem~\ref{thm:main} actually gives slightly stronger results. First, $a$ and $b$
do not need to be fixed, but may grow slowly with $n$:
\begin{theorem}\label{thm:main-growing}
 If $a, b = n^{o(1/\log \log n)}$ and $s^2 / d \ge \lambda > 1$ for all $n$ then the clustering problem
 in $\calG(n, \frac an, \frac bn)$ is solvable as $n \to \infty$, in the sense of Theorem~\ref{thm:main}.
\end{theorem}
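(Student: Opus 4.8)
The plan is to reduce Theorem~\ref{thm:main-growing} to a spectral statement about non-backtracking walks, following the non-backtracking heuristic of~\cite{Krzakala_etal:13}. Draw $G$ from $\calG(n,\frac an,\frac bn)$, let $B$ be its non-backtracking operator on directed edges, and for vertices $u,v$ write $N_\ell(u,v)$ for the number of non-backtracking walks of length $\ell$ from $u$ to $v$, where $\ell=\ell(n)$ is to be chosen. Expanding $\E[N_\ell(u,v)\mid\sigma]$ as a sum over tuples of distinct intermediate vertices and averaging the product $\prod_i(d+s\sigma_{x_{i-1}}\sigma_{x_i})$ over the random intermediate labels, the only surviving contributions are an ``empty'' term of order $d^\ell/n$ and a ``full'' term $(s^\ell/n)\,\sigma_u\sigma_v$. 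Accordingly, $B$ is expected to have a Perron eigenvalue $\approx d$ that carries no information about $\sigma$ (its eigenvector encodes the local branching geometry; the point of using the non-backtracking operator rather than working in vertex space is that this data-dependent distortion is confined to a single eigenvector instead of spread across the spectrum), a signal eigenvalue $\approx s$ whose eigenvector correlates with $\sigma$ (the correlation bounded away from $0$ but, necessarily in the constant-degree regime, not $1-o(1)$), and all remaining eigenvalues inside the disc of radius $\sqrt d(1+o(1))$. Since $s^2>d$ the signal eigenvalue is isolated, and passing to $B^\ell$ amplifies the first two eigenvalues to $\approx d^\ell$ and $\approx s^\ell$ while keeping the rest inside a disc of radius $d^{\ell/2}\cdot n^{o(1)}$, so that the gap $s^\ell/d^{\ell/2}=(s^2/d)^{\ell/2}$ grows. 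The algorithm is then: estimate $d$ from the average degree, deflate the Perron direction, extract the next eigenvector of $B^\ell$ by $O(\log n)$ steps of power iteration (sparse matrix--vector products; one works with $B^\ell$, or with the self-adjoint Ihara--Bass form, to sidestep the non-normality of $B$), project it to the vertices, round the signs, and rebalance to an equipartition (which shifts the correlation by $o(1)$). A Davis--Kahan argument with the spectral gap $s^\ell-d^{\ell/2}=\omega(d^{\ell/2})$ shows the output is $\Omega(1)$-correlated with $\sigma$.

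\paragraph{Choice of $\ell$, and the hypothesis on $a,b$.} Taking a power $\ell>1$ is what makes the spectral bound manageable: the sharp statement ``$B$ has all but one eigenvalue in the disc of radius $\sqrt d(1+o(1))$'' is delicate, whereas a lossy version --- the deflated length-$\ell$ walk matrix has spectral radius at most $d^{\ell/2}$ times polylogarithmic factors --- should be reachable by a combinatorial argument, and the loss is harmless once $(s^2/d)^{\ell/2}$ dominates it. With $s^2/d\ge\lambda>1$ this needs only $\lambda^{\ell/2}\gg\mathrm{polylog}(n)$, i.e.\ $\ell=\omega(\log\log n)$. Simultaneously we need the depth-$\ell$ neighborhoods of all but $o(n)$ vertices to be trees, which holds when the expected number of vertices within distance $\ell$ of a cycle of length at most $2\ell$ is $o(n)$, i.e.\ when $d^{2\ell}=o(n)$, i.e.\ $\ell\le(\tfrac12-\eps)\log n/\log d$. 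Taking $\ell$ near the top of this window, $\ell\asymp\log n/\log d$, both conditions hold precisely when $\log d=o(\log n/\log\log n)$, i.e.\ when $a,b=n^{o(1/\log\log n)}$; for fixed $a,b$ this is automatic and $\ell=\Theta(\log n)$ works with room to spare. The more routine ingredients are then: (i) coupling the depth-$\ell$ neighborhood of each vertex with a Galton--Watson tree with $\Pois(d)$ offspring carrying an Ising broadcast labelling of edge bias $s/d$, valid a.a.s.\ simultaneously for all but $o(n)$ vertices in this regime, and using it to make rigorous the computation of $\E[B^\ell\mid\sigma]$ and to bound the growth of the walk counts; and (ii) checking that in the tree-like regime non-backtracking and self-avoiding walks of length $\ell$ agree up to a negligible fraction (a non-backtracking walk that is not self-avoiding must traverse a short cycle), so that the walk sum splits into a dominant self-avoiding part and a negligible cyclic remainder --- plausibly the roles played by $\sZeta$ and $\lZeta$ in the proof --- with each error term contributing $o(s^\ell)$ to the operator norm.

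\paragraph{The main difficulty.} The crux is the bound that the deflated length-$\ell$ walk matrix has operator norm $d^{\ell/2}\cdot n^{o(1)}$, equivalently that $B$ has no stray eigenvalue of modulus $\gg\sqrt d$ --- a sparse-random-matrix estimate whose naive obstruction is that $\calG(n,\frac an,\frac bn)$ has vertices of degree $\asymp\log n/\log\log n$, each of which produces, in the \emph{ordinary} adjacency matrix, an eigenvalue $\asymp\sqrt\Delta\gg\sqrt d$; the non-backtracking structure damps these, and quantifying the damping is the real work. I would attempt it by a trace--moment estimate, bounding $\E\,\mathrm{Tr}\bigl((XX^\top)^k\bigr)$ for $X$ the deflated length-$\ell$ matrix and $k$ growing slowly, which expands into a sum over closed walks in $G$ that reuse edges and must be organized by ``excess'' (the number of independent cycles a walk uses): tree-shaped skeletons give the main term of order $d^{\ell k}$, and the combinatorial cost of inserting the excess edges is controlled tightly enough that, after taking $2k$-th roots and optimizing $k$, one obtains $d^{\ell/2}\cdot n^{o(1)}$. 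This has to be preceded by a quarantine step: delete, or set aside and handle separately, a vanishing $n^{o(1)}$-sized set of vertices --- those of high degree and those lying in ``tangles'' (small subgraphs with at least two independent cycles) --- and verify that the deletion moves the signal eigenvector by $o(1)$ in $\ell_2$. (An alternative I would try first, if it fits the time budget, is to prove directly that all but $O(1)$ eigenvalues of $B$ lie in the disc of radius $\sqrt d(1+o(1))$, via an Ihara--Bass passage to a self-adjoint matrix together with a path expansion and tangle removal, which would remove the need for large $\ell$ in the spectral step.) Either way this operator-norm bound is the heart of the matter, and the threshold $s^2=d$ enters precisely as the condition that the planted eigenvalue $s$, hence $s^\ell$, escapes the bulk. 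Finally, all the ``a.a.s.'' and ``$o(\cdot)$'' statements above must be obtained with rates uniform over the allowed range of $a,b$ --- which follows from standard concentration for the degree sequence and for local neighborhood counts of $\calG(n,\frac an,\frac bn)$ --- and the claimed $O(n\log^2 n)$ running time comes from the $\ell$ non-backtracking steps together with $O(\log n)$ power iterations on the $n^{1+o(1)}$-sparse operator $B$.
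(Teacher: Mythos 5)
Your proposal takes a genuinely different route from the paper --- in fact it is precisely the spectral route that the paper's introduction says the authors were \emph{unable} to carry through: ``Unfortunately we were unable to follow the path suggested in~\cite{Krzakala_etal:13} and our algorithm for detection is not a spectral algorithm. Still, our analysis is based on the non-backtracking walk and we show that it can be implemented using matrix powering.'' The matrix powering in Proposition~\ref{prop:efficient} is used purely as a \emph{computational} device to evaluate the vector $N^{(k)}z$ quickly; at no point does the paper prove an eigenvalue separation for $B$, apply Davis--Kahan, or run power iteration to extract an eigenvector. Instead, the paper's algorithm is pointwise: it fixes a small reference set $S_*$ (the neighbors of a mildly high-degree vertex $w^*$), and for each vertex $v$ it sums the weighted non-backtracking walk counts $N^{(k,j)}_{u,u'}$ from $S_*$ to the sphere $S_v$ (computed in a randomly thinned copy of the graph so that the reference set and $B_R(v)$ are absent), adds an independent uniform tiebreaker $\xi_{j,v}$ scaled to the expected signal size, and takes the sign. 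Correctness is proven from the moment estimates for weighted self-avoiding walks (Theorem~\ref{thm:path-weights-main}, proved via the SAW-decomposition and tangle-free conditioning of Sections 5--6) together with Galton--Watson tree couplings and a branching-process argument (Lemmas~\ref{l:treeCoupling}--\ref{l:compAandAtilde}) showing $\sgn(M_v + \kappa s^R\xi)$ correlates with $\sigma_v$. No operator norm bound is needed anywhere.

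The gap in your plan is exactly the step you flag as ``the heart of the matter'': the operator-norm bound $\|B^\ell_{\text{deflated}}\|\le d^{\ell/2}n^{o(1)}$, or equivalently the confinement of all but $O(1)$ eigenvalues of $B$ to the disc of radius $\sqrt d(1+o(1))$. This is not a lemma one can reasonably defer; it is the main theorem of Bordenave--Lelarge--Massouli\'e~\cite{BoLeMa:15}, a substantial paper that appeared \emph{after} the present one and required, in the paper's own words, ``new developments in random matrix theory, partly because the non-backtracking matrix is very sparse and partly because its entries are far from i.i.d.'' Your ``lossy version'' does not obviously help: you still must make sense of deflation and of the spectral radius of a deflated non-normal operator, and your trace-moment expansion of $\mathrm{Tr}((XX^\top)^k)$ runs into exactly the combinatorial difficulty (non-i.i.d.\ entries, sparse rows, walks that re-use edges) whose resolution constitutes the BLM paper. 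Likewise, Davis--Kahan in the form you invoke requires (approximate) normality, and passing to the Ihara--Bass determinant does not automatically give a self-adjoint operator whose eigenvectors project onto the planted labelling. Your identification of the parameter window $\log\log n\ll\ell\ll\log n/\log d$ and its equivalence to $a,b=n^{o(1/\log\log n)}$ is correct and does mirror the paper's Assumption~\ref{ass} (though there the corresponding roles are split between the tangle radius $\ell\ll\sqrt{\log n}$ and the walk length $k=\Theta(\log n)$), and item (i) of your ``routine ingredients'' corresponds to Lemma~\ref{l:treeCoupling}; but item (ii) (the contribution of non-self-avoiding non-backtracking walks) is the content of the paper's entire Section 6 and is not routine, and the notation $\sZeta,\lZeta$ you guess at actually denotes short versus long segments of a SAW-decomposition used inside the tangle-free conditioning, not a self-avoiding/cyclic split of the walk sum. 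In short, your route would yield a (beautiful) alternative proof \emph{if} the operator-norm bound were available, but that bound is precisely what the paper's approach was designed to avoid.
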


Moreover, although our proof of Theorem~\ref{thm:main} does not give particularly good bounds
for the size of the correlation, it does show that the correlation tends to 1 as $s^2/d$ grows.
\begin{theorem}\label{thm:main-accurate}
 If $a, b = n^{o(1/\log \log n)}$ and $s^2 / d \to \infty$ then the clustering problem
 in $\calG(n, \frac an, \frac bn)$ is solvable as $n \to \infty$, in the sense that one can
 a.a.s.\ find a bisection that agrees with the planted bisection up to an error of $o(n)$ vertices.
\end{theorem}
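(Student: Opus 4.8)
The plan is to go back through the proof of Theorem~\ref{thm:main}, which already produces a labelling $\hat\sigma \in \{\pm 1\}^V$ correlated with $\sigma$, and to make its error bound quantitative in the ratio $\lambda := s^2/d$. Recall that $\hat\sigma_u = \sgn(Y_u)$, where $Y_u$ is a weighted count of non-backtracking walks of a slowly growing length $\ell = \ell(n)$ starting at $u$, and that the proof shows $\frac1n \sum_u \hat\sigma_u \sigma_u \ge c$ a.a.s.\ for some $c = c(\lambda) > 0$. To obtain Theorem~\ref{thm:main-accurate} it suffices to prove the stronger statement that the \emph{per-vertex} misclassification probability $\Pr[\hat\sigma_u \ne \sigma_u]$ (for a uniformly random $u$) is at most some $\delta(\lambda)$ with $\delta(\lambda) \to 0$ as $\lambda \to \infty$: then $\E\#\{u : \hat\sigma_u \ne \sigma_u\} \le \delta(\lambda) n$, so by Markov's inequality $\#\{u : \hat\sigma_u \ne \sigma_u\} = o(n)$ a.a.s.\ along the sequence in which $\lambda = \lambda_n \to \infty$; and a final step that flips at most $o(n)$ labels to equalize the two class sizes turns $\hat\sigma$ into an honest bisection while keeping the total error $o(n)$.

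The key observation driving the per-vertex bound is that the hypothesis $\lambda \to \infty$ is much stronger than it looks: since $b \ge 0$ we always have $s \le d$, hence $\lambda = s^2/d \le s$, so $\lambda \to \infty$ forces $s \to \infty$ and therefore $d \ge s \to \infty$. Now fix a vertex $u$ whose depth-$\ell$ neighborhood is a tree with at least $\frac12 d^\ell$ vertices at depth $\ell$; by the first- and second-moment estimates already in the proof of Theorem~\ref{thm:main} (which use $a,b = n^{o(1/\log\log n)}$ to control cycle counts), the fraction of vertices failing this is $\eta(d) + o(1)$, where $\eta(d)$ is the extinction probability of a mean-$d$ Galton--Watson tree. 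Since $d \to \infty$, both $\eta(d) \to 0$ and $\Var(W)/(\E W)^2 \to 0$ for the corresponding martingale limit $W$, so this fraction is $o_\lambda(1)$. Conditioned on such a good neighborhood and on $\sigma_u$, the statistic $Y_u$ is, to leading order, Gaussian with mean of order $\sigma_u s^\ell$ and standard deviation of order $d^{\ell/2}$; its signal-to-noise ratio is thus $\Omega(\lambda^{\ell/2})$, and $\Pr[\sgn(Y_u) \ne \sigma_u \mid \text{good neighborhood}] \le \exp(-\Omega(\lambda^{c\ell}))$ for some constant $c > 0$. Since $\ell \to \infty$ this is $o_\lambda(1)$ as well, and summing the two contributions gives $\delta(\lambda) = o_\lambda(1)$, as needed.

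The bulk of the work, and the main obstacle, is to make every estimate in the proof of Theorem~\ref{thm:main} uniform as $\lambda$, $a$, and $b$ all grow with $n$, and explicit enough in $\lambda$ to yield the two bounds above. Concretely, one must re-examine the corrections by which $Y_u$ differs from the idealized census statistic on the broadcast tree --- the contributions of short cycles and other non-tree-like structures, and the truncation errors in passing between non-backtracking and self-avoiding walks --- and check that each remains of smaller order than the signal $s^\ell$ when $\ell$ grows and $d \to \infty$, rather than merely for fixed parameters. Likewise the Gaussian approximation for $Y_u$ must be quantitative, say via a Berry--Esseen-type bound for the martingale underlying the walk count, so that the exponentially small tail estimate is legitimate. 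Once this $\lambda$-explicit version of the error analysis is in place, Theorem~\ref{thm:main-accurate} follows from the second-moment argument of the first paragraph.
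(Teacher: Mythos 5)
Your high-level strategy — show that the per-vertex misclassification probability tends to zero and then apply Markov's inequality, followed by a trivial rebalancing to produce an exact bisection — is in the right spirit and matches what the paper does implicitly. But the route you take, via branching processes, extinction probabilities, and a Gaussian approximation, is both more complicated than what the paper does and contains a serious quantitative error.

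The paper's own proof is much shorter: it runs the main algorithm with $R=0$ and $\kappa=0$ (so $S_v = \{v\}$ and the dithering noise is switched off), and then appeals directly to Theorem~\ref{thm:path-weights-main}. The first and second moment estimates there show that when $s^2/d\to\infty$, the variance of $Y_{u,v}$ is small relative to $(\E Y_{u,v})^2$, so $N^{(k,j)}_{u,v} = (1+o(1))\sigma_u\sigma_v s^k/n$ with probability $\to 1$; together with Lemma~\ref{l:JvBound} this gives $\P[\tau_v = \sigma_v] \to 1$. The paper explicitly remarks that this case ``doesn't require the branching process tools.'' Your proposal instead reintroduces all of that machinery, and on top of it a Berry--Esseen-type bound for the walk statistic which is nowhere established in the paper and would be genuinely new work.

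More importantly, your signal-to-noise estimate is wrong. You write that conditioned on a good neighborhood, $Y_u$ has ``mean of order $\sigma_u s^\ell$ and standard deviation of order $d^{\ell/2}$,'' giving an SNR of $\Omega(\lambda^{\ell/2})$ and hence an $\exp(-\Omega(\lambda^{c\ell}))$ misclassification bound. The figure $d^{\ell/2}$ for the standard deviation of the depth-$\ell$ census is the \emph{subcritical} ($s^2<d$) scaling. In the supercritical regime $s^2 > d$ the Kesten--Stigum martingale $M_\ell = s^{-\ell}\sum_{|w|=\ell}\sigma_w$ has $\Var M_\ell \to d/(s^2-d)$, so the standard deviation of the census is $\Theta\bigl(s^\ell\sqrt{d/(s^2-d)}\bigr)$, not $\Theta(d^{\ell/2})$. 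The variance is dominated by pairs of descendants that split off near the root, not near the leaves. Consequently the SNR is $\Theta(\sqrt{\lambda-1})$, independent of $\ell$; there is no exponential-in-$\ell$ gain. The conclusion $\delta(\lambda)\to0$ still survives (e.g.\ by Chebyshev, which is what the paper's moment bounds actually support), but the mechanism and the quantitative claim in your outline are incorrect, and the whole plan of establishing a Gaussian tail of the form $\exp(-\Omega(\lambda^{c\ell}))$ would fail.

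Finally, the ``bulk of the work'' you identify — making the cycle and truncation estimates uniform as $a,b$ grow — is already done in the paper: Theorem~\ref{thm:path-weights-main} is proved under Assumption~\ref{ass} ($s,d = n^{o(1/\log\log n)}$), with uniform error terms. Your proposal treats this as an open obstacle rather than recognizing that the result you need is precisely what the already-proved moment theorem provides; that is what makes the paper's proof a two-line consequence while yours remains a program.
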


\subsection{Proof strategy}

It was conjectured in~\cite{Z:2011} that a popular algorithm, belief propagation initialized with i.i.d. uniform messages, detects communities all the way to the threshold. However, analysis of belief propagation with random initial messages is a difficult task.
Krzakala et al.~\cite{Krzakala_etal:13} argued that a novel and very efficient spectral algorithm based on a non-backtracking matrix should also detect communities all the way to the threshold.  Unfortunately we were unable to follow the path suggested in~\cite{Krzakala_etal:13} and our algorithm for detection is not a spectral algorithm. Still, our analysis is based on the non-backtracking walk and we show that it can be implemented using matrix powering. The algorithm has very good theoretical running time $O(n \log^2 n)$ but the constant in the $O$ needed for the proof that the algorithm is correct is very large, so the algorithm described is not nearly as efficient as the one in~\cite{Krzakala_etal:13} (nor have we implemented it).

\begin{definition}
  A \emph{path} is a sequence $u_0, \dots, u_k$ of vertices such that
  for all $i$, $u_i \ne u_{i-1}$. (Note
  that we do not require vertices in a path to be connected
  by an edge in any given graph; thus, it might be more standard
  -- but also rather longer --
  to use the term \emph{path in the complete graph} instead.)
  We write $E(\gamma)$ for the set of $\{u_{i-1}, u_i\}$ and
  $V(\gamma)$ for the set $\{u_0, \dots, u_k\}$.

  A \emph{non-backtracking path} is a path
  $u_0, \dots, u_k$ such that for every
  $0 \le i \le k-2$, $u_{i} \ne u_{i+2}$.

  A \emph{self-avoiding path} is a sequence of vertices
  $u_0, \dots, u_k$ that are all distinct.
\end{definition}

The basic intuition behind the proof is that we should be expecting a larger number of non-backtracking paths of a given length $k$ in the graph between two vertices $u$ and $v$ if they have the same label, while a smaller number of non-backtracking paths in the graph is expected if the nodes $u$ and $v$ have different labels.

Instead of working with the number of non-backtracking walks in the graph, it is more convenient to work with a rank one correction, where an edge is
represented by $1-d/n$ and a non-edge by $-d/n$.  With this alteration the expected weight of each edge is $0$.

\begin{definition}
  Let $W_e = 1_{\{e \in E(G)\}} - d/n$.
For a non-backtracking path $\gamma=u_0,\ldots,u_k$, let
\[
X_\gamma = \prod_{i=1}^k W_{(u_{i-1}, u_i)}.
\]
Let $\NB_{k,u,v}$ denote the set of non-backtracking paths of length $k$ from $u$ to $u'$ and let
\[
  N_{u,v}^{(k)} = \sum_{\gamma\in \NB_{k,u,v}} X_\gamma.
\]
Where $k$ is clear from the context, we will sometimes just write
$N_{u,v}$.
\end{definition}

Our basic method is to show that $N_{u,v}^{(k)}$ is correlated
with $\sigma_u \sigma_v$ for some $k \sim \log n$.
In order to do this, we would like
to compute the expectation and variance of the $N_{u,v}$.
There is an obstacle, however, which is that on some very rare event there are many more
paths in the graph than there should be; this event throws off
the expectation and variance of $N_{u,v}$.
For intuition, take $k = \lceil \alpha \log n \rceil$
for some large constant $\alpha$ (in order to make our method work,
we will need $\alpha$ arbitrarily large if $s^2$ is arbitrarily close to $d$).
As we will show later, $N_{u,v}$ is of the order $s^k/n$ with
high probability. However, the expectation of $N_{u,v}$ could
be much larger.
Indeed, the probability that an $m$-clique containing $u$ and $v$ will appear
is at least $n^{-m^2} = e^{-m^2 \log n}$.
On the event of its appearance,
there are at least $(m-2)^k = e^{(m-2) \alpha \log n}$ non-backtracking
paths of length $k$ from $u$ to $v$ that stay entirely within the graph;
each of these paths $\gamma$ has $X_\gamma \approx 1$.
If $\log (m-2) \geq 2 \log s$ and $\alpha \log (m-2) > 2m^2$
(which can be achieved by first taking $m$ large enough depending on $s$
and then taking $\alpha$ large enough depending on $m$),
then these paths contribute an expected weight of at least
\[
e^{\alpha \log (m-2) \log n - m^2 \log n} \ge e^{\frac 12 \alpha \log(m-2) \log n}
\ge s^{\alpha \log n} = s^{k},
\]
which is of a larger order than $s^k/n$.
For this reason, our argument for controlling $N_{u,v}$ will be
divided into two parts: we will use the second moment method to control
the part of $N_{u,v}$ that involves ``nice'' paths, and we will
control the other paths by conditioning
on an event that excludes cliques, along with some other problematic structures which we call tangles. 

\begin{definition}
  An \emph{$\ell$-tangle} is a graph of diameter at most $2\ell$ that
  contains two cycles. We say that a graph is \emph{$\ell$-tangle-free} if
  if does not contain any $\ell$-tangles as subgraphs;
  i.e., every neighborhood of
  radius $\ell$ in the graph has at most one cycle.
\end{definition}

Standard random graph arguments imply that sparse graphs are $\ell$-tangle free with high probability,
for some $\ell$ depending on the sparsity. Indeed, we will show later
(Lemma~\ref{lem:xi-small}) that with the following choice of
parameters
(which we fix for the rest of this article),
$G$ is $\ell$-tangle-free with probability $1 - n^{-1 + o(1)}$:

\begin{assumption}\label{ass}
 Assume that $s, d = n^{o(1/\log \log n)}$, and fix a sequence $\ell = \ell_n$
 satisfying $\log \log n \ll \ell \ll \sqrt{\log n}$.
\end{assumption}

Roughly speaking, our main technical result is that
$N_{u,v}$ is correlated with $\sigma_u \sigma_v$, and
that as $u$ and $v$ vary then the variables $N_{u,v}$ are
essentially uncorrelated.

\begin{theorem}\label{thm:path-weights-main}
  Assuming that $s^2/d \ge \lambda > 1$, choose $\alpha$ so that
  $n^2 d^{\alpha \log n} \le s^{2\alpha \log n}$ for every $n$.
  Let $u, v, u', v'$ be distinct vertices, and let
  $\SAW_{u,v}$ be the set of self-avoiding paths from 
  $u$ to $v$ of length $k = \lceil \alpha \log n \rceil$.
  Suppose that $U, U' \subset V$ contain $\{u, v\}$ and $\{u', v'\}$
  respectively, and that both have cardinality at most $n^{o(1)}$.
  Let $Y_{u,v} = \sum_{\gamma \in \SAW_{u,v}} X_\gamma$.
  Then, uniformly over $u, v, u', v'$, and all labellings
  $\sigma_U, \sigma_{U'}$ on $U$ and $U'$,
  \begin{align}
    \label{eq:thm-first-moment}
    \E \left[ Y_{u,v} \mid \sigma_U \right]
    &= (1 + n^{-1+o(1)}) \frac{\sigma_u \sigma_v s^k}{n} \\
    \label{eq:thm-second-moment}
    \E \left[Y_{u,v}^2 \mid \sigma_U \right] 
    &\le (1 + o(1)) 2\left(\frac{s^2}{s^2-d}\right) \frac{s^{2k}}{n^2} \\
    \label{eq:thm-cross-moments}
    \E \left[Y_{u,v} Y_{u',v'} \mid \sigma_U \sigma_{U'} \right]
    &= (1 + n^{-1 + o(1)}) \E [Y_{u,v} \mid \sigma_U, \sigma_{U'}] \E [Y_{u',v'} \mid \sigma_U, \sigma_{U'}] \\
    \label{eq:thm-bad-paths}
\P \left[ |N^{(k)}_{u,v} - Y_{u,v}| \ge s^k n^{-4/3} \mid \sigma_U \right]
&\le n^{-1/3 + o(1)}.
  \end{align}
\end{theorem}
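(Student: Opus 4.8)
The plan is to treat the four assertions as a single package built around a first- and second-moment computation for sums over self-avoiding paths, followed by a comparison between the self-avoiding sum $Y_{u,v}$ and the non-backtracking sum $N^{(k)}_{u,v}$.

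First I would establish \eqref{eq:thm-first-moment}. Since the weights $W_e = \1_{\{e\in E(G)\}} - d/n$ are independent across edges, $\E[X_\gamma \mid \sigma_U]$ factors over the edges of $\gamma$, and a single edge $\{x,y\}$ contributes $\E[W_{xy}\mid \sigma_x,\sigma_y] = (s/n)\sigma_x\sigma_y$ if $x,y\in U$ and $(s/n)\,\E[\sigma_x\sigma_y]$-type terms otherwise; because $\gamma$ is self-avoiding, along its interior the telescoping product of the $\sigma$'s collapses to $\sigma_u\sigma_v$. Thus $\E[X_\gamma\mid\sigma_U] = \sigma_u\sigma_v (s/n)^k$ for every self-avoiding $\gamma$, and $|\SAW_{u,v}| = (1+n^{-1+o(1)}) n^{k-1}$ by a standard count (the total number of length-$k$ vertex sequences starting at $u$, ending at $v$, all distinct, is $n^{k-1}$ up to lower-order corrections since $k = O(\log n)$). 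Multiplying gives the claimed $(1+n^{-1+o(1)})\sigma_u\sigma_v s^k/n$.

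Next I would prove the second-moment bound \eqref{eq:thm-second-moment}, which is the technical heart. Expanding $Y_{u,v}^2 = \sum_{\gamma,\gamma'} X_\gamma X_{\gamma'}$ and using edge-independence, the expectation of a pair $(\gamma,\gamma')$ depends only on the overlap graph $\gamma\cup\gamma'$: an edge used by exactly one of the two paths contributes a factor $s/n$, while an edge used by both contributes $\E[W_e^2] = (d/n)(1-d/n) \approx d/n$ (plus the $\sigma$-dependent lower-order mean-squared term). So I would organize the sum according to the set of shared edges, which for two self-avoiding paths forms a union of sub-paths; the dominant contribution comes from pairs sharing a single initial (or terminal) segment, and summing the resulting geometric-type series in the shared-segment length produces the factor $s^2/(s^2-d)$, which converges precisely because $s^2/d \ge \lambda > 1$. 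Pairs with more complicated intersection patterns — sharing several disjoint segments, or segments in the ``middle'' — are lower order, and here is where I expect the main obstacle: bounding these configurations requires a careful combinatorial enumeration of overlap topologies together with the observation that each extra shared segment or each ``excursion'' costs a factor of order $d/s^2 < 1$ or a factor of $n^{-1}$ from the reduced number of free vertices. I would set up a bookkeeping scheme indexed by the number and type of shared segments and bound each class, being careful that the constants are uniform over $u,v$ and over the conditioning $\sigma_U$. The factor of $2$ in the statement comes from the two symmetric ways of attaching the shared segment (at $u$'s end or at $v$'s end).

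For the cross-moment \eqref{eq:thm-cross-moments} I would run the same expansion for $\E[Y_{u,v}Y_{u',v'}\mid\sigma_U,\sigma_{U'}]$: since $u,v,u',v'$ are distinct, a self-avoiding path from $u$ to $v$ and one from $u'$ to $v'$ that share any edge must, because of the distinct endpoints, ``waste'' vertices, so every intersecting pair is down by a factor $n^{-1+o(1)}$ relative to the disjoint pairs, while the disjoint pairs factor exactly into $\E[Y_{u,v}\mid\cdot]\E[Y_{u',v'}\mid\cdot]$ by independence. Finally, for \eqref{eq:thm-bad-paths} I would note that $N^{(k)}_{u,v} - Y_{u,v} = \sum_\gamma X_\gamma$ over non-backtracking but non-self-avoiding $\gamma$; such a $\gamma$ contains a cycle within a bounded-diameter neighborhood, so on the $\ell$-tangle-free event (which by Lemma~\ref{lem:xi-small}, invoked here, holds with probability $1-n^{-1+o(1)}$) its vertex set spans more edges than a tree, forcing at least one repeated edge or an extra $W$-factor, and a first-moment (Markov) bound on $\sum_{\text{non-SAW }\gamma}|X_\gamma|$ — using $\E|W_e| \le 2d/n$ and the fact that non-self-avoiding non-backtracking paths in a tangle-free graph are few — gives that the difference is at most $s^k n^{-4/3}$ except with probability $n^{-1/3+o(1)}$. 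I would double-check that the exponents $-4/3$ and $-1/3$ survive the tangle-free bookkeeping; if the naive bound is too weak I would instead bound a suitable moment of the non-SAW sum conditioned on tangle-freeness and apply Chebyshev.
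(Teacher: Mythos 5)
Your plan for \eqref{eq:thm-first-moment} and \eqref{eq:thm-cross-moments} matches the paper (Lemma~\ref{lem:path} plus a count of self-avoiding paths, respectively a split into disjoint and overlapping pairs), and the overall structure of the second-moment argument is also aligned: the paper turns a pair $(\gamma_1,\gamma_2)$ into a single closed walk $\gamma$ and stratifies by $k_r(\gamma)$, using Lemmas~\ref{lem:constant-returns} and~\ref{lem:two-saw-paths} to dispose of the tail, while the main contribution comes from walks consisting of two shared ``tails'' and a middle cycle. One bookkeeping remark: the paper sums $\sum_{i,j\ge 0}(d/s^2)^{i+j}$, allowing sharing at \emph{both} ends simultaneously (yielding $(s^2/(s^2-d))^2$), whereas your accounting allows one end at a time and attributes the factor $2$ to the choice of end; these are different parameterizations of the same set, and yours as written undercounts the configurations with $i,j>0$, even if the ultimate constant in \eqref{eq:thm-second-moment} only needs to be bounded.

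The genuine gap is in \eqref{eq:thm-bad-paths}. Your primary plan --- a first-moment (Markov) bound on $\sum_{\text{non-SAW}}|X_\gamma|$ on the tangle-free event --- cannot reach the stated exponent. The dominant bad paths are non-backtracking with exactly one returning edge (a single cycle attached by tails); these have $t(\gamma)=0$ tangles, so the tangle-free event imposes no constraint on their weight at all, and there are $\Theta(n^{k-2})$ of them with $\E|X_\gamma| = \Theta((d/n)^{k})$, giving a contribution of order $d^k/n^2$. For Markov to yield failure probability $n^{-1/3+o(1)}$ at level $s^k n^{-4/3}$ you would need the absolute first moment to be $\lesssim s^k n^{-5/3}$, but $d^k/n^2$ is only guaranteed to be $\lesssim s^{2k}/n^4$ by the choice of $\alpha$, and $\sqrt{s^{2k}/n^4}\cdot n^{1/2}\sim s^k n^{-3/2}$, not $n^{-5/3}$; the triangle inequality throws away exactly the sign cancellation that makes the sum small. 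Your fallback --- bound $\E[1_\Xi(\sum X_\gamma)^2]$ and apply Chebyshev --- is what the paper actually does (Lemma~\ref{lem:tangled-weights} with $\P[\Xi]\ge 1 - n^{-1+o(1)}$ from Lemma~\ref{lem:xi-small}), and the exponents $s^{2k}n^{-3+o(1)}$ and $n^{-1/3+o(1)}$ come out only from the second-moment route. A second issue in your sketch is that ``non-self-avoiding non-backtracking paths in a tangle-free graph are few'' conflates two things: the sum $N^{(k)}_{u,v}-Y_{u,v}$ ranges over paths in the \emph{complete} graph, and tangle-freeness of $G$ neither reduces that set nor directly forces extra $W$-factors on paths whose own multigraph has at most one cycle. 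The mechanism the paper uses is the cover $\Xi\subset\Omega_{\calF(\gamma)}$ of Lemma~\ref{lem:cover-xi}, which converts ``$G$ is tangle-free'' into ``for each $\gamma$, some set of $\gamma$'s short-segment edges is banned,'' and then Lemma~\ref{lem:one-tangled-path} turns those banned edges into a factor $n^{-t}$; combined with the path count of Lemma~\ref{lem:few-tangles}, which controls the number of paths with $t$ $\ell$-tangles by $k^{O(t)}n^{k_n-1+o(1)}$, this gives the required cancellation between entropy and weight. That cover-and-count machinery is the crux of \eqref{eq:thm-bad-paths} and is not present in your sketch.
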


It follows easily from Theorem~\ref{thm:path-weights-main} that if $s^2/d \to \infty$
then we can get very accurate estimates of $\sigma_u \sigma_v$ by
computing $N_{u,v}^{(k)}$. To achieve
non-trivial estimates of $\sigma_u \sigma_v$ in the case
$s^2/d > 1$ is more complicated. We will explain the procedure roughly
in the next section.

\subsection{Almost linear time algorithm}

Theorem~\ref{thm:path-weights-main} suggests a natural way to check if two vertices are in the same cluster;
this is the basis of the algorithm we develop to cluster the graph. We further show how to efficiently perform the algorithm using matrix powering.

\begin{theorem} \label{thm:alg}
  There is an algorithm that runs in time $O(nd\log^2 n)$ and satisfies
  the following guarantee:
  for any $\lambda > 1$ there is an $\epsilon > 0$ such that
  if $s^2 / d \ge \lambda > 1$ for all $n$ then the algorithm,
  given $G \sim \calG(n, \frac an, \frac bn)$,
  produces a labelling $\tau$ satisfying
  \[
    \left|\frac 1n \sum_v \sigma_v \tau_v \right| \ge \epsilon > 0,
  \]
  with probability $1-o(1)$,
  where $\sigma$ is the true labelling of $G$.
\end{theorem}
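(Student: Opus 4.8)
\emph{Overview and setup.} The plan is to convert the pairwise statistics of Theorem~\ref{thm:path-weights-main} into a global labelling by a ``seed and propagate'' scheme, and to organise the computation as matrix powering. Fix $\lambda>1$; the algorithm is given the parameters $a,b$ (equivalently $s,d$) and uses them to set the constant $\alpha=\alpha(\lambda)$ and $k=\lceil\alpha\log n\rceil$ of Theorem~\ref{thm:path-weights-main} together with a further constant $m=m(\lambda)$, to be taken large at the end. Pick a set $A\subset V$ with $|A|=m$ (uniformly at random, say) and, for every vertex $u$, form the vector $z_u=(N^{(k)}_{u,v})_{v\in A}\in\R^m$. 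Combining \eqref{eq:thm-first-moment} and \eqref{eq:thm-bad-paths}, for all but an $n^{-\Omega(1)}$ fraction of $u$ we have $z_u=\frac{s^k}{n}\sigma_u\,\sigma_A+\xi_u$ where $\sigma_A:=(\sigma_v)_{v\in A}$, and by \eqref{eq:thm-second-moment}--\eqref{eq:thm-cross-moments} the conditional (on $\sigma$) covariance of $\xi_u$ is within $o(1)$ of $(2\tfrac{s^2}{s^2-d}-1)\tfrac{s^{2k}}{n^2}I_m$ while $\xi_u$ and $\xi_{u'}$ are essentially uncorrelated for $u\ne u'$. Thus $\{z_u\}_{u\in V}$ behaves like a sample from a balanced two-component mixture on $\R^m$ with centres $\pm\tfrac{s^k}{n}\sigma_A$; since $s^2/d\ge\lambda$ forces $2\tfrac{s^2}{s^2-d}-1\le\tfrac{\lambda+1}{\lambda-1}$, the squared separation-to-noise ratio of this mixture in the direction $\sigma_A$ is $\ge\tfrac{m(\lambda-1)}{\lambda+1}$, which we can make as large as we like by enlarging $m$.

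\emph{Seeding.} To recover $\sigma_A$ (up to a global sign) I would form the $m\times m$ Gram-type matrix $M^\ast$ with $M^\ast_{v,v'}=\sum_{u\in V}N^{(k)}_{u,v}N^{(k)}_{u,v'}$ and take $\widehat\sigma_A=\arg\max_{\tau\in\{\pm1\}^A}\tau^\top M^\ast\tau$ --- a brute-force maximisation over $2^m=O(1)$ sign patterns, so no spectral computation is needed. By the moment bounds, $\tfrac1n M^\ast$ is, up to lower-order fluctuations, $\tfrac{s^{2k}}{n^2}\big(\sigma_A\sigma_A^\top+(2\tfrac{s^2}{s^2-d}-1)I_m\big)$; the first term is rank one and positive semidefinite and the second is a fixed multiple of the identity (hence constant on $\{\pm1\}^A$), so the maximiser of $\tau^\top M^\ast\tau$ over $\{\pm1\}^A$ is $\pm\sigma_A$, with a gap of order $\tfrac{s^{2k}}{n}$ to the next-best sign pattern. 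Provided $M^\ast$ concentrates around its mean to within this gap, $\widehat\sigma_A=\pm\sigma_A$ with probability $1-o(1)$.

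\emph{Propagation and conclusion.} On the event $\widehat\sigma_A=\pm\sigma_A$, output $\widehat\sigma_u=\sgn\big(\sum_{v\in A}\widehat\sigma_v N^{(k)}_{u,v}\big)=\sgn\langle z_u,\pm\sigma_A\rangle$ for all $u$. For a typical $u$, $\langle z_u,\sigma_A\rangle$ has conditional mean $(1\pm o(1))m\tfrac{s^k}{n}\sigma_u$ and conditional standard deviation at most $(1+o(1))\sqrt{m}\sqrt{2\tfrac{s^2}{s^2-d}-1}\,\tfrac{s^k}{n}$, so Chebyshev's inequality gives $\P[\widehat\sigma_u\ne\pm\sigma_u]\le\tfrac{\lambda+1}{m(\lambda-1)}+o(1)$; choosing $m\ge 9\tfrac{\lambda+1}{\lambda-1}$ makes this at most $\tfrac19+o(1)$. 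The exceptional vertices --- those $u$ for which too many pairs $(u,v)$ with $v\in A$ fall into the bad event of \eqref{eq:thm-bad-paths} --- number $o(n)$ by Markov's inequality, so $\E[\sigma_u\widehat\sigma_u]\ge\tfrac79-o(1)$ for the bulk of vertices, and one more Chebyshev bound --- using \eqref{eq:thm-cross-moments} to see that the terms $\sigma_u\widehat\sigma_u$ have negligible pairwise correlation --- shows $\big|\tfrac1n\sum_u\sigma_u\widehat\sigma_u\big|\ge\epsilon$ for a suitable $\epsilon=\epsilon(\lambda)>0$ with probability $1-o(1)$, after fixing the global sign.

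\emph{Running time and the main obstacle.} Since $N^{(k)}_{u,v}$ is the sum of $X_\gamma$ over non-backtracking paths in the complete graph, the $|A|=O(1)$ vectors $(N^{(k)}_{u,v})_{u\in V}$, $v\in A$, are coordinates of $\mathcal B^k e_v$ for the non-backtracking operator $\mathcal B$ on directed pairs weighted by $W_e=1_{\{e\in E\}}-d/n$. Although $\mathcal B$ is not sparse, it is the sum of the sparse non-backtracking operator of $G$ and a correction assembled from $-d/n$ times simple summation operators, so one application of $\mathcal B$ costs $O(nd)$; iterating $k=O(\log n)$ times for each source (and invoking the $\ell$-tangle-free property from Lemma~\ref{lem:xi-small} to keep the book-keeping under control) yields total running time $O(nd\log^2 n)$. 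The hard part of the analysis is the concentration required for the seeding step: Theorem~\ref{thm:path-weights-main} gives only first and second (cross-)moments, so to control $\tfrac1n M^\ast$ one must prove fourth-moment estimates of the form $\E\big[(N^{(k)}_{u,v})^4\big]=O\big((\E Y_{u,v}^2)^2\big)$ together with a fourth-order analogue of \eqref{eq:thm-cross-moments} --- both following from the same self-avoiding-walk pairing argument that underlies Theorem~\ref{thm:path-weights-main} --- and one must carefully decouple the seed estimate $\widehat\sigma_A$ from the individual vectors $z_u$ being classified, e.g.\ by a sample-splitting argument or by using the near-uncorrelatedness in \eqref{eq:thm-cross-moments} to show that the maximiser of $M^\ast$ equals $\pm\sigma_A$ regardless of which single vector $z_u$ is omitted.
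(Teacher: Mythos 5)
Your proposal takes a genuinely different route from the paper, but the ``propagation'' step has a gap that is fatal precisely in the hard regime $\lambda$ close to~$1$. You assert that $\xi_{u,v}$ and $\xi_{u,v'}$ are essentially uncorrelated for fixed $u$ and $v\ne v'$, citing~\eqref{eq:thm-cross-moments}; but that estimate requires all four vertices $u,v,u',v'$ to be \emph{distinct}, and the repeated-vertex case is handled in Lemma~\ref{lem:self-avoiding} only via Cauchy--Schwarz, which permits $\Cov\bigl(N^{(k)}_{u,v},N^{(k)}_{u,v'}\bigr)$ to be as large as $\Theta(s^{2k}/n^2)$. In fact it \emph{is} that large: pairs of self-avoiding paths from $u$ that share an initial segment of length $j\ge 1$ ending at some vertex $w$ and then separate towards $v$ and $v'$ contribute (after averaging over $\sigma_w$, using Lemma~\ref{lem:path} with $m=2$) roughly $n^{2k-j-2}\cdot\frac{d^j s^{2(k-j)}}{n^{2k-j}}\,\sigma_v\sigma_{v'}$, and summing over $j\ge1$ gives
\[
\Cov\bigl(N^{(k)}_{u,v},N^{(k)}_{u,v'}\mid\sigma\bigr) = \Bigl(\tfrac{d}{s^2-d}+o(1)\Bigr)\,\tfrac{s^{2k}}{n^2}\,\sigma_v\sigma_{v'}.
\]
(This also shows up if you run Lemma~\ref{lem:two-saw-paths} with $u'=u$: the indicator $1_{v'\notin\{u,v\}}$ fails, the extra $n^{-1}$ is lost, and the ``$\Gamma_1$''-type cross term is of order $s^{2k}n^{-2}$ rather than $s^{2k}n^{-3+o(1)}$.) The sign $\sigma_v\sigma_{v'}$ is exactly aligned with the product of means, so $\Var(z_u\cdot\sigma_A\mid\sigma)$ picks up a cross term of order $m(m-1)\,\tfrac{d}{s^2-d}\,\tfrac{s^{2k}}{n^2}$ on top of the diagonal $m\bigl(2\tfrac{s^2}{s^2-d}-1\bigr)\tfrac{s^{2k}}{n^2}$. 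Against the squared signal $m^2 s^{2k}/n^2$, the signal-to-noise ratio tends to $(s^2-d)/d=\lambda-1$ as $m\to\infty$, not to infinity. Increasing $m$ therefore cannot push the per-vertex Chebyshev error below $1/2$ unless $\lambda>3$, so the argument yields nothing for $1<\lambda\le 3$.

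This is exactly the obstacle the paper's algorithm is built to circumvent, and it explains the two ingredients your proposal drops. First, the paper \emph{subsamples}: the sphere $S_v=B_R(v)\setminus B_{R-1}(v)$ is explored in $G$ itself (parameters $s,d$), while the weights $N^{(k,j)}_{u,u'}$ are computed in the subgraph $G_j$ from which both $B_{R-1}(v)$ and a random $\delta n$-subset have been deleted (parameters $s'=s(1-\delta)$, $d'=d(1-\delta)$). Because the self-avoiding paths now originate on $S_v$ and never re-enter the explored ball, the shared-prefix correlation at the source is absent; and since $s>s'$, the ``signal'' $|M_v|=\bigl|\sum_{w\in S_v}\sigma_w\bigr|$, of typical size $s^R$, eventually dominates the noise scale $\sim s'^R$ as $R\to\infty$ --- this plays the role of your ``make $m$ large'', but actually works all the way down to $\lambda>1$. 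Second, the paper requires the multi-type branching-process / Kesten--Stigum input (Lemmas~\ref{l:treeCoupling} and~\ref{l:compAandAtilde}) to certify that $\sgn(M_v+\kappa s^R\xi_{j,v})$ is non-trivially correlated with $\sigma_v$, and the uniform random tie-breaker $\xi_{j,v}$ is essential because $M_v$ can vanish. Your proposal has no analogue of either device and gives no reason why a fixed-$m$ functional of the $N^{(k)}_{u,v}$ should carry a sign bias uniformly over $\lambda>1$. Finally, you correctly flag that the seeding step needs fourth-moment control of $M^\ast$ and a decoupling argument beyond Theorem~\ref{thm:path-weights-main}; those are genuine missing pieces, though they are secondary to the SNR problem above.
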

With more care in the analysis the running time could be reduced to $O(n d \log n)$
for a slightly modified algorithm.

\subsection{Connections with random matrix theory}\label{s:RMT}

Let $A$ be the adjacency matrix of a graph $G$ from
$\calG(n, \frac an, \frac bn)$. One standard spectral algorithm for community
detection takes
the top eigenvector of $A - \frac dn \1$ (where $\1$ is the $n \times n$
matrix filled with ones) and rounds it to reconstruct approximations of the clusters. To analyze
this algorithm (see, e.g.~\cite{NadakuditiNewman:12}), consider the labelling
$\sigma$ as an vector in $\{1, -1\}^n$; then conditioned on $\sigma$,
the random matrix $M = A - \frac dn \1 - \frac sn \sigma \sigma^T$ has
independent, zero-mean entries.
If $a$ and $b$ are growing not too slowly with $n$ then one can show a
Bai-Yin theorem
for $M$, and it follows that the top eigenvector of the rank-one perturbation
$M + \frac sn \sigma\sigma^T
= A - \frac dn \1$ is correlated with the true labelling $\sigma$.
The random matrix aspects of this analysis have received substantial
attention in recent years. For example, semi-circle laws and local statistics
are now known whenever $a$ and $b$ are growing at least logarithmically
fast in $n$~\cite{Wood:12} (following earlier work that required polynomially fast growth~\cite{EKYY:12,TaoVu:08}), and also for matrix ensembles
with more general i.i.d.\ entries. These ideas were used in~\cite{NadakuditiNewman:12} to establish the block model threshold conjecture in the case where $a$ and $b$ grow at least logarithmically in $n$.

The preceding arguments -- and also the more general random matrix theory --
break down in the sparse case, where $a$ and $b$
are $O(1)$. One reason for this is the presence of high-degree vertices: with
high probability there exist vertices with degree $\Omega(\log n / \log \log n)$, and these
play havoc with the spectrum of $A$.
We emphasize that this is not merely a looseness in the analysis: naive spectral
algorithms genuinely fail for sparse graphs, see e.g.~\cite{Krzakala_etal:13}
for a discussion of why this happens for the stochastic block model,
or~\cite{FlFrFe:05} for a quite different example of the connection
between vertex degree and spectrum in random graphs.

Some attempts were made to modify spectral methods.
A popular idea is to prune all nodes whose degree is larger than some big constant.
This idea was pioneered by Feige and Ofek~\cite{FeigeOfek:05} for a
different application, and studied in our context by
Coja-Oghlan~\cite{CO:10},
who gave a spectral algorithm that succeeds on sparse graphs but not
all the way to the threshold: it requires $s^2 > C d \log d$ for
some constant $C$.
  
Krzakala et al.~\cite{Krzakala_etal:13} suggest quite a different
way to ``fix'' the spectrum of $A$: instead of $A$, they consider
a non-symmetric matrix that avoids the contribution of high-degree vertices
by counting non-backtracking paths in the graph instead of all paths in the graph. 
Although simulations strongly suggested that the non-backtracking matrix
had desirable spectral properties whenever $s^2 > d$, the proof
remained elusive until very recently
(and after the first appearance of this work),
when Bordenave et al.~\cite{BoLeMa:15} gave a solution.
Their result required new developments in random matrix theory,
partly because the non-backtracking matrix is very sparse and partly
because its entries are far from i.i.d.
These methods were further developed by Bordenave~\cite{Bordenave:15},
who gave a new proof of Alon's conjecture for the second eigenvalue
of random $d$-regular graphs.

In an independent (and concurrent) work, Massouli\'e~\cite{Massoulie:13} gave a
proof of Theorem~\ref{thm:main} using a spectral algorithm. He considered
the matrix $M$ where $M_{uv}$ is the number of self-avoiding walks between
$u$ and $v$ of length $\alpha \log n$, for some not-too-large constant $\alpha$.
This ``regularized'' matrix has several advantages over the non backtracking matrix we consider.
The matrix is quite dense (all degree are polynomial) and in fact is close to regular. Moreover, the matrix is symmetric which allows standard perturbation theory to apply. On the other hand, the entries of the matrix are not independent.
Still, Massouli\'e showed how to apply the trace method and analyze the spectrum of the matrix.
He proved that if $s^2 > d$ then $M$ has
a separation
between its second- and third-largest eigenvalues, and that the second eigenvector
is correlated with the true labelling. Hence, the spectral algorithm that
rounds the second eigenvector of $M$ succeeds down to the threshold.
We note that the algorithm we describe is much more efficient than the one by~\cite{Massoulie:13}, which might not even be implementable in polynomial time
(for example, counting self-avoiding walks is \#P-complete~\cite{LiOgTo:03} even for fairly simple families of graphs); in any case, simply writing down the dense $n \times n$ matrix in~\cite{Massoulie:13} will take time $O(n^2)$.

\section{The algorithm and its running time}

In this section, we will describe the algorithm and give
its analysis assuming Theorem~\ref{thm:path-weights-main}.
We will begin by describing how to use the quantities $N_{u,v}^{(k)}$
to estimate the graph labelling. In
Section~\ref{s:NCalculation}, we will show how these quantities
may be computed efficiently, thereby completing the description
of our algorithm. In Section~\ref{sec:alg-analysis}, we prove the algorithm's
correctness.

\subsection{The algorithm}
Recall that our random graph model adds a within-class edge
with probability $a/n$ and a between-class edge with probability $b/n$,
where $a$ and $b$ are parameters that may grow slowly with $n$.
We set $d = (a+b)/2$ and $s = (a-b)/2$, and assume that $s^2 / d \ge \lambda
> 1$ for all $n$.

We begin by describing a simplified version of our algorithm.
This simplified version runs more slowly, but
it is more intuitive and will serve to motivate the main algorithm.
The basic idea is to fix a very slowly increasing sequence, say
$R = R_n = 2 \lceil \log \log \log \log n \rceil$.
We write $B_r(v)$ for the set of vertices whose path distance to $v$
in $G$ is at most $r$, and we write $S_v = B_R(v) \setminus B_{R-1}(v)$.
Fix a node $w^*$ with large degree (at least $\sqrt{\log \log n}$, say).
For every other node $v$, consider the graph $H = H(v)$
obtained by removing $w^*$ and $B_{R-1}(v)$ from $G$. Our estimate
for $\sigma_v$ will be
\[
  \tau_v = \sgn \left(\sum_{u \sim w^*} \sum_{w \in S_v} N_{u,w}^{(k)}\right),
\]
where $k = \Theta(\log n)$ and $N_{u,w}^{(k)}$
is computed with respect to the graph $H$.
After observing that $H$ is essentially distributed according to the
stochastic block model,
Theorem~\ref{thm:path-weights-main} and the fact (coming from the
theory of multi-type branching processes) that $\sum_{w \in S_v} \sigma_w$
is typically of a larger order than $\sqrt{|S_v|}$ together imply
that $\tau_v = \sgn \sum_{u \sim w^*, w \in S_v} \sigma_u \sigma_w$
with probability going to 1. The theory of branching processes
also implies that when $s^2 > d$ then the sign of $\sum_{w \in S_v}$ is
non-trivially correlated with $\sigma_v$. Hence, $\tau$ is non-trivially
correlated with $\sigma$.

The preceding algorithm has two flaws that we will correct shortly.
First, the distribution of $H$ is slightly painful to work with,
because after removing the node $w^*$ the remaining edges are no longer
independent.
Second, the running time of the algorithm above will be about
$O(n^2 \log n)$, because
we must compute the numbers $N_{u,w}^{(k)}$ (each of which takes time
$O(n \log n)$) with respect to $O(n)$ different graphs $H(v)$. This
could be fixed by handling several nodes simultaneously: we could
remove $\bigcup_v B_{R-1}(v)$ from $G$, where the union is taken over,
say, $n/\log n$ vertices $v$. This is almost the approach that we will
take, but we need to be careful that whatever we remove, the
remaining graph is almost distributed according to the
stochastic block model. We will ensure this by a slightly convoluted
plan: instead of removing specific nodes
and neighborhoods, we will remove $\delta n$
vertices from $G$ uniformly at random and look for nodes and neighborhoods
that are contained in the removed part. The precise description of
our algorithm follows:

Let $R = 2 \lceil \log \log \log \log n \rceil$.
Let $\delta' >0 $ be chosen so that $s^2 (1-\delta')^2 =  d (1-\delta')$ and let $\delta = \delta'/2$. We will choose constants $\kappa$ and $k$ depending
on $s'$ and $d'$ (the precise dependence will be given later).
Then, we proceed as follows:

\begin{itemize}
\item
Remove at random $\lceil \sqrt{n}\rceil$ vertices $V''$ from the graph , leaving the graph $G'=(V',E')$
\item
Let $w^{\ast}$ be a node in $V''$ whose number of neighbors in $V'$ is closest to $\lceil\sqrt{\log\log n}\rceil$.  Let $S_\ast$ be the set of its neighbors in $V'$.
\item
For each $v\in V'$ denote $S_v=B_{R}(v) \setminus B_{R-1}(v)$.
\item
For each $1 \leq j \leq \log n$, let $U_j$ be a uniformly random set
of $\lceil n \delta\rceil - \lceil\sqrt{n}\rceil$ vertices of $V' \setminus S_\ast$; set
$V_j = V' \setminus (S_\ast \cup U_j)$.
\item For each $v\in V'$ and $1 \leq j \leq \log n$ define
\[
  \tau_{j,v} = \sgn\left(\sum_{u \in S_\ast \cap V_j, u' \in S_v\cap V_j}  N^{(k,j)}_{u,u'} + \kappa \frac{{s'}^{k+R+1}}{d' }  |S_{\ast}|  \xi_{j,v}\right),
\]
where $\xi_{j,v}$ are i.i.d.\ random variables uniform on $[-1,1]$ and
\[
  N^{(k,j)}_{u,u'} = \sum_{\stackrel{\gamma\in \NB_{k,u,u'}}{\gamma\subset V_j}} X_{\gamma,d'}.
\]
Here, we define the function $\sgn$ so that $\sgn(0) = 0$.
\item
  For each $v\in V'$ let $J_v$ be the first $j$ such that $B_{R-1}(v)\cap V_j = \emptyset$ and $(S_v \cup S_\ast) \subset V_j$, and 0 if no such $j$ exists.  Then set $\tau(v) = \tau_{J_v,v}$ when $J_v\neq 0$ and $\tau_{J_v,v} \ne 0$.
  For all other $v \in V$, choose $\tau(v)$ at independently at random uniformly from $\{1,-1\}$.
\end{itemize}

We will prove Theorem~\ref{thm:alg} in Section~\ref{sec:alg-analysis}
by showing that
the output $\tau$ of the algorithm above is correlated with the true partition with high probability.
In the following section  we describe how to evaluate the $\tau$ in time $O(nd \log^2 n)$.

\subsection{Efficient implementation of the algorithm}\label{s:NCalculation}
The main computational step in the algorithm above is to compute
$N^{(k,j)}_{u,u'}$; in this section, we will describe how to do so.
First, however, note that $N^{(k,j)}_{u,u'}$ is just $N^{(k)}_{u,u'}$
computed on the subgraph induced by $V' \setminus (S_\ast \cup V_j)$.
In particular, it is enough to show how to compute $N^{(k)}_{u,u'}$
efficiently.

Let $V$ be the  vertex set and $A$  the adjacency matrix of the graph $G$.
We recall the definition of $N^{(k)}_{u,v}$ and introduce some related matrices

\begin{definition} Let $I$ denote the $n \times n$ identity matrix,
  let $D$ be the $n \times n$ diagonal matrix whose $u$th diagonal 
  entry is the degree of the vertex $u$, and let
  $\1$ be the $n \times n$ matrix all of whose entries are $1$.
  Define the $n \times n$ matrix $N^{(k)}$ by
\[
  N^{(k)}_{u,v} = \sum_{\gamma \in \NB_{k,u,v}} X_\gamma,
\]
with the convention that $N^{(0)}=I$. For $\rho \in \R$, define the $n \times n$ matrix $Q^{(k,\rho)}$ by
\begin{equation} \label{eq:Q}
Q^{(k,\rho)}_{u,v} = \sum_{j=0}^{\lfloor k/2 \rfloor} \rho^{2j} N^{(k-2j)}_{u,v}
\end{equation}
(where an empty sum is defined to be zero, so $Q^{(k,\rho)}$
is the zero matrix for $k < 0$),
and define the $4n \times 4n$ matrices
\begin{equation} \label{eq:M}
\calM= \left(
     \begin{array}{cccc}
       (1-d/n) A & -(1-d/n)^2 (D-I) & - (d/n)(\mathbbm{1}-A-I) & - (d/n)^2 ((n-1)I-D) \\
       I & 0 & 0 & 0 \\
       (1-d/n) A & -(1-d/n)^2 D & - (d/n)(\mathbbm{1}-A-I) & - (d/n)^2 ((n-2)I-D) \\
       0 & 0 & I & 0 \\
     \end{array}
   \right),
\end{equation}
\begin{equation} \label{eq:hatM}
\hat{\calM}= \left(
     \begin{array}{cccc}
       (1-d/n) A & -(1-d/n)^2 D & - (d/n)(\mathbbm{1}-A-I) & - (d/n)^2 ((n-1)I-D) \\
       0 & 0 & 0 & 0 \\
       0 & 0 & 0 & 0 \\
       0 & 0 & 0 & 0 \\
     \end{array}
   \right).
\end{equation}
Finally, define the $4n \times n$ matrix $\mathcal{Q}_k$ by
\begin{equation} \label{eq:mathQ}
\mathcal{Q}_k= \left(
               \begin{array}{cccc}
                 Q^{(k,1-d/n)} & Q^{(k-1,1-d/n)} & Q^{(k,-d/n)} & Q^{(k-1,-d/n)} \\
               \end{array}
             \right)^T
\end{equation}
\end{definition}

\begin{lemma} \label{lem:matrices}
We have
\[
\calM \mathcal{Q}_k  = \mathcal{Q}_{k+1}
\]
and
\[
\hat{\calM} \mathcal{Q}_k= \left(
               \begin{array}{cccc}
                 N^{(k+1)} & 0 & 0 & 0 \\
               \end{array}
             \right)^T.
\]
\end{lemma}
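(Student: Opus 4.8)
The plan is to prove the two identities by tracking what each of the four block-rows of $\mathcal{Q}_k$ means combinatorially and then verifying that applying $\calM$ (resp. $\hat\calM$) row-by-row reproduces the recursion that defines non-backtracking paths. First I would unwind the definitions: the $u,v$ entry of $N^{(k)}$ is $\sum_{\gamma} X_\gamma$ over length-$k$ non-backtracking paths, and $Q^{(k,\rho)}_{u,v} = \sum_{j=0}^{\lfloor k/2\rfloor} \rho^{2j} N^{(k-2j)}_{u,v}$ is a ``$\rho$-weighted'' sum over non-backtracking paths of length $k, k-2, k-4, \dots$. The four rows of $\mathcal{Q}_k$ are $Q^{(k,1-d/n)}$, $Q^{(k-1,1-d/n)}$, $Q^{(k,-d/n)}$, $Q^{(k-1,-d/n)}$; the first pair carries the parameter $\rho = 1-d/n$ (the weight of an edge) and the second pair carries $\rho = -d/n$ (the weight of a non-edge), and within each pair we keep both the length-$k$ and length-$(k-1)$ versions because the non-backtracking recursion needs to remember one step back. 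So the real content is an identity about how $Q^{(k+1,\rho)}$ and $Q^{(k,\rho)}$ are obtained from the $Q^{(k,\cdot)}$ and $Q^{(k-1,\cdot)}$ by one matrix multiplication.

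The key step is the elementary non-backtracking recursion. A non-backtracking path of length $k+1$ from $u$ to $v$ is a non-backtracking path of length $k$ from $u$ to some neighbor $w$ of $v$ (in the complete-graph sense, i.e. any $w \ne v$ and $w$ not equal to the second-to-last vertex), with the step $\{w,v\}$ appended. Splitting on whether $\{w,v\} \in E(G)$ or not, the weight $W_{\{w,v\}}$ contributes a factor $1-d/n$ or $-d/n$; the matrices $A$ and $\1 - A - I$ pick out exactly the two cases, while the diagonal corrections $D$, $(D-I)$, $(n-1)I - D$, $(n-2)I-D$ are precisely the bookkeeping that removes the one forbidden ``backtracking'' choice of $w$ (and handles the fact that $w$ must also differ from $u$ in the short-path terms, which is where the $I$ vs. no-$I$ discrepancy between the first and third rows of $\calM$ comes from). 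I would verify the four scalar identities — one for each row of the product $\calM \mathcal{Q}_k$ — by expanding $Q^{(k+1,\rho)}_{u,v}$, applying the recursion to each $N^{(k+1-2j)}$, re-indexing the sum over $j$, and matching terms against the corresponding row of $\calM$ acting on the four rows of $\mathcal{Q}_k$. The $\hat\calM$ identity is the same computation but stopped after producing $N^{(k+1)}$ alone (no $\rho^{2j}$ tail, since $\hat\calM$ only extracts the top-length term), so it follows from the first-row computation with the $j\ge 1$ contributions dropped; this is why $\hat\calM$ has the $(1-d/n)A$, $-(1-d/n)^2 D$, $-(d/n)(\1-A-I)$, $-(d/n)^2((n-1)I-D)$ block in row one and zeros elsewhere.

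The main obstacle is purely bookkeeping: getting the diagonal correction terms exactly right. The subtlety is that the ``non-backtracking'' constraint forbids $w = u_{k-1}$ (the previous vertex), and separately a path of length $k$ from $u$ to $w$ already has its own endpoint structure, so when we append $\{w,v\}$ we must subtract off the contributions where $v$ coincides with the previous-to-last vertex. Tracking which vertex plays which role — and in particular why the length-$(k-1)$ rows appear with one less unit in the diagonal coefficient ($(D-I)$ vs $D$, $(n-1)I-D$ vs $(n-2)I-D$) — requires care but no cleverness. Concretely I would set up the recursion as a statement about pairs (current vertex, previous vertex), show that $Q^{(k,\rho)}$ and $Q^{(k-1,\rho)}$ together encode enough of that pair-state to close the recursion, and then the matrix identity is just the transcription of a case analysis on the last edge. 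Once the four row-identities are checked, $\calM\mathcal{Q}_k = \mathcal{Q}_{k+1}$ and the $\hat\calM$ formula both drop out immediately.
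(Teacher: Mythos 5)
Your plan matches the paper's own proof: set up and iterate the non-backtracking recursion for $N^{(k)}$ (in the paper, via the auxiliary quantities $N^{(k)}_{u,w,v}$), use the fact that $W_{\{u,w\}}$ takes only the two values $1-d/n$ or $-d/n$ to split the iterated sum into the four block columns indexed by $A$, $D$-corrections, $\1-A-I$, and the remaining diagonal, and then read off the block identities for $\calM$ and $\hat\calM$, with $\hat\calM$ obtained from the first row of $\calM$ by dropping exactly the $\rho^2 Q^{(k-1,\rho)}$ tail via the relation $Q^{(k+1,\rho)} = N^{(k+1)} + \rho^2 Q^{(k-1,\rho)}$. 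The only slips in your sketch are cosmetic: your parenthetical restating the non-backtracking constraint is garbled (the requirement is that $v$ differ from the second-to-last vertex of the length-$k$ path, while $w$ ranges over all $w \ne v$), and you describe extending at the last step whereas the left-multiplication $\calM\mathcal{Q}_k$ corresponds to extending at the first step, though by symmetry of $N^{(k)}$ this makes no difference.
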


\begin{proposition} \label{prop:efficient}
For a graph on $n$ vertices and $m$ edges and for every vector $z$ the matrix
$N^{(k)} z$ can be computed in time $O((m+n) k)$.
\end{proposition}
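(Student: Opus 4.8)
The plan is to compute $N^{(k)} z$ by iterating the matrix $\calM$ of \eqref{eq:M} against a $4n$-dimensional vector, using Lemma~\ref{lem:matrices}, and then reading off the answer with one application of $\hat\calM$. Concretely, set $z_0 = \mathcal{Q}_0 \, z$, the $4n$-vector obtained by stacking $Q^{(0,1-d/n)} z, Q^{(-1,1-d/n)} z, Q^{(0,-d/n)} z, Q^{(-1,-d/n)} z$; by the convention $N^{(0)} = I$ and the fact that $Q^{(k,\rho)}$ vanishes for $k<0$, this is just $(z, 0, z, 0)^T$ up to the easily-computed $\rho^0$ terms, so it costs $O(n)$. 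Then define $z_{j+1} = \calM z_j$ for $j = 0, \dots, k-1$. By the first identity of Lemma~\ref{lem:matrices}, $z_j = \mathcal{Q}_j z$ for every $j$, and in particular $z_k = \mathcal{Q}_k z$. Finally, by the second identity of Lemma~\ref{lem:matrices}, $\hat\calM z_k = \hat\calM \mathcal{Q}_k z = (N^{(k+1)} z, 0, 0, 0)^T$, so after $k$ steps we have $N^{(k)} z$ available (run the recursion to $z_{k-1}$ and apply $\hat\calM$, or equivalently index-shift). Thus the entire computation is $k$ matrix-vector products against $\calM$ plus one against $\hat\calM$, plus $O(n)$ setup.

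The remaining point is the cost of a single product $\calM w$ for a $4n$-vector $w$. Writing $\calM$ in its $4\times 4$ block form, each block is one of: a multiple of $A$; a multiple of a diagonal matrix ($D$, $D-I$, $(n-1)I-D$, etc.); a multiple of $I$; or a multiple of $\1 - A - I$. Applying $A$ to an $n$-vector costs $O(m+n)$ since $A$ has $2m$ nonzero entries (and we do this a bounded number of times, once per block-column in which $A$ appears). Applying a diagonal matrix or $I$ costs $O(n)$. Applying $\1$ to an $n$-vector costs $O(n)$ as well, since $\1 w' = (\sum_i w'_i)\,\mathbbm{1}$; hence $\1 - A - I$ also costs $O(m+n)$. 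Since $\calM$ has a constant number ($16$) of blocks, one product $\calM w$ costs $O(m+n)$, and likewise for $\hat\calM$. Multiplying by the $k$ iterations gives total time $O((m+n)k)$, as claimed.

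There is no serious obstacle here: the content has been front-loaded into Lemma~\ref{lem:matrices}, whose correctness encodes the combinatorial recursion for counting (weighted) non-backtracking paths — namely that a non-backtracking path of length $k+1$ extends one of length $k$ by a step that is not the reverse of the last edge, which is what forces the $4$-fold state space (tracking whether the last edge used weight $1-d/n$ or $-d/n$, and the one-step memory needed to forbid backtracking) and the $Q^{(k,\rho)}$ bookkeeping of the geometric corrections. Assuming that lemma, the only thing to verify carefully is that each of the four matrix types appearing as blocks of $\calM$ and $\hat\calM$ admits an $O(m+n)$-time action on a vector, which is the elementary observation above about sparse, diagonal, identity, and all-ones matrices. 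One should also note that $d$, $n$ are given and the degrees (the diagonal of $D$) can be precomputed in $O(m+n)$ time from $A$, so forming the blocks implicitly costs nothing beyond the bound.
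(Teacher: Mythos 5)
Your proof is correct and follows essentially the same route as the paper's: invoke Lemma~\ref{lem:matrices} to reduce the computation to $\hat\calM \calM^{k-1}\mathcal{Q}_0 z$, then observe that each block of these matrices (sparse $A$, diagonal, identity, or rank-one $\1$) acts on a vector in $O(m+n)$ time, giving $O((m+n)k)$ overall. The only difference is that you spell out the rank-one trick $\1 w = (\sum_i w_i)\,\mathbbm{1}$ explicitly, which the paper states more tersely as each block being ``a sum of a sparse matrix with $O(n+m)$ entries and a rank $1$ matrix.''
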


\begin{proof}
The proof follows from the fact that
(by Lemma~\ref{lem:matrices}) $N^{(k)} z$ 
is the first $n$ coordinates of
\[
\hat{\calM} \calM^{k-1} \mathcal{Q}_0 z,
\]
and that each of the matrices $\calM, \hat{\calM}$ and $\mathcal{Q}_0$ is made of at most $16$ blocks, each of which is a sum
of a sparse matrix with $O(n+m)$ entries and a rank $1$ matrix.
Therefore, the displayed expression above can be computed with
$k+1$ matrix-vector multiplications, each of which requires 
$O(n+m)$ time.
\end{proof}

We can now prove the running time bound in Theorem~\ref{thm:alg}.
Indeed, in iteration $j$ of the algorithm,
the sum $\sum_{u \in S_\ast \cap V_j, u' \in S_v \cap V_j}  N_{u,u'}^{(k,j)} $ is the non-trivial computation that needs to be done.
This sum can be read from the entries of $N^{(k)} z$, where $N$ is computed on the graph with the removed nodes
and $z$ is the indicator of the vertices in $S_\ast$. 
By Proposition~\ref{prop:efficient}, the running time of iteration $j$ is $O((n+m)k)$. Since there are $\log n$ iterations and we have $m=O(nd)$ and $k=O(\log n)$ we obtain that the running time of the algorithm is $O(nd \log^2 n)$.

It remains to prove Lemma~\ref{lem:matrices}.
\begin{proof}[Proof of Lemma~\ref{lem:matrices}]
  We will write $N^{(k)}_{u,w,v}$ to denote the $N^{(k)}_{u,v}$, but with
  the sum restricted to non-backtracking paths which move to
 $w$ on their first step. Then we have the recursion
\begin{align*}
N^{(k)}_{u,v} &= \sum_{w\neq u}  N^{(k)}_{u,w,v} \\
&=  \sum_{w\neq u} W_{\{u,w\}} (N^{(k-1)}_{w,v} - N^{(k-1)}_{w,u,v})\\
&=  \sum_{w\neq u} W_{\{u,w\}} \left(N^{(k-1)}_{w,v} - W_{\{u,w\}} (N^{(k-2)}_{u,v} - N^{(k-2)}_{u,w,v})\right)\\
& = \sum_{w\neq u} W_{\{u,w\}} N^{(k-1)}_{w,v} -  \sum_{w\neq u} W_{\{u,w\}}^2 N^{(k-2)}_{u,v} +
\sum_{w \neq u} W_{\{u,w\}}^2  N^{(k-2)}_{u,w,v}
%&=  \sum_{w\neq u} (J_{(u,w)}-d/n) (N^{(k-1)}_{w,v} - (J_{(u,w)}-d/n) (N^{(k-2)}_{u,v}))
\end{align*}
By expanding the terms of the form $N^{(k-2)}_{u,w,v}$ repeatedly, we obtain by induction that
\begin{align*}
N^{(k)}_{u,v} &=  \sum_{w\neq u}\Bigg[ \Big(\sum_{j=0}^{\lfloor (k-1)/2 \rfloor} W_{\{u,w\}}^{1+2j} N^{(k-1-2j)}_{w,v} \Big) -  \Big(\sum_{j=0}^{\lfloor (k-2)/2 \rfloor} W_{\{u,w\}}^{2+2j} N^{(k-2-2j)}_{u,v}\Big) \bigg]\\
&=  \sum_{w\sim u}\Bigg[ \Big(\sum_{j=0}^{\lfloor (k-1)/2 \rfloor} (1-d/n)^{1+2j} N^{(k-1-2j)}_{w,v} \Big) -  \Big(\sum_{j=0}^{\lfloor (k-2)/2 \rfloor} (1-d/n)^{2+2j} N^{(k-2-2j)}_{u,v}\Big) \bigg]\\
&+ \sum_{\substack{w\not\sim u\\w\neq u}}\Bigg[ \Big(\sum_{j=0}^{\lfloor (k-1)/2 \rfloor} (-d/n)^{1+2j} N^{(k-1-2j)}_{w,v} \Big) -  \Big(\sum_{j=0}^{\lfloor (k-2)/2 \rfloor} (-d/n)^{2+2j} N^{(k-2-2j)}_{u,v}\Big) \bigg]
\end{align*}
 The recursion above can be written using the matrix $Q$ from~(\ref{eq:Q}) in the following way:
\begin{align*}
N^{(k)} &=  (1-d/n) A Q^{(k-1,1-d/n)} -(1-d/n)^2 D Q^{(k-2,1-d/n)}\\
&\qquad - (d/n)(\mathbbm{1}-A-I) Q^{(k-1,-d/n)} - (d/n)^2 ((n-1)I-D) Q^{(k-2,-d/n)}
\end{align*}
Moreover,
\begin{align*}
Q^{(k,1-d/n)}&= N^{(k)} +  (1-d/n)^2 Q^{(k-2,1-d/n)} \\
 &=  (1-d/n) A Q^{(k-1,1-d/n)} -(1-d/n)^2 (D-I) Q^{(k-2,1-d/n)}\\
&\qquad - (d/n)(\mathbbm{1}-A-I) Q^{(k-1,-d/n)} - (d/n)^2 ((n-1)I-D) Q^{(k-2,-d/n)},
\end{align*}
and
\begin{align*}
Q^{(k,-d/n)}&= N^{(k)} +  (d/n)^2 Q^{(k-2,-d/n)} \\
 &=  (1-d/n) A Q^{(k-1,1-d/n)} -(1-d/n)^2 D Q^{(k-2,1-d/n)}\\
&\qquad - (d/n)(\mathbbm{1}-A-I) Q^{(k-1,-d/n)} - (d/n)^2 ((n-2)I-D) Q^{(k-2,-d/n)}.
\end{align*}
Written in terms of the matrices $\mathcal{Q}$ from~(\ref{eq:mathQ}) and $\calM,\hat{\calM}$ defined in~\eqref{eq:M} and~\eqref{eq:hatM}, the recursions above can be written as
$
\calM \mathcal{Q}_k  = \mathcal{Q}_{k+1}
$
and
$
\hat{\calM} \mathcal{Q}_k= \left(
               \begin{array}{cccc}
                 N^{(k+1)} & 0 & 0 & 0 \\
               \end{array}
             \right)^T,
$
as claimed.
\end{proof}

\subsection{Correctness of the algorithm}\label{sec:alg-analysis}

In this section, we will prove the correctness of the algorithm
assuming Theorem~\ref{thm:path-weights-main}. We begin with some
preliminary observations about the distributions of various subgraphs
of $G$.
The distribution of $G'$ is simply a stochastic block model with fewer vertices, that is $G(n - \lceil \sqrt{n} \rceil,a/n,b/n)$.
Let $G_j = (V_j,E_j)$ denote the graph obtained at iteration $j$;
$G_j$ is also distributed as a stochastic block model, but we will need
to say more because we will need to use $G_j$ conditioned on some extra properties. In particular, we need to argue that conditioned on a vertex neighborhood being removed, the distribution on the remaining graph is drawn (approximately) from the block model. The technical issue here is that the removed vertices are correlated and moreover we need to condition on some of their labels.
Nevertheless, this can be handled because the neighborhood of a single vertex does not contain too many other vertices. 

For a vertex $v$ in $G_j$, let $U = U(v)$ denote the set $S_v \cup S_{\ast}$.
We will be interested in the distribution of $(G_j,U,\sigma_U)$ and we would like to couple it with a configuration of
$(G',U',\sigma'_{U'})$ drawn from $\calG(n-\lceil \delta n \rceil,a/n,b/n)$ and $U'$ is some fixed set of vertices of size $|U|$.

\begin{lemma} \label{lem:Gj_cond}
Fix a vertex $v \in V'$ and a labelling $\tau$ of $V$.
Let $\P_1$ denote the distribution of $(G_j, \sigma_{V_j})$ conditioned on $S_v \cup S_\ast$, the graph structure of $B_R(v)$ and the events
 \[
   |B_R(v)| \leq n^{0.1}, \quad B_{R-1}(v) \cap V_j = \emptyset, \quad S_v \subset V_j, \quad \sigma_U = \tau_U, \quad \sigma_{B_{R-1}(v)} = \tau_{B_{R-1}(v)},
\]
where $U = S_v \cup S_\ast$.

Given a set $U'$ of vertices and a labelling $\tau'$,
let $\P_2$ denote the distribution of
$(G', \sigma') \sim \calG(n - \lceil \delta n\rceil, a/n, b/n)$
conditioned on $\sigma'_{U'} = \tau'_{U'}$.
Suppose that we can identify $V(G')$ and $V(G)$ in such a way that
$U = U'$ and $\tau$ agrees with $\tau'$ on $U$.
Then for large enough $n$, the measures $\P_1$ and $\P_2$ satisfy that
\[
d_{TV}(\P_1,\P_2) \leq n^{-0.3}.
\]
\end{lemma}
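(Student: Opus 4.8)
The plan is to build an explicit coupling between $\P_1$ and $\P_2$ and show that the two measures disagree only on a low-probability event. First I would unwind what conditioning on $\P_1$ actually does to the edges of $G_j$. The vertex set $V_j = V' \setminus (S_\ast \cup U_j)$, and since $U_j$ is a uniformly random subset of $V' \setminus S_\ast$ of fixed size, conditioning on $B_{R-1}(v) \cap V_j = \emptyset$ and $S_v \subset V_j$ is just conditioning on a particular (random) subset of vertices being included or excluded; crucially, because $|B_R(v)| \le n^{0.1}$, the number of vertices whose inclusion status we condition on is at most $n^{0.1}$, which is negligible compared to $\delta n$. So the conditional distribution of $V_j$ (given the constraints) is still close, in the appropriate sense, to a uniform random subset of size $\lceil \delta n \rceil$ of $V'$, the only subtlety being the correlations induced by forcing $B_{R-1}(v)$ out and $S_v \cup S_\ast$ in. The key point is that after we fix $V_j$, the edges of $G_j$ are edges of $G'$ restricted to $V_j$, hence are independent across pairs and distributed exactly as in the block model --- \emph{except} that we have also conditioned on the graph structure of $B_R(v)$ and on the labels $\sigma_U$, $\sigma_{B_{R-1}(v)}$.

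The second step is to argue that this extra conditioning is harmless for the pairs of vertices that matter. The graph structure of $B_R(v)$ is determined by edges among $B_R(v) \subset B_{R-1}(v) \cup S_v$; but $B_{R-1}(v)$ is removed from $V_j$, so the only such edges lying inside $V_j$ are edges within $S_v$ and between $S_v$ and $B_{R-1}(v)$ --- and the latter have at least one endpoint outside $V_j$. Edges within $S_v$ are a set of at most $|S_v|^2 \le n^{0.2}$ pairs whose status we have pinned down; every other pair inside $V_j$ has its edge drawn freshly and independently with the correct block-model probability given the (conditioned) labels of its endpoints. Similarly, conditioning on $\sigma_U = \tau_U$ and $\sigma_{B_{R-1}(v)} = \tau_{B_{R-1}(v)}$ only fixes the labels of at most $|U| + |B_{R-1}(v)| \le n^{0.1} + O(\sqrt{\log\log n})$ vertices; the remaining vertices of $V_j$ have i.i.d.\ uniform $\pm 1$ labels, exactly as in $\P_2$ conditioned on $\sigma'_{U'} = \tau'_{U'}$. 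So on the event that the identification $V(G) \leftrightarrow V(G')$ sends $V_j$ to the fixed vertex set underlying $\P_2$, the two laws agree on all pairs \emph{except} the at-most-$n^{0.2}$ edges inside $S_v$, whose law under $\P_2$ is the product block-model law while under $\P_1$ they are pinned to particular values.

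I would then bound $d_{TV}(\P_1, \P_2)$ by a union of two contributions: (i) the probability, under the natural coupling of the vertex sets, that the random set $V_j$ fails to match the fixed vertex set of $\P_2$ after relabelling --- but here we have freedom in how we set up the identification, so we can arrange $U = U'$ and then couple the remaining random choice of $V_j \setminus U$ against the fixed set $V(G') \setminus U'$ optimally; since both have the same size up to lower-order terms ($\lceil \delta n\rceil$ versus $\lceil \delta n\rceil - \lceil\sqrt n\rceil$ and the $n^{0.1}$-many forced inclusions/exclusions), this mismatch probability is $O(n^{-c})$ for a suitable constant, absorbing also the $\lceil\sqrt n\rceil$ discrepancy in the number of removed vertices; and (ii) the probability that a block-model sample on $V(G')$ happens to realize exactly the pinned values on the $\le n^{0.2}$ edges inside $S_v$ --- this is not a $d_{TV}$ loss at all, since we may simply define the coupling to overwrite those edges, contributing nothing to total variation because those pairs lie inside $S_v$ which is part of the conditioned data $U$ on \emph{both} sides; more precisely, $\P_2$ is conditioned on $\sigma'_{U'}$ but not on the edges inside $S_v$, so the residual discrepancy is that $\P_1$ knows these edges and $\P_2$ does not --- but since the statement only asks for closeness of the joint law of $(G_j, \sigma_{V_j})$, and these edges are a deterministic function of the conditioning $\sigma_U$... actually here one must be slightly careful: the cleanest route is to further condition $\P_2$ on the (correct marginal) values of the edges inside $S_v$, note this conditioning event has probability $\ge q'^{\,n^{0.2}} \gg n^{-0.3}$-- no, that is too small. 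The honest fix, and the step I expect to be the main obstacle, is to observe that $S_v = B_R(v) \setminus B_{R-1}(v)$ is an \emph{independent set in $G_j$ with high probability} (two vertices at distance exactly $R$ from $v$ that are adjacent would create a short cycle, and $G$ is $\ell$-tangle-free with $\ell \gg R$), so with probability $1 - n^{-1+o(1)}$ there are \emph{no} edges inside $S_v$ to pin, and conditioning on the graph structure of $B_R(v)$ tells $\P_1$ nothing extra about $G_j$ beyond what $\sigma_U$ already encodes. Combining this $n^{-1+o(1)}$ event with the $O(n^{-c})$ vertex-set mismatch gives $d_{TV}(\P_1,\P_2) \le n^{-0.3}$ for $n$ large, as claimed.
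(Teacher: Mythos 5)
Your proposal misses the actual source of the total-variation gap, and the steps you spend most effort on (vertex-set mismatch, edges pinned inside $S_v$) are side issues. The paper's proof is a straight coupling: identify $V(G')$ with a subset of $V(G)$ so that $U=U'$ (this is given, so there is no ``vertex-set mismatch'' probability to absorb), set the labels equal on $U$, try to make the labels on the rest of $V_j$ identical, and then include exactly the same edges. The entire content of the lemma is the middle step, and you assert it away: you write that ``the remaining vertices of $V_j$ have i.i.d.\ uniform $\pm 1$ labels, exactly as in $\P_2$.'' That is false, and it is the one place where $\P_1$ and $\P_2$ genuinely differ.

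Under $\P_1$ we have conditioned on the graph structure of $B_R(v)$, which in particular reveals that every vertex of $V_j\setminus B_R(v)$ is \emph{not} adjacent to any vertex of $B_{R-1}(v)$. Non-adjacency to a known, labelled set is not label-independent: if $n_\pm$ denotes the number of $\pm$ labels in $\sigma_{B_{R-1}(v)}$, then for a vertex $u$ outside $U$,
\[
\frac{\P_1[\sigma_u = +]}{\P_1[\sigma_u = -]} = \left(\frac{1-a/n}{1-b/n}\right)^{n_+-n_-},
\]
so $\P_1[\sigma_u=+]=\tfrac12 + O(|B_R(v)|/n) = \tfrac12 + O(n^{-0.9})$. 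There are $\Theta(n)$ such vertices, and the standard bound $d_{TV}(\Binom(n,\tfrac12),\Binom(n,\tfrac12+x))=O(x\sqrt n)$ then gives $d_{TV}(\P_1,\P_2)=O(n^{-0.4})\le n^{-0.3}$. This $n^{-0.4}$ is the dominant term in the lemma, and nothing in your proposal accounts for it. Your observation that $S_v$ is an independent set with high probability is a reasonable remark about edges interior to $S_v$, but it does not bound the label discrepancy, and ``with high probability'' does not by itself translate into a TV bound on a conditional measure — you would still need to quantify the bias on the labels to close the argument.
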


\begin{proof}
  The proof will couple $\sigma'$ with $\sigma_{V_j}$ and the edges
  of $G'$ with the edges of $G_j$.
The coupling proceeds in the following way:
\begin{itemize}
\item
  We take $\sigma'$ and $\sigma$ to be equal on $U = U'$
  (neither one is random in either measure).
\item
Then we try to couple all other labels so they are completely identical.
\item
Finally, if the labels are identical, we will include exactly the same edges. This is possible since different edges are independent
and the probabilities of including edges just depend on the end points.
\end{itemize}

The only non-trivial part of this proof is showing that we can perform the second step with high probability.
Note that in $\P_2$, all of the labels outside $U'$ are independent
and uniformly distributed.
The conditional distribution outside $U$ in $G_j$ under the conditioning is also i.i.d.\ (since no edges are revealed).
However, in $\P_1$ the labels are biased, since we know they were not connected to the vertices of $B_{R-1}(v)$.
Indeed, for each vertex $u$ outside $U$ we have that
\[
\frac{\P_1[\sigma_u = +]}{\P_1[\sigma_u = -]} = \left(\frac{1-\frac{a}{n}}{1-\frac{b}{n}}\right)^{n_+-n_-},
\]
where $n_{\pm}$ is the number of $\pm 1$ labels in $\sigma_{B_{R-1}(v)}$.
Thus
\[
\P_1[\sigma_u = +] = \frac{1}{2} + O(|B(v,R)|/n) = \frac{1}{2} + O(n^{-0.9}).
\]
It is well known (see e.g.~\cite{Roos:2001}) that
\[
d_{TV}(\Binom(n,1/2),\Binom(n,1/2+x)) = O(x \sqrt{n})
\]
which therefore implies that
$
d_{TV}(\P_1,\P_2) \leq O(n^{-0.4}) \leq n^{-0.3},
$
as claimed.
\end{proof}

Next we note that with high probability $|S_\ast|=\lceil\sqrt{\log\log n}\rceil$ since the probability that there exists a vertex in $V''$ with that number of neighbors tends to one; we will condition on this event.
Also, we may assume without loss of generality that $\sigma_{w^{\ast}} = +$.
We will denote 
\begin{equation}\label{eq:M-def}
  M_v = \sum_{v\in S_v} \sigma_v \text{ and } M_\ast = \sum_{v\in S_{\ast}} \sigma_v.
\end{equation}
Note that $M_\ast$ is a sum
of i.i.d. signs, each of which has expectation $\frac sp$ (since we conditioned on $\sigma_{w^\ast} = +$).
Hence, a.a.s.
\[
  |M_\ast -\frac{s}{d} |S_{\ast}| | \leq |S_{\ast}|^{3/4},
\]
which means in particular that $M_\ast$ a.a.s.\ has the same sign as
$s$.

Before we proceed to the estimates that apply specifically for our
algorithm, let us note a simple corollary of the first three
statements of Theorem~\ref{thm:path-weights-main}:
\begin{lemma}\label{lem:self-avoiding}
  Take disjoint sets $U_1, U_2 \subset V$ that have cardinality
  $n^{o(1)}$; let $U = U_1 \cup U_2$. Under the notation and assumptions of
  Theorem~\ref{thm:path-weights-main}, if $Y = \sum_{u \in U_1, v \in U_2} Y_{u,v}$ then uniformly for all labellings $\sigma_U$ on $U$,
  \begin{align*}
    \E \left[Y \mid \sigma_{U}\right]
  &= (1 + n^{-1 + o(1)}) \frac{s^k}{n} \sum_{u \in U_1,v \in U_2} \sigma_u \sigma_v \\
    \Var \left[Y \mid \sigma_{U}\right]
    &= O\left(|U_1| |U_2| (|U_1| + |U_2|) \frac{s^{2k}}{n^2}\right).
  \end{align*}
\end{lemma}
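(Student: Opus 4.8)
The plan is to deduce Lemma~\ref{lem:self-avoiding} directly from the first three displays of Theorem~\ref{thm:path-weights-main} by linearity of expectation and a bilinear expansion of the variance. The expectation statement is immediate: since $Y = \sum_{u \in U_1, v \in U_2} Y_{u,v}$, linearity together with \eqref{eq:thm-first-moment} gives
\[
  \E[Y \mid \sigma_U] = \sum_{u \in U_1, v \in U_2} (1 + n^{-1+o(1)}) \frac{\sigma_u \sigma_v s^k}{n} = (1 + n^{-1+o(1)}) \frac{s^k}{n} \sum_{u \in U_1, v \in U_2} \sigma_u \sigma_v,
\]
where the uniformity of the error term over all the (at most $n^{o(1)} \cdot n^{o(1)} = n^{o(1)}$) pairs lets us pull the common factor out; the $n^{-1+o(1)}$ absorbs the $n^{o(1)}$ loss from summing that many pairs.

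For the variance, I would write
\[
  \Var[Y \mid \sigma_U] = \sum_{(u,v), (u',v')} \bigl(\E[Y_{u,v} Y_{u',v'} \mid \sigma_U] - \E[Y_{u,v} \mid \sigma_U]\E[Y_{u',v'} \mid \sigma_U]\bigr),
\]
the sum running over pairs $(u,v), (u',v') \in U_1 \times U_2$, and split it according to how many coordinates the two index pairs share. When $u,v,u',v'$ are all distinct, \eqref{eq:thm-cross-moments} says the covariance term is $n^{-1+o(1)} \E[Y_{u,v}\mid\sigma_U]\E[Y_{u',v'}\mid\sigma_U]$, which by \eqref{eq:thm-first-moment} is $O(n^{-1+o(1)} s^{2k}/n^2)$ per pair, contributing $O(|U_1|^2|U_2|^2 n^{-1+o(1)} s^{2k}/n^2)$ in total — negligible compared with the claimed bound because $|U_1||U_2| \le n^{o(1)} \ll n^{1-o(1)}$. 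When $(u,v) = (u',v')$, the covariance term is at most the second moment $\E[Y_{u,v}^2 \mid \sigma_U] \le (1+o(1)) 2(s^2/(s^2-d)) s^{2k}/n^2 = O(s^{2k}/n^2)$ by \eqref{eq:thm-second-moment} (here $s^2/(s^2-d) \le \lambda/(\lambda-1) = O(1)$), and there are $|U_1||U_2|$ such pairs, giving $O(|U_1||U_2| s^{2k}/n^2)$. The remaining case is when the pairs share exactly one coordinate — say $u = u'$ but $v \ne v'$ (there are $O(|U_1||U_2|^2)$ such pairs) or $v = v'$ but $u \ne u'$ (there are $O(|U_1|^2|U_2|)$ such pairs). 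For these I would bound the covariance crudely by Cauchy–Schwarz: $|\E[Y_{u,v}Y_{u,v'} \mid \sigma_U]| \le \E[Y_{u,v}^2\mid\sigma_U]^{1/2}\E[Y_{u,v'}^2\mid\sigma_U]^{1/2} = O(s^{2k}/n^2)$, and the product-of-expectations term is likewise $O(s^{2k}/n^2)$, so each such pair contributes $O(s^{2k}/n^2)$, for a total of $O\bigl((|U_1||U_2|^2 + |U_1|^2|U_2|) s^{2k}/n^2\bigr) = O\bigl(|U_1||U_2|(|U_1|+|U_2|) s^{2k}/n^2\bigr)$. Collecting the three cases, the dominant term is exactly $O\bigl(|U_1||U_2|(|U_1|+|U_2|) s^{2k}/n^2\bigr)$, as claimed.

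The only mildly delicate point — and the step I would treat most carefully — is bookkeeping the $n^{o(1)}$ error terms so that summing over $n^{o(1)}$ index pairs does not swamp the main terms; this is why the statement is phrased with $n^{-1+o(1)}$ rather than $o(1)$ in the expectation, and why the all-distinct contribution to the variance stays subdominant. There is nothing substantive beyond applying Theorem~\ref{thm:path-weights-main} term by term and a one-line Cauchy–Schwarz bound for the shared-coordinate cross terms; no new probabilistic input is needed.
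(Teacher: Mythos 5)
Your proof is correct and follows essentially the same route as the paper: both use linearity for the mean and split the sum over pairs $(u,v),(u',v')$ into the diagonal case (bounded by~\eqref{eq:thm-second-moment}), the shared-one-coordinate case (bounded by Cauchy--Schwarz together with~\eqref{eq:thm-second-moment}), and the all-distinct case (handled by~\eqref{eq:thm-cross-moments}). The only cosmetic difference is that you decompose the variance as a sum of covariances term by term, whereas the paper first bounds $\E[Y^2 \mid \sigma_U]$ and subtracts $(\E[Y\mid\sigma_U])^2$ at the end; these are algebraically equivalent.
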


\begin{proof}
  By Theorem~\ref{thm:path-weights-main},
  \[
    \E \left[Y \mid \sigma_{U}\right]
    = \sum_{u \in U_1, v \in U_2} \E \left[Y_{u,v} \mid \sigma_{U}\right]
    = (1 + n^{-1 + o(1)}) \frac{s^k}{n}
    \sum_{u \in U_1, v \in U_2} \sigma_u \sigma_v,
  \]
  as claimed.

  For the second moment,
  \[
    \E \left[Y^2 \mid \sigma_{U}\right]
    = \sum_{u,u' \in U_1, v, v' \in U_2} \E\left[Y_{u,v} Y_{u', v'} \mid \sigma_{U}\right].
  \]
  We divide the sum into three parts: the first part
  (containing $|U_1| |U_2|$ terms) sums over $u = u'$ and $v = v'$;
  for this part,
  we apply~\eqref{eq:thm-second-moment}.
  The second part
  (containing less than $|U_1|^2 |U_2| + |U_2|^2 |U_1|$ terms)
  sums over indices with either $u = u'$ or $v = v'$; for this
  part, we use the bound
  \[
    \E [Y_{u,v} Y_{u',v'} \mid \sigma_{U}]
    \le \E [Y_{u,v}^2 \mid \sigma_{U}]^{1/2} \E [Y_{u',v'}^2 \mid \sigma_{U}]^{1/2}
  \]
  and then apply~\eqref{eq:thm-second-moment} to each term on the right hand side.
  Finally, the third part ranges over distinct $u, u', v, v'$
  (less than $|U_1|^2 |V_1|^2$ terms), and we apply~\eqref{eq:thm-cross-moments}
  Putting these three parts together,
  \begin{multline*}
    \E \left[Y^2 \mid \sigma_{U}\right]
    \le (1 + o(n^{-1+o(1)})) \Big[(|U_1| |U_2| + |U_1|^2 |U_2| + |U_1| |U_2|^2)
    2\left(\frac{s^2}{s^2-d}\right) \frac{s^{2k}}{n^2}  \\
    + \frac{s^{2k}}{n^2} \sum_{u \ne u'} \sum_{v \ne v'} \sigma_u \sigma_v \sigma_{u'} \sigma_{v'} 
  \Big].
  \end{multline*}
  Finally, note that the second term above differs from $(\E [Y \mid \sigma_{U}])^2$ by at most $2 |U_1| |U_2| (|U_1| + |U_2|) s^{2k}/n^2$.
  Subtracting $(\E [Y \mid \sigma_{U}])^2$ from the displayed equation above
  thus proves our claim about the variance of $Y$.
\end{proof}

We may apply the previous lemma with~\eqref{eq:thm-bad-paths} and
Chebyshev's inequality to show that 
\[
  Z_v := \sum_{u \in S_\ast \cap V_{J_v}, u' \in S_v \cap V_{J_v}} N_{u,u'}^{(k,J_v)}
\]
can be used to
estimate the sign of $M_v$ (which, recall, was defined
in~\eqref{eq:M-def}).

\begin{lemma}\label{l:MainN}
For a random vertex $v$ and any $\epsilon>0$, conditioned on $J_v \ne 0$,
\[
  \P\left[\left|Z_v - \frac{{s'}^{k} }{n}M_z M_\ast \right| > \frac{\epsilon {s'}^{k +R}}{n} |S_{\ast}|  \right] \to 0 \text{ as $n \to \infty$.}
\]
\end{lemma}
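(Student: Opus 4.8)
The plan is to compare $Z_v$ with its self-avoiding analogue $Y := \sum_{u\in S_\ast,\,u'\in S_v} Y_{u,u'}$ computed inside $G_{J_v}$, to replace $G_{J_v}$ by a genuine stochastic block model via Lemma~\ref{lem:Gj_cond}, to extract the conditional mean and variance of $Y$ from Lemma~\ref{lem:self-avoiding}, and to finish with Chebyshev's inequality. Everything is carried out conditionally on $J_v\ne 0$ together with the graph structure and the labels of $B_R(v)$, so that $S_v$, $M_v$ and $M_\ast$ are deterministic; the point of $J_v\ne0$ is that it forces $S_v\cup S_\ast\subset V_{J_v}$, so that $N^{(k,J_v)}_{u,u'}$ is literally $N^{(k)}_{u,u'}$ computed inside $G_{J_v}$ and the $M_\ast$, $M_v$ read off from $\sigma_{V_{J_v}}$ are the full ones.

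First I would dispose of bad local structure. For a uniformly random $v$, $B_R(v)$ is a.a.s.\ a tree (immediate from $\ell$-tangle-freeness, since $R\ll\ell$) and $\E|B_R(v)|=O(d^R)=n^{o(1)}$, so $|S_v|\le|B_R(v)|\le h(n)$ a.a.s.\ by Markov, for a slowly growing $h(n)=n^{o(1)}$ that we may choose; conditioning on $J_v\ne 0$ can only decrease $|S_v|$ further, since $\P[J_v\ne0\mid B_R(v)]\approx 1-\big(1-\delta^{|B_{R-1}(v)|}(1-\delta)^{|S_v|}\big)^{\log n}$ is tiny when $B_R(v)$ is large. We arrange $h(n)$ so that in addition $h(n)(|S_\ast|+h(n))=o\big((s')^{2R}|S_\ast|\big)$: this uses that $R\to\infty$ while, by the choice $\delta=\delta'/2$, the reduced parameters $s'=s(1-\delta)$, $d'=d(1-\delta)$ satisfy $(s')^2/d=\tfrac{(1-\delta'/2)^2}{1-\delta'}\ge 1+c_\lambda$ for a constant $c_\lambda>0$ (so $(s')^{2R}/d^R=((s')^2/d)^R\to\infty$), together with the extra damping of $|S_v|$ coming from $J_v\ne 0$. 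Then, by Lemma~\ref{lem:Gj_cond}, the conditional law of $(G_{J_v},\sigma_{V_{J_v}})$ lies within total variation $n^{-0.3}$ of $\calG(n-\lceil\delta n\rceil,a/n,b/n)$ with the labels of $U:=S_v\cup S_\ast$ frozen; since $|U|=n^{o(1)}$, Theorem~\ref{thm:path-weights-main} and hence Lemma~\ref{lem:self-avoiding} apply with $U_1=S_\ast$, $U_2=S_v$ and $s$ replaced by $s'$. This gives $\E[Y\mid\sigma]=(1+n^{-1+o(1)})\tfrac{(s')^k}{n}M_\ast M_v$ and $\Var[Y\mid\sigma]=O\big(|S_\ast|\,|S_v|\,(|S_\ast|+|S_v|)\,(s')^{2k}/n^2\big)$, the $n^{-0.3}$ coupling error being harmless on the coupling of Lemma~\ref{lem:Gj_cond}.

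It remains to combine these. By~\eqref{eq:thm-bad-paths} and a union bound over the $|S_\ast|\,|S_v|=n^{o(1)}$ pairs $(u,u')$, $|Z_v-Y|\le(s')^k n^{-4/3+o(1)}=o\big(\epsilon(s')^{k+R}|S_\ast|/n\big)$ with probability $1-n^{-1/3+o(1)}$; likewise the bias $|\E[Y\mid\sigma]-\tfrac{(s')^k}{n}M_\ast M_v|\le n^{-1+o(1)}\tfrac{(s')^k}{n}|S_\ast|\,|S_v|$ is negligible on the scale $\epsilon(s')^{k+R}|S_\ast|/n$. Finally, Chebyshev with threshold $t=\tfrac12\epsilon(s')^{k+R}|S_\ast|/n$ gives $\P\big[|Y-\E[Y\mid\sigma]|>t\mid\sigma\big]=O\big(|S_v|(|S_\ast|+|S_v|)/(\epsilon^2(s')^{2R}|S_\ast|)\big)$, which on the good event is $o(1)$ by our choice of $h(n)$. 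Averaging over $\sigma$ and putting the three estimates together proves the claim.

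I expect the requirement that $\Var[Y]$ fit inside the error window to be the real difficulty. That window, $\epsilon(s')^{k+R}|S_\ast|/n$, is a factor $(1-\delta)^R$ smaller than the typical size $\tfrac{(s')^k}{n}|S_\ast|\,|M_v|\asymp\tfrac{(s')^k}{n}|S_\ast|s^R$ of the main term, so the variance bound has very little slack and one genuinely needs a tight hold on $|S_v|$. This is possible only because we have moved to the reduced graph, whose parameters were tuned so that the model is still above the Ising reconstruction threshold ($(s')^2>d'$) and so that $R$ may grow — which is what lets $(s')^{2R}$ beat the branching-process size $d^R$ of $S_v$ — and because the event $J_v\ne 0$ itself suppresses atypically large $S_v$. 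Making $h(n)(|S_\ast|+h(n))=o((s')^{2R}|S_\ast|)$ actually hold throughout the range $a,b=n^{o(1/\log\log n)}$, and verifying that layering the conditionings (local structure of $v$, labels, and $J_v\ne 0$) preserves the near-independence that Lemma~\ref{lem:self-avoiding} rests on — exactly the role of Lemma~\ref{lem:Gj_cond} — is where the bulk of the care goes.
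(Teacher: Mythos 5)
Your plan is essentially the paper's proof. Both arguments (i) condition on $J_v = j \ne 0$ and move to the measure $\P_2$ of Lemma~\ref{lem:Gj_cond}, (ii) set $Y_v = \sum_{u\in S_v,u'\in S_\ast}Y_{u,u'}$ in $G_j$ and apply Lemma~\ref{lem:self-avoiding} with the reduced parameters $s',d'$, (iii) use Chebyshev against the window $\epsilon (s')^{k+R}|S_\ast|/n$, which amounts to taking $t = \epsilon (s')^R$ in a $t$-scaled deviation bound, and (iv) control $Z_v - Y_v$ by applying~\eqref{eq:thm-bad-paths} and a union bound over the $|S_v|\,|S_\ast|$ pairs. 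The only cosmetic difference is in how $|S_v|$ is pinned down: you introduce a bespoke cutoff $h(n)$ and track the constraint $h(|S_\ast|+h)=o((s')^{2R}|S_\ast|)$, whereas the paper simply conditions on the cleaner high-probability event $|S_v|^2\le |S_\ast| = n^{o(1)}$; both choices make the Chebyshev ratio $O\!\left(t^{-2}(|S_v| + |S_v|^2/|S_\ast|)\right)\to 0$, and both ultimately rest on exactly the observation you emphasize, namely that the post-deletion parameters satisfy $(s')^2/d>1$ so that $(s')^{2R}$ dominates the branching size $d^R$ of $S_v$ as $R\to\infty$.
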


\begin{proof}
  We condition on $J_v = j \ne 0$ and work with the measure $\P_2$ from
  Lemma~\ref{lem:Gj_cond}; we will also condition on the
  (high probability) event that $|S_v|^2 \le |S_\ast| = n^{o(1)}$.
  Hence, we can apply Lemma~\ref{lem:self-avoiding} to the graph $G_j$
  with $U_1 = S_v$, $U_2 = S_{\ast}$, and with slightly
  different graph parameters:
  $n - \lceil \delta n\rceil$ is the number of nodes, and the parameters
  $s$ and $d$ are replaced by $s'$ and $d'$. Setting
  $Y_v = \sum_{u \in S_v, u' \in S_\ast} Y_{u,u'}$ (where $Y_{u,u'}$ is now computed
  with respect to the graph $G_j$), Lemma~\ref{lem:self-avoiding}
  and Chebyshev's inequality give that for any $t \ge 1$,
  \[
    \P_2 \left[
      \left|
        Y_v - \frac{{s'}^k}{n-\delta n} M_z M_*
      \right| > \frac{2t {s'}^k}{n - \delta n} |S_{\ast}|
    \right] \le O(t^{-2} (|S_v| + |S_v|^2/|S_{\ast}|)).
  \]

  Next, we control $Z_v - Y_v$. By~\eqref{eq:thm-bad-paths} and
  a union bound,
  \[
    \P_2 \left[
      |Z_v - Y_v| \ge \frac{t {s'}^k}{n - \delta n} |S_{\ast}|
    \right]
    \le \sum_{u \in S_v, u'\in S_{\ast}} \P_2 \left[
      |Z_{u,u'} - Y_{u,u'}| \ge \frac{t {s'}^k}{|S_v|(n - \delta n)}
    \right].
  \]
  Since $|S_v|$ and $|S_\ast|$ are $n^{o(1)}$,~\eqref{eq:thm-bad-paths}
  implies that for any $t \ge 1$, the right hand side above converges
  to zero. Putting it together and setting $t = \epsilon {s'}^R$
  (which is at least 1 for large enough $n$),
  \[
    \P_2 \left[
      \left|
        Z_v - \frac{{s'}^k}{n-\delta n} M_z M_*
      \right| > \frac{\epsilon {s'}^{k+R}}{n - \delta n} |S_{\ast}|
    \right] \to 0.
  \]
  By Lemma~\ref{lem:Gj_cond}, the same statement holds under $\P_1$,
  conditioned on $J_v = j$.
  \end{proof}

\subsubsection{Branching processes}

The purpose of this section is to show that $M_v$ can be used
to estimate $\sigma_v$. We will do this by exploiting the
connection between neighborhoods in $G$ and multi-type branching processes.
Since this section is the only place where we will use the theory
of branching processes, we will give only a brief introduction;
readers unfamiliar with this theory should consult the book by
Athreya and Ney~\cite{AthreyaNey:72}.

For notational simplicity, we will assume for now that $s > 0$.
The case $s < 0$ will be discussed at the end of the section.
For the rest of this section,
$T$ will denote a Galton-Watson branching process with $\Poisson(d)$
offspring distribution rooted at $\rho$.
We will assign three random labellings to the vertices of $T$ in the following way:
first, divide $T$ into connected components by running
\emph{bond percolation}: deleting each edge independently with probability $s/d$. Then, for each component choose a label uniformly in $\{\pm 1\}$ and assign that label to all vertices in that component.
We define $\eta,\eta^+,$ and $\eta^-$ respectively to be the configurations generated
this way where the connected component of the root is labelled randomly,
labelled $+1$, or labelled $-1$ respectively.
Let $\zeta=\eta_\rho$, let $\Psi_R =\sum_{v\in S_R(\rho)} \eta_v$ and define $\Psi_R^\pm$ similarly.

It is well-known (and not hard to check) that the random labelling
$\eta$ may also be generated in the following way: choose
$\eta_\rho$ uniformly at random. For every child $u$ of $\eta_\rho$
independently, let $\eta_u = \eta_\rho$ with probability $\frac{a}{a+b}$
and otherwise let $\eta_u = -\eta_\rho$. Then recurse this process down
the tree: for every child $w$ of $u$ independently, let $\eta_w = \eta_u$
with probability $\frac{a}{a+b}$ and otherwise let $\eta_w = -\eta_u$.
The processes $\eta^+$ and $\eta^-$ may be generated similarly,
except that instead of beginning with $\eta_\rho$ labelled randomly,
we fix $\eta^+_\rho = +1$ and $\eta^-_\rho = -1$.

\begin{lemma}
  Let $\xi$ be a uniform random variable on $[-1, 1]$ that is
  independent of $T$, $\eta$, and $\eta^\pm$.
There exist $\kappa > 0$ and $\eps > 0$ such that
\begin{equation}\label{e:BetterThenHalf}
\P[\Psi_R^+ \geq \xi \kappa s^R] \geq \frac12 + 2\epsilon.
\end{equation}
\end{lemma}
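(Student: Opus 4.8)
The goal is to show that the random-component-labelled Galton-Watson process, viewed at generation $R$, carries a detectable bias toward the root label $\zeta$, and that this bias survives thresholding by $\xi \kappa s^R$.

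\medskip

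\noindent\textbf{Plan.} The plan is to understand the first and second moments of $\Psi_R^+$ (and by symmetry $\Psi_R^-$, $\Psi_R$) and then use a Paley--Zygmund-type argument. First I would record that the coupling with broadcast on the tree makes $\Psi_R$ a martingale-like object: conditioned on $\eta_\rho = +1$, each child contributes $\pm 1$ with bias $\frac{a-b}{a+b} = s/d$, and iterating down $R$ levels, while also accounting for the random number of offspring, one gets $\E[\Psi_R^+ \mid T] = \sum_{u \in S_R(\rho)} (s/d)^R$ when conditioning on the tree shape. Taking expectations over the $\Poisson(d)$ offspring, $\E[\Psi_R^+] = d^R (s/d)^R = s^R$, since the expected size of generation $R$ is $d^R$. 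So the mean of $\Psi_R^+$ is exactly $s^R$, which is positive and of the right order. The condition $s^2 > d$ is precisely what guarantees that the second moment of $\Psi_R^+$ is also of order $s^{2R}$ rather than being swamped by the $d^R$ fluctuations from the branching: a standard recursion for $\Var(\Psi_R^+)$ shows $\E[(\Psi_R^+)^2] = C_R s^{2R}$ with $C_R$ bounded (using $s^2/d > 1$ so that the geometric series $\sum_j (d/s^2)^j$ converges), and this is where the reconstruction/extremality threshold enters. Consequently $\Psi_R^+ / s^R$ has mean $1$ and bounded second moment uniformly in $R$.

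\medskip

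\noindent\textbf{Key steps, in order.} (i) Set up the broadcast coupling (already done in the excerpt) and compute $\E[\Psi_R^+] = s^R$ exactly. (ii) Derive the recursion for $\E[(\Psi_R^+)^2]$ over $R$ by conditioning on the first generation; bound it by $C s^{2R}$ for a constant $C$ depending only on $\lambda$ via $s^2/d \ge \lambda > 1$. (iii) Apply the Paley--Zygmund inequality to $\Psi_R^+$: since $\E[\Psi_R^+] = s^R$ and $\E[(\Psi_R^+)^2] \le C s^{2R}$, for any $\theta \in (0,1)$, $\P[\Psi_R^+ \ge \theta s^R] \ge (1-\theta)^2 / C$. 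This is a constant bounded away from zero, uniformly in $R$. (iv) Handle the random thresholding by $\xi \kappa s^R$ with $\xi$ uniform on $[-1,1]$: since $\P[\xi \kappa s^R \le \theta s^R] = \P[\xi \le \theta/\kappa]$, choosing $\kappa$ large enough (say $\kappa \ge 2\theta$) makes this at least $\frac{1}{2} + \frac{\theta}{2\kappa} \ge \frac12$ plus a small positive amount; combined with independence of $\xi$ from $T,\eta^+$ and with step (iii), one gets $\P[\Psi_R^+ \ge \xi\kappa s^R] \ge \frac12 \cdot (\text{const}) + (\text{const})$. The precise bookkeeping: condition on $\xi$; on the event $\{\xi \le \theta/\kappa\}$ (probability $\ge \frac12$) use $\Psi_R^+ \ge \theta s^R \ge \xi \kappa s^R$ with probability $\ge (1-\theta)^2/C$, and on the complementary event just use the trivial bound $0$. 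A more careful two-sided accounting — also using that $\Psi_R^+$ can be negative but symmetrically so, so that $\P[\Psi_R^+ \ge \xi\kappa s^R]$ on $\{\xi > \theta/\kappa\}$ is still close to $\P[\Psi_R^+ \ge 0]$ — pushes the total above $\frac12 + 2\eps$ for suitable constants. Fix $\theta$, then $\kappa$, then read off $\eps > 0$.

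\medskip

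\noindent\textbf{Main obstacle.} The routine part is the first-moment computation; the crux is step (ii), controlling the second moment. One must set up the variance recursion carefully — splitting $\Psi_R^+ = \sum_{u} \Psi_{R-1}^{(u)}$ over children $u$ of $\rho$, where each $\Psi_{R-1}^{(u)}$ is a copy of $\pm\Psi_{R-1}$ depending on whether $u$ kept the root's label — and then verify that the off-diagonal (same-level, different-subtree) terms, which contribute $\sim d^R$, are genuinely dominated by the diagonal $\sim s^{2R}$ term precisely when $s^2 > d$. This is the point where the hypothesis $s^2/d \ge \lambda > 1$ is essential and where the constant $C = C(\lambda)$ is produced; getting a clean uniform-in-$R$ bound here (rather than one that degrades as $R \to \infty$) is the delicate step. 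After that, Paley--Zygmund and the elementary $\xi$-argument are straightforward, and the constants $\kappa, \eps$ fall out by choosing $\theta$ close enough to $1/\sqrt{C}$-type quantities and $\kappa$ correspondingly large.
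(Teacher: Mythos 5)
Your setup is correct and in fact cleaner than the paper's in one respect: you compute $\E[\Psi_R^+]=s^R$ explicitly (via the broadcast identity $\E[\eta^+_v \mid T]=(s/d)^{\mathrm{depth}(v)}$ and $\E|S_R(\rho)|=d^R$), and the Kesten--Stigum bound $\E[(\Psi_R^+)^2]\le C s^{2R}$ under $s^2>d$ is exactly what the paper invokes. Your Paley--Zygmund step (iii) is also valid even though $\Psi_R^+$ is not nonnegative (the one-sided truncation argument goes through since $\E\Psi_R^+>0$).

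The gap is in step (iv). Paley--Zygmund only delivers a constant $(1-\theta)^2/C$, and for $C\ge 2$ (the typical regime, since $C$ blows up as $s^2/d\downarrow 1$) your bound
\[
\P[\Psi_R^+\ge\xi\kappa s^R]\;\ge\;\P[\xi\le\theta/\kappa]\cdot\tfrac{(1-\theta)^2}{C}\;=\;\tfrac{1+\theta/\kappa}{2}\cdot\tfrac{(1-\theta)^2}{C}
\]
is strictly below $\tfrac12$ and cannot reach $\tfrac12+2\eps$. Your proposed rescue, that ``$\Psi_R^+$ can be negative but symmetrically so,'' is false: $\Psi_R^+$ is the \emph{conditioned} variable with mean $s^R>0$ and is decidedly asymmetric. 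It is $\Psi_R$ (randomly-rooted) that is symmetric, and $\Psi_R^-\stackrel{d}{=}-\Psi_R^+$; conflating these is where the argument breaks.

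The fix that stays closest to your moment setup is to avoid Paley--Zygmund and instead integrate $\xi$ out exactly: conditioned on $\Psi_R^+$, $\P[\xi\kappa s^R\le\Psi_R^+]=g\bigl(\Psi_R^+/(\kappa s^R)\bigr)$ where $g$ is $(x+1)/2$ clipped to $[0,1]$. Since $g(x)\ge\frac{x+1}{2}-\frac{(x-1)_+}{2}$ and $(x-1)_+\le x^2/4$, one gets $\P[\Psi_R^+\ge\xi\kappa s^R]\ge\frac12+\frac{1}{2\kappa}-\frac{C}{8\kappa^2}$, so taking $\kappa$ a large multiple of $C$ yields $\eps=\Theta(1/C)$. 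This uses the first moment as the main term and the second moment only to control the truncation error, which is where $s^2>d$ enters. The paper instead takes a different route that never computes $\E\Psi_R^+$: it couples $\Psi_R^+$ and $\Psi_R^-$ on the same tree and percolation so that $\Psi_R^+-\Psi_R^-$ is exactly twice the level-$R$ size of a $\Poisson(s)$ branching process, uses the symmetry $\P[\Psi_R\ge\xi\kappa s^R]=\tfrac12$ to reduce the target to lower-bounding $\P\bigl[\Psi_R^-/(\kappa s^R)\le\xi\le\Psi_R^+/(\kappa s^R)\bigr]$, and then anticoncentrates the gap $\Psi_R^+-\Psi_R^-$ at scale $s^R$. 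Either fix works, but as written your step (iv) does not close.
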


\begin{proof}
By symmetry, $\Psi_R$ is symmetric about 0 and so if $\xi$ is an independent uniform on $[-1,1]$ then for any $\kappa > 0$,
\[
\P[\Psi_R \geq \xi \kappa s^R] = \frac12.
\]
Moreover, analysis of multi-type branching processes going back to Kesten and Stigum~\cite{KestenStigum:66} shows that $\E \Psi_R^2 = O(s^{2R})$
provided $s^2>d$.  Also, with percolation construction above, it is clear that $\Psi_R^+-\Psi_R^-$ is simply twice the size of the percolation component of $\rho$ intersected with level $R$.  This is exactly given by a branching process with $\hbox{Poisson}(s)$ offspring distribution so
\[
  \liminf_{R\to \infty} \P[\Psi_R^+ - \Psi_R^- \le s^R] \ge \delta
\]
for small enough $\delta > 0$. By Chebyshev's inequality,
both $\Psi_R^+$ and $\Psi_R^-$ belong to $[-\kappa s^R, \kappa s^R]$ with probability
$1 - O(\kappa^{-2})$. Then
\[
\P\left[\frac{\Psi_R^-}{\kappa s^R} \le \xi \le \frac{\Psi_R^+}{\kappa s^R}\right] \geq
\frac 2\kappa \P[\Psi_R^+ - \Psi_R^- \le s^R]
- \P[|\Psi_R^-| \ge \kappa s^R]
- \P[|\Psi_R^-| \ge \kappa s^R]
\ge \frac 2\kappa \delta - O(\kappa^{-2}).
\]
Finally, symmetry of $\Psi_R^+$ and $\Psi_R^-$ implies that
\[
\P[\Psi_R^+ \geq \xi \kappa s^R] \geq
\frac 12 + \P\left[\frac{\Psi_R^-}{\kappa s^R} \le \xi \le \frac{\Psi_R^+}{\kappa s^R}\right] \geq
\frac 12 + \frac 2\kappa \delta - O(\kappa^{-2}),
\]
which completes the proof if $\kappa$ is a sufficiently large constant.
\end{proof}

\begin{lemma}\label{l:treeCoupling}
For $1\leq i \leq \log n$ let $(T_i, \eta_i)$ be iid copies of $(T, \eta)$ above for $R = 2 \lceil \log \log \log \log n \rceil$.  For $v_1,\ldots,v_{\log n}$ be uniformly chosen vertices in $V$,
\[
d_{TV}\left(\{(T_i, \eta_i)\}_{1\leq i \leq \log n},\{(B_R(v_i), \sigma(B_R(v_i))\}_{1\leq i \leq \log n} \right) \to 0
\]
as $n\to\infty$.
\end{lemma}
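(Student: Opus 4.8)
The plan is to establish this total-variation bound via the standard local-weak-convergence coupling between neighborhoods in a sparse stochastic block model and multi-type Galton--Watson trees, together with an independence/union argument to handle the $\log n$ copies simultaneously. For a single vertex $v$, the first step is to recall that $B_R(v)$ in $\calG(n,a/n,b/n)$ can be explored by breadth-first search: at each step one has so far exposed some set $W$ of vertices, and each unexposed vertex becomes a child of the currently-processed vertex $u$ independently with probability $a/n$ if its (as-yet-unrevealed) label agrees with $\sigma_u$ and $b/n$ otherwise. Since the offspring counts are sums of $n - |W|$ independent indicators with success probabilities $a/n$ and $b/n$, and since we only ever explore out to radius $R = 2\lceil\log\log\log\log n\rceil$, the total number of exposed vertices is $n^{o(1)}$ with probability $1 - o(1)$; conditioning on this, each offspring count is within total variation $O(|W|/n) \cdot (\text{typical count})$ of an independent $\Pois(a/n \cdot n) = \Pois(a)$-type variable — more precisely of the mixture that produces $\Pois(d)$ offspring with the $a/(a+b)$ vs $b/(a+b)$ same-label/different-label split described just above the statement. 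Accumulating these errors over the $n^{o(1)}$ exploration steps shows $d_{TV}\big((B_R(v),\sigma(B_R(v))), (T,\eta)\big) = n^{-1+o(1)}$.

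The second step is to upgrade from one vertex to $\log n$ vertices. Here I would use that $v_1,\dots,v_{\log n}$ are uniformly random and that $R$ is so small that the balls $B_R(v_i)$ are pairwise disjoint with probability $1 - o(1)$ (the expected number of collisions is $O((\log n)^2 \cdot n^{o(1)} / n) = o(1)$). On the event of disjointness, one explores the $\log n$ neighborhoods sequentially; because only $n^{o(1)} \cdot \log n = n^{o(1)}$ vertices are ever exposed in total, the edge/label probabilities governing the $j$-th exploration are still within $O(n^{-1+o(1)})$ of the ideal block-model values even after conditioning on everything revealed in explorations $1,\dots,j-1$. Hence the joint law of $\{(B_R(v_i),\sigma(B_R(v_i)))\}_i$ couples to $\log n$ i.i.d.\ copies of $(T,\eta)$ with total variation error at most $\log n \cdot n^{-1+o(1)} + o(1) = o(1)$, using subadditivity of $d_{TV}$ under such sequential couplings and a union bound over $i$.

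The main obstacle — though a routine rather than deep one — is bookkeeping the conditioning carefully: the labels $\sigma_u$ of exposed vertices are revealed as the exploration proceeds, and one must check that conditioning on the labels and edges seen so far does not bias the unrevealed structure by more than the claimed $O(n^{-1+o(1)})$ per step (this is the same phenomenon, and the same estimate, as in Lemma~\ref{lem:Gj_cond}, where a conditional label bias of $O(|B_R(v)|/n)$ was absorbed). One also needs the a priori bound that $\max_i |B_R(v_i)| \le n^{0.1}$, say, with probability $1 - o(1)$, which follows from a crude first-moment / Markov bound on the branching process size since $d = n^{o(1/\log\log n)}$ and $R \ll \log\log n$ force $\E|B_R(v)| \le d^R = n^{o(1)}$. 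Assembling these pieces — pairwise disjointness, per-step coupling error, summation over $\log n$ explorations — gives $d_{TV} \to 0$ as claimed.
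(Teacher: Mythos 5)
Your proof is correct and follows essentially the same route as the paper's: explore the balls $B_R(v_i)$ by breadth-first search, couple the offspring counts at each step with the appropriate Poisson distributions, and use that the total exposed set is $n^{o(1)}$ together with a union bound over steps. The one small cosmetic difference is that the paper conditions on all of $\sigma$ up front and therefore has to absorb the $O(\sqrt n)$ imbalance in $n^\pm$ (giving a per-step coupling error of order $n^{-1/3}$), whereas you treat the unexplored labels as unrevealed and so see only the $O(|W|/n)$ depletion bias; both yield the same $o(1)$ conclusion.
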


\begin{proof}
  This argument is a minor variation on a well-known argument showing the
  local tree-like structure of sparse graphs. We will give only a sketch, but
  a much more detailed argument (although for only one neighborhood) is
  given in~\cite{MoNeSl:13}.

We establish the result by coupling the two processes.   By Markov's inequality, with high probability $\sum_{i=1}^{\log n} |T_i| \leq \log^2 n$
and $\sum_{i=1}^{\log n} |B_R(v_i)| \leq \log^2 n$. Moreover,
by standard arguments in sparse random graphs, $\bigcup B_R(v_i)$
is a disjoint union of trees with high probability.

We reveal the branching process trees by sequentially revealing for each vertex how many children of each label it has ($\hbox{Poisson}(a/2)$ of the same label and $\hbox{Poisson}(b/2)$ of the opposite label) down to level $R$ in a breadth-first manner.

Similarly, we can reveal the neighborhoods of the $v_i$ and their labels in $G$ by sequentially revealing the neighbors and labels of the currently revealed vertices.
Suppose that we condition on the labels of all vertices and on the graph
structure that was revealed so far, and suppose that we want to reveal
the neighbors of a given vertex $u$. With high probability, none
of these revealed neighbors will belong to the already-explored set
and so we will focus on $u$'s neighbors among the unexplored vertices.
If $n^\pm$ are the
numbers of $\pm1$-labelled vertices that have not yet been explored,
then $u$ has $\Binom(n^{\sigma_u}, a/n)$ neighbors of label $\sigma_u$
and $\Binom(n^{\sigma_u}, b/n)$ neighbors of label $-\sigma_u$. Note
that $n^{\pm}$ are both in $n/2 \pm n^{2/3}$ with high probability, because
the original labels were biased by at most $O(n^{1/2})$ and we have
revealed at most $\log^2 n$ of them.

We couple these two processes with the usual coupling of Poisson and Binomial random variables. In each step we fail with probability $O(n^{-1/3})$ and (since there are at most $\log^2 n$ steps) the coupling altogether fails with probability $o(1)$.
\end{proof}

Consider the estimator
\[
  \mathcal{A}_{j,v} = \sgn( M_v  + \kappa s^R \xi_{j,v}).
\]
Using the coupling between graphs and trees, we will show that
$\calA_{j,v}$ is a good estimator for $\sigma_v$. Later,
we will show
that $\calA_{j,v}$ usually agrees with our previous estimator 
$\tau_{j,v}$.

\begin{lemma}\label{l:compAandAtilde}
We have that
\[
\P\left[\frac1{|V'|}\sum_{v\in V'} \sigma_v \mathcal{A}_{j,v} \geq (3/2)\epsilon\right]\to 1
\]
\end{lemma}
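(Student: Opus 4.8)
The plan is to treat the estimators $\mathcal{A}_{j,v}$, $v \in V'$, as nearly independent events and apply a concentration bound to their (signed) sum. First I would fix $j$ and write $X_v = \sigma_v \mathcal{A}_{j,v} \in \{-1,0,1\}$. By Lemma~\ref{l:treeCoupling}, the joint law of the neighborhoods $\{(B_R(v_i), \sigma(B_R(v_i)))\}_{i \le \log n}$ for uniformly chosen $v_i$ is within $o(1)$ total variation of $\log n$ i.i.d.\ copies of the labelled branching process $(T,\eta)$; since $\mathcal{A}_{j,v}$ is a function of $B_R(v)$, $\sigma(B_R(v))$, and the independent auxiliary randomness $\xi_{j,v}$, it suffices to prove the analogous statement for the branching-process model, where the $X_v$ genuinely are i.i.d. Concretely, I would show that $\E[\sigma_\rho \, \mathcal{A}] \ge 2\epsilon$ where $\mathcal{A} = \sgn(\Psi_R + \kappa s^R \xi)$ and then invoke Hoeffding (or a Chernoff bound) over $\log n$ i.i.d.\ bounded terms to conclude that the empirical average is at least $(3/2)\epsilon$ with probability $1 - o(1)$.

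The heart of the argument is the one-vertex estimate $\E[\sigma_\rho \mathcal{A}] \ge 2\epsilon$, and this is essentially a restatement of the preceding lemma. Conditioning on $\sigma_\rho = +1$ (which, by symmetry of the construction, gives $\Psi_R \overset{d}{=} \Psi_R^+$), we have $\P[\mathcal{A} = +1 \mid \sigma_\rho = +1] = \P[\Psi_R^+ \ge -\kappa s^R \xi] = \P[\Psi_R^+ \ge \xi \kappa s^R]$, using that $\xi$ is symmetric on $[-1,1]$ and independent. By~\eqref{e:BetterThenHalf} this is at least $\tfrac12 + 2\epsilon$. The contribution of the event $\{\mathcal{A} = 0\}$ only helps (it makes $\sigma_\rho \mathcal{A} = 0$ rather than $-1$), so
\[
  \E[\sigma_\rho \mathcal{A} \mid \sigma_\rho = +1] = \P[\mathcal{A} = +1 \mid \sigma_\rho = +1] - \P[\mathcal{A} = -1 \mid \sigma_\rho = +1] \ge 2\Bigl(\tfrac12 + 2\epsilon\Bigr) - 1 = 4\epsilon,
\]
and the same bound holds conditioned on $\sigma_\rho = -1$ by the symmetry of the percolation construction, so $\E[\sigma_\rho \mathcal{A}] \ge 4\epsilon$. (The slack between $4\epsilon$ and the target $2\epsilon$ absorbs the $o(1)$ coupling error.)

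To finish, note $\mathcal{A}_{j,v}$ is measurable with respect to $B_R(v)$, $\sigma(B_R(v))$ and $\xi_{j,v}$, with the $\xi_{j,v}$ i.i.d.\ and independent of the graph. Under the branching-process surrogate the terms $X_v$ are i.i.d., bounded in $[-1,1]$, with mean $\ge 4\epsilon$; Hoeffding's inequality gives $\P[\frac{1}{|V'|}\sum_v X_v < (5/2)\epsilon] \le \exp(-\Omega(\epsilon^2 |V'|)) = o(1)$ since $|V'| = n - \lceil\sqrt n\rceil \to \infty$. Transferring back to $G$ via the $o(1)$ total-variation bound of Lemma~\ref{l:treeCoupling} (applied to $\log n$ neighborhoods, which certainly covers the needed subset, or more carefully by applying the coupling to all of $V'$ at the cost of a slightly weaker but still $o(1)$ error from the disjoint-tree and explored-set events) replaces $(5/2)\epsilon$ by $(3/2)\epsilon$ and yields the claim. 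The main obstacle is making the transfer step fully rigorous for a linear-in-$n$ number of neighborhoods rather than the $\log n$ of Lemma~\ref{l:treeCoupling}: one cannot expect $\bigcup_{v \in V'} B_R(v)$ to be a disjoint union of trees, so instead I would either partition $V'$ into $\mathrm{poly}(n)$-many blocks each of size $\Theta(n/\mathrm{polylog})$ handled separately, or — more cleanly — bound the number of $v$ whose $R$-neighborhood contains a cycle or meets another sampled neighborhood by $o(n)$ with high probability (standard sparse-random-graph first-moment estimates), discard those $o(n)$ vertices from the average at a cost of $o(1)$, and apply the i.i.d.\ comparison to the remaining "tree-like" vertices, whose $\mathcal{A}_{j,v}$ are then conditionally independent given the (negligible) exceptional set.
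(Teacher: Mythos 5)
Your first two paragraphs are sound and closely parallel the paper: the reduction to the branching-process surrogate via Lemma~\ref{l:treeCoupling}, the one-vertex mean bound $\E[\sigma_\rho\mathcal{A}]\ge 4\epsilon$ from~\eqref{e:BetterThenHalf} and symmetry, and the intention to finish with Hoeffding, all match the paper's route. The problem is the last step, and you have in fact correctly put your finger on where the difficulty lies, but neither of your proposed patches closes the gap.

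The Hoeffding application to \emph{all} of $V'$ is not justified. Lemma~\ref{l:treeCoupling} couples only $\log n$ uniformly chosen vertices to i.i.d.\ trees; there is no ``branching-process surrogate'' in which the $|V'|$ terms $X_v$ are jointly i.i.d., and such a coupling cannot exist. Since $R\to\infty$, $|B_R(v)|\asymp d^R\to\infty$ and so $\sum_v|B_R(v)|\gg n$: essentially every $R$-ball overlaps many others, so the set of $v$ whose $R$-neighborhood is a tree disjoint from all other sampled neighborhoods is not $|V'|-o(n)$ — it is empty or nearly so. This kills your second fix (``discard exceptional $v$''): the exceptional set is not $o(n)$. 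And your first fix — ``$\mathrm{poly}(n)$-many blocks each of size $\Theta(n/\mathrm{polylog})$'' — is both dimensionally inconsistent and incompatible with the coupling, which only applies to $\log n$ vertices at a time; a block of size $n/\mathrm{polylog}(n)$ is far too large.

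The paper's trick is different and avoids proving any joint-independence statement across $\Theta(n)$ vertices. Partition $V'$ into $M=|V'|/\log n$ blocks $B_1,\dots,B_M$ of size $\log n$ chosen uniformly at random. Each block $B_m$ is marginally a uniform $\log n$-sample from $V'$, so the coupling plus Hoeffding (exactly your step, for $\log n$ terms) gives, for each fixed $m$,
\[
\P\Big[\tfrac{1}{\log n}\sum_{v\in B_m}\sigma_v\mathcal{A}_{j,v}\ge 1.9\epsilon\Big]\ge 1-o(1).
\]
No independence across blocks is needed: writing $W_m=\1\{\tfrac{1}{\log n}\sum_{v\in B_m}\sigma_v\mathcal{A}_{j,v}<1.9\epsilon\}$, one has $\E[\sum_m W_m]=o(M)$ and Markov gives $\P[\sum_m W_m>\delta M]\to 0$ for any constant $\delta>0$. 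On the complementary event, since every block average lies in $[-1,1]$,
\[
\frac{1}{|V'|}\sum_{v\in V'}\sigma_v\mathcal{A}_{j,v}\ \ge\ (1-\delta)\cdot 1.9\epsilon - \delta,
\]
which is at least $1.5\epsilon$ once $\delta$ is small enough. This boundedness-plus-Markov step — which is exactly what the paper means by ``use the fact that $\sigma_{v_i}\mathcal{A}'_{j,v_i}$ are $\pm 1$'' — is the missing ingredient in your writeup.
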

\begin{proof}
Let $v_1,\ldots,v_{\log n}$ be a uniform sample without replacement from $V'$. Take the coupling in Lemma~\ref{l:treeCoupling},
and let $\Psi_{R,i} = \sum_{v \in S_R(\rho_i)} \eta_v$, where
$\rho_i$ is the root of $T_i$. Set
\[
  \mathcal{A}'_{j,v_i} = \sgn(  \Psi_{R,i}  + \kappa s^R  \xi_{j,v_i}).
\]
\begin{align*}
\P[\sum_{i=1}^{\log n} \sigma_{v_i} \mathcal{A}_{j,v_i} \geq 1.9 \epsilon\log n]
=\P[\sum_{i=1}^{\log n} \sigma_{v_i} \mathcal{A}'_{j,v_i} \geq 1.9 \epsilon\log n] +o(1)
\end{align*}
By~\eqref{e:BetterThenHalf},
$\P[\sigma_{v_i} \sgn(  \Psi_{R,i}  + \kappa s^R  \xi_{i})] \geq 2\epsilon$.
By Hoeffding's inequality, since the $\mathcal{A}'_{j,v_i} $ are independent,
\[
\P[\sum_{i=1}^{\log n} \sigma_{v_i} \mathcal{A}'_{j,v_i}  \geq 1.9 \epsilon\log n] \to 1.
\]
By Lemma~\ref{l:treeCoupling},
the same holds for $\mathcal{A}_{j,v_i}$. If we now partition $V'$ to sets of size $\log n$ and use the fact that $\sigma_{v_i} \mathcal{A}'_{j,v_i}$ are $\pm 1$, we obtain the claim of the lemma.
\end{proof}

Recall that we have been assuming $s > 0$. In the case $s < 0$,
Lemma~\ref{l:compAandAtilde} (which is the only result
from this section that we will use later) remains true. Indeed,
in order to generate the $T$ and $\eta$ for the case $s < 0$,
one can generate them for $|s|$ and then flip the sign of every
label in an odd generation. Since $R$ is even, level $R$
of the tree is unchanged and $s^R = |s|^R$. Thus,
Lemma~\ref{l:compAandAtilde} remains true.

\subsubsection{Accuracy of the estimator $\tau_{J_v,v}$}
Recall that $\tau_{j,v}$ was defined as
\[
  \tau_{j,v} = \sgn\left(\sum_{u \in S_\ast, u' \in S_v}  N^{(k,j)}_{u,u'} + \kappa \frac{{s'}^{k+R+1}}{d' n} |S_{\ast}|  \xi_{j,v}\right),
\]
and that $J_v$ is the first $j$ such that
$B_{R-1}(v) \cap V_j = \emptyset$ and $(S_v \cup S_\ast) \subset V_j$,
and $J_v = 0$ if no such $j$ exists.

\begin{lemma}\label{l:JvBound}
We have that for a random $v\in V'$, $\P[J_v=0]\to 0$.
\end{lemma}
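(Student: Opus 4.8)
The plan is to show that for a uniformly random vertex $v \in V'$, with probability tending to one there exists at least one index $j \in \{1, \dots, \log n\}$ among the $\log n$ independent random vertex removals such that $B_{R-1}(v) \cap V_j = \emptyset$ and $(S_v \cup S_\ast) \subset V_j$. Since each $V_j$ is obtained by deleting an independent uniformly random set of $\lceil n\delta\rceil - \lceil\sqrt n\rceil$ vertices from $V' \setminus S_\ast$, conditioned on the neighborhood $B_R(v)$ the events $\{J_v \text{ is realized at step } j\}$ are i.i.d.\ over $j$, so it suffices to show that a single $V_j$ works with probability bounded below by some $p_n$ with $(1-p_n)^{\log n} \to 0$; in fact we will get $p_n$ bounded away from zero.

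First I would control the size of $B_{R-1}(v)$. Because $R = 2\lceil \log\log\log\log n\rceil$ grows extremely slowly and the average degree $d = n^{o(1/\log\log n)}$, a standard first-moment/domination argument for sparse random graphs (the same one invoked in Lemma~\ref{l:treeCoupling} and in Lemma~\ref{lem:Gj_cond}, where the event $|B_R(v)| \le n^{0.1}$ is used) shows that $|B_R(v)| \le d^{R+1} \cdot (\text{polylog}) = n^{o(1)}$ with probability $1 - o(1)$; condition on this event. Then $|B_{R-1}(v)| + |S_v| + |S_\ast| = n^{o(1)}$, and in particular is at most $n^{1/2}$ for large $n$.

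Next, fix such a neighborhood and estimate the probability that a single random deletion set $U_j$ of size $\lceil n\delta\rceil - \lceil\sqrt n\rceil$ chosen uniformly from the $|V'| - |S_\ast|$ vertices of $V' \setminus S_\ast$ simultaneously (i) contains all of $B_{R-1}(v)$ and (ii) avoids all of $S_v$. Writing $b = |B_{R-1}(v)|$ and $c = |S_v \setminus S_\ast|$ (both $n^{o(1)}$), and noting $|U_j|/|V'| \to \delta$, this probability is, up to $1 + o(1)$ factors, $\delta^b (1-\delta)^c$. Since $b, c = n^{o(1)}$ and $\delta$ is a fixed constant in $(0,1)$, this is $\exp(-n^{o(1)}) $; this is \emph{not} bounded below by a constant, so the naive union bound over $\log n$ trials is not immediately enough. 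The main obstacle, then, is exactly this: $\delta^{|B_{R-1}(v)|}$ can be as small as $\exp(-n^{o(1)})$, which beats $\log n$. The resolution is that $|B_{R-1}(v)|$ is typically \emph{much} smaller than $n^{o(1)}$ — for a typical vertex it is only $\mathrm{polylog}(n)$, since the branching number is $d$ and $d^{R} = d^{2\lceil\log\log\log\log n\rceil}$ is polylogarithmic when $d$ grows slowly (and even $d^{R}$ with $d$ mildly growing stays $n^{o(1)}$ but one needs the sharper statement that it is $(\log n)^{O(1)}$, or at worst $e^{(\log n)^{o(1)}}$, for a $1-o(1)$ fraction of vertices). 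With $|B_{R-1}(v)| \le (\log n)^{C}$ on a high-probability event, $\delta^{|B_{R-1}(v)|} \ge \exp(-C' (\log n)^C)$, and we need this times $\log n$ trials to still give success probability $1-o(1)$ — which requires $\exp(-C'(\log n)^C) \cdot \log n \to \infty$, i.e.\ it does \emph{not} quite work either unless $|B_{R-1}(v)|$ is only $O(\log\log n / \log\log\log n)$ or similar.

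So the correct plan, matching the extremely slow choice $R = 2\lceil\log\log\log\log n\rceil$, is: on the high-probability event that $v$ lies in a tree-like neighborhood (Lemma~\ref{l:treeCoupling}), couple $|B_{R-1}(v)|$ with the size of the first $R-1$ generations of a $\mathrm{Poisson}(d)$ branching process; since $R-1$ is of order $\log\log\log\log n$ and $d = n^{o(1/\log\log n)}$, the expected size is $d^{R-1} = \exp(O(\log d \cdot \log\log\log\log n)) = \exp(o(\log n \cdot \log\log\log\log n / \log\log n))$, and one checks this is $n^{o(1)}$ but moreover, by choosing to bound success probability of a single trial by $\delta^{|B_{R-1}(v)|}(1-\delta)^{|S_v|}$ and using that with probability $1 - o(1/\log n)$ (say) we have $|B_{R-1}(v)| \le (\log\log n)^2$, we get single-trial success probability at least $\delta^{(\log\log n)^2} = \exp(-(\log\log n)^2 \log(1/\delta))$, and then $(1 - \exp(-(\log\log n)^2\log(1/\delta)))^{\log n} \to 0$ since $\exp((\log\log n)^2) \gg \log\log n$ but we need $\exp(-(\log\log n)^2\log(1/\delta))\cdot\log n \to \infty$, which holds because $\log\log n \cdot \log n^{-1}\to$ — here one simply needs $R$ chosen slowly enough that $d^{R}\log(1/\delta) \ll \log\log n$, which is guaranteed by $R = 2\lceil\log\log\log\log n\rceil$ under Assumption~\ref{ass}. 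Concretely: with probability $1-o(1)$ over $v$, $|B_{R-1}(v)| \le d^R \le (\log n)^{o(1)}$, so $\delta^{|B_{R-1}(v)|}(1-\delta)^{|S_v|} \ge \exp(-(\log n)^{o(1)})$, hence the probability that none of the $\log n$ independent trials succeeds is at most $\exp(-(\log n)^{1-o(1)}) = o(1)$. Combining with the $o(1)$-probability bad events (large neighborhood, not tree-like, $|S_\ast| \neq \lceil\sqrt{\log\log n}\rceil$) gives $\P[J_v = 0] \to 0$, and averaging over the random choice of $v$ completes the proof. $\qquad\blacksquare$
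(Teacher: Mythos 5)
Your overall plan mirrors the paper's proof: bound $|B_R(v)|$ on a high-probability event, lower-bound the probability that a single $V_j$ realizes both $B_{R-1}(v)\cap V_j=\emptyset$ and $(S_v\cup S_\ast)\subset V_j$, and then exploit the independence of the $\log n$ trials. The place where the argument goes off the rails is the final quantitative step. You conclude ``with probability $1-o(1)$, $|B_{R-1}(v)|\le d^R\le (\log n)^{o(1)}$, so the single-trial success probability is at least $\exp(-(\log n)^{o(1)})$, hence failure over $\log n$ trials is $\exp(-(\log n)^{1-o(1)})=o(1)$.'' This last implication does not follow. Writing $p$ for the single-trial success probability, the failure probability is $(1-p)^{\log n}\le\exp(-p\log n)$, and for this to tend to zero you need $p\log n\to\infty$, i.e.\ $p=\omega(1/\log n)$. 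But $\exp(-(\log n)^{o(1)})$ is of the form $\exp(-\exp(o(1)\cdot\log\log n))$, which can be \emph{much} smaller than $1/\log n=\exp(-\log\log n)$; for instance $(\log n)^{1/\log\log\log n}\gg\log\log n$. So the bound $|B_{R-1}(v)|\le(\log n)^{o(1)}$ that you stop at is genuinely too weak to finish.

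What makes the argument work, and what the paper actually uses, is the sharper estimate $|B_R(v)|\le d^{2R}$ with high probability (by Markov's inequality), together with the observation that the slowly growing choice $R=2\lceil\log\log\log\log n\rceil$ forces
\[
  d^{2R}=\exp\bigl(O(\log d\cdot\log\log\log\log n)\bigr)=o(\log\log n)
\]
(for fixed or very slowly growing $d$). Combined with $|S_\ast|=O(\sqrt{\log\log n})=o(\log\log n)$, one gets a single-trial success probability of at least $e^{-c(d^{2R}+|S_\ast|)}\ge(\log n)^{-1/2}$, which is polynomially large in $1/\log n$ rather than merely ``positive.'' Then $(1-(\log n)^{-1/2})^{\log n}\le\exp(-(\log n)^{1/2})\to 0$ and the lemma follows. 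In short, the missing ingredient in your proposal is the precise bound $|B_R(v)|=o(\log\log n)$ (equivalently: single-trial success probability $(\log n)^{-o(1)}$, not $\exp(-(\log n)^{o(1)})$), and this is exactly why $R$ had to be chosen with such an extreme iterated logarithm.
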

\begin{proof}
Recall that with high probability we have that $|S_\ast|=\sqrt{\log\log n}$.  With high probability $|B_R(v)|\leq d^{2R}$.  Condition on $|B_R(v)|\leq d^{2R}$. The probability that $B_{R-1}\cap V_j=\emptyset$ and $S_v \cup S_\ast \subset V_j$ is bounded below by $e^{-c (d^{2R} +|S_\ast|)} \geq (\log n)^{-1/2}$.  Since these are independent events given $|B_R(v)|$ it follows that with probability tending to one $J_v \neq 0$.
\end{proof}

We now show that the indicators $\mathcal{A}_{J_v,v}$ and $\tau_{J_v,v}$ usually agree.
\begin{lemma}\label{l:compAandB}
\begin{equation} \label{eq:lAB}
  \E \left[\frac1{|V'|}\sum_{v\in V'}  \mathcal{A}_{J_v,v} \tau_{J_v,v} 1_{\{J_v \neq 0\}} \right] \to 1.
\end{equation}
\end{lemma}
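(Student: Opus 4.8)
The plan is to compare the two estimators $\mathcal{A}_{J_v,v} = \sgn(M_v + \kappa s^R \xi_{J_v,v})$ and $\tau_{J_v,v} = \sgn(Z_v + \kappa \frac{{s'}^{k+R+1}}{d' n}|S_\ast|\xi_{J_v,v})$ vertex by vertex, and to show that for a typical $v$ they disagree with probability $o(1)$; averaging over $v \in V'$ then gives \eqref{eq:lAB} after accounting for the event $J_v = 0$, which is negligible by Lemma~\ref{l:JvBound}. The key bridge is Lemma~\ref{l:MainN}, which says that conditioned on $J_v \ne 0$, $Z_v$ concentrates around $\frac{{s'}^k}{n} M_v M_\ast$ up to an error of order $\epsilon \frac{{s'}^{k+R}}{n}|S_\ast|$ with probability tending to $1$. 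Dividing through by the positive constant $\frac{{s'}^k}{n}$, the argument of the $\sgn$ defining $\tau_{J_v,v}$ is, up to that small additive error, equal to $M_v M_\ast + \kappa \frac{{s'}^{R+1}}{d'}|S_\ast|\xi_{J_v,v}$. Since we have conditioned (as we may, a.a.s.) on $M_\ast$ having the same sign as $s'$ and on $|M_\ast - \frac{s'}{d'}|S_\ast|| \le |S_\ast|^{3/4}$, we have $M_\ast = (1+o(1))\frac{s'}{d'}|S_\ast|$, so the argument of $\tau_{J_v,v}$'s sign is $(1+o(1))\frac{s'}{d'}|S_\ast|\big(M_v + \kappa s'^{R}\xi_{J_v,v}\big)$ plus an error term of relative size $O(\epsilon)$. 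Thus, away from an event of probability $o(1)$, $\tau_{J_v,v} = \sgn\big(M_v + \kappa {s'}^R \xi_{J_v,v} + (\text{error of size } O(\epsilon s'^R)|S_\ast|\cdot\text{const})\big)$.

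The remaining point is an anticoncentration statement: the argument $M_v + \kappa {s'}^R \xi$ of $\mathcal{A}_{J_v,v}$ is unlikely to fall within $O(\epsilon s'^R)$ of $0$. This is where the extra randomization by $\xi_{J_v,v}$, uniform on $[-1,1]$, earns its keep. Conditioned on $M_v$, the quantity $M_v + \kappa s'^R\xi$ has a density bounded by $\frac{1}{2\kappa s'^R}$, so the probability that it lies in any interval of length $c\epsilon s'^R$ is at most $\frac{c\epsilon}{2\kappa} = O(\epsilon)$; since $\epsilon$ was chosen small (depending on $\lambda$) before $\kappa$, this is as small as we like. Using the tree coupling of Lemma~\ref{l:treeCoupling} to replace $M_v = \sum_{w \in S_v}\sigma_w$ by $\Psi_{R}$ for the branching process — valid since $|B_R(v)| \le d^{2R} = n^{o(1)}$ a.a.s.\ and $\Psi_R$ has $\E\Psi_R^2 = O(s'^{2R})$ — we get that $\Psi_R / (\kappa s'^R)$ is a genuine random variable independent of $\xi$, and the anticoncentration bound follows. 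Combining: for a uniformly random $v$, conditioned on $J_v \ne 0$, we have $\mathcal{A}_{J_v,v} = \tau_{J_v,v}$ with probability $1 - O(\epsilon) - o(1)$, and since $\epsilon$ can be taken arbitrarily small the conditional expectation of the product $\mathcal{A}_{J_v,v}\tau_{J_v,v}$ tends to $1$; multiplying by $\P[J_v \ne 0] \to 1$ and averaging gives \eqref{eq:lAB}.

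The main obstacle is bookkeeping rather than conceptual: one must verify that all the conditioning events used above — $|S_\ast| = \lceil\sqrt{\log\log n}\rceil$, the concentration of $M_\ast$, $|B_R(v)| \le d^{2R}$, $J_v \ne 0$, the high-probability event inside Lemma~\ref{l:MainN}, and the tree-coupling success of Lemma~\ref{l:treeCoupling} — all hold simultaneously with probability $1 - o(1)$, so that the small-disagreement bound survives the averaging over $v \in V'$; and one must be careful that the $o(1)$ and $O(\epsilon)$ error terms are uniform in $v$, which they are because all the relevant bounds in Theorem~\ref{thm:path-weights-main} and Lemmas~\ref{lem:self-avoiding}--\ref{l:MainN} are stated uniformly over the choice of vertices. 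A secondary subtlety is that the $\xi_{j,v}$ appearing in $\tau_{j,v}$ and in $\mathcal{A}_{j,v}$ must be literally the same random variable for the cancellation in the two $\sgn$ arguments to work; this is built into the algorithm's definition, so one just needs to track it through the reductions.
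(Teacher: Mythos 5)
Your argument is correct and follows essentially the same route as the paper: both reduce the disagreement event $\{\mathcal{A}_{J_v,v}\ne\tau_{J_v,v}\}$ to the independent uniform randomizer $\xi_{J_v,v}$ landing in an interval whose width, controlled via Lemma~\ref{l:MainN} and the concentration of $M_\ast$ around $\tfrac{s'}{d'}|S_\ast|$, vanishes in probability, and you make the anticoncentration step (bounded density of $M_v+\kappa s'^R\xi$) slightly more explicit than the paper does. The only small imprecision is the remark about ``$\epsilon$ chosen small before $\kappa$'': the $\epsilon$ of Lemma~\ref{l:MainN} is an arbitrary free parameter unrelated to the fixed $\kappa$ from the branching-process lemma, so one simply sends it to zero after taking $n\to\infty$.
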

\begin{proof}
By Lemma~\ref{l:JvBound}, the probability of $J_v = 0$ goes to zero,
and therefore
Lemma~\ref{l:MainN} implies that
 \begin{equation}\label{e:NBoundA}
   \P\left[\{J_v = 0 \} \cup \left\{ J_v \ne 0 \text{ and } \left|\sum_{u \in S_\ast, u' \in S_v}  N^{(k,J_v)}_{u,u'} - \frac{{s'}^{k} }{n}M_v M_\ast \right| > \frac{\epsilon s^{'k +R}}{n} |S_\ast|
 \right\} \right] \to 0
\end{equation}

The event $\mathcal{A}_{J_v,v} \neq \tau_{J_v,v}$ is equivalent to $\xi_{J_v,v}$ falling outside the interval with end-points
$-M_v/(\kappa s'^R)$ and
\[
-\frac{d' n\sum_{u \in S_\ast, u' \in S_v}  N^{(k,J_v)}_{u,u'}}{|S_\ast| \kappa s'^{k+R+1}}.
\]

Since with high probability $M_\ast$ is concentrated around $\frac{s'}{d} |S_\ast|$, Lemma~\ref{l:MainN} implies the latter end point converges in probability to
\[
-\frac{ d' n \frac{s'^k}{n} \frac{s'}{d'} |S_\ast| M_v}{\kappa \frac{s^{'k+R}}{n} \frac{s'}{d'} |S_\ast|} = -\frac{M_v}{\kappa s'^R}.
\]
Therefore the probability that $-\xi_{J_v,v}$ falls in the interval converges to $0$ as needed.
\end{proof}

We can now complete the proof of Theorem~\ref{thm:alg}.

\begin{proof}[Proof of Theorem~\ref{thm:alg}]
\noindent Combining Lemmas~\ref{l:compAandAtilde}, \ref{l:compAandB} and~\ref{l:JvBound} we have that with high probability
\[
\sum_{v\in V} \tau(v) \sigma(v) \geq \epsilon n.
\]
yielding an algorithm recovering the a constant correlation with the true partition.  The running time bound was proved in
Section~\ref{s:NCalculation}.
\end{proof}

The proof of Theorem~\ref{thm:main-accurate} (i.e., when we are far
above the threshold) is rather easier, and doesn't require
the branching process tools:
\begin{proof}[Proof of Theorem~\ref{thm:main-accurate}]
  We consider a simplified version of the main algorithm, with
$R = 0$ and $\kappa = 0$ (so that $S_v = \{v\}$). (In fact, the algorithm
as stated also works, but the analysis is more tedious, since it requires
reproving Lemmas~\ref{l:treeCoupling} and~\ref{l:compAandAtilde}
with accuracy going to one.)
Theorem~\ref{thm:path-weights-main} implies that
$N_{u,v}^{(k,j)} = (1 + o(1)) \sigma_u \sigma_v s^k/n$ with probability
tending to 1. Together with Lemma~\ref{l:JvBound},
this implies that $\P[\sigma_v = \tau_v] \to 1$.
\end{proof}

\section{Combinatorial path bounds}
A crucial ingredient in the proof is obtaining bounds on the number of various types of paths (in the complete graph) in terms of how much they self-intersect, either  by intersecting a previous vertex on the path or by repeating an edge of the path.

\begin{definition}\label{def:new-old-returning}
Given a path $\gamma = (v_1, \dots, v_k)$, we say that an edge $(v_i, v_{i+1})$
\begin{itemize}
\item
is \emph{new} if for all $j \le i$, $v_j \ne v_{i+1}$
\item
is \emph{old} if there is some $j < i$ such that $\{v_i, v_{i+1}\} = \{v_j, v_{j+1}\}$.
\item
  Otherwise, we say that $(v_i, v_{i+1})$ is \emph{returning} (in this case $v_{i+1} = v_j$ for $j < i$ but $\{v_{i},v_{i+1}\}$ is not one
of the previous edges).
\end{itemize}
Let $k_n(\gamma), k_o(\gamma)$ and $k_r(\gamma)$ be the number of
new, old, and returning edges respectively.
\end{definition}

\begin{definition}
We say that a path $\gamma$ is \emph{$\ell$-tangle-free} if
the graph $(V(\gamma), E(\gamma))$ is $\ell$-tangle-free.
\end{definition}

For the rest of this subsection we fix $\alpha$
and set $k = \lceil \alpha \log n \rceil$.
Note that for every new edge in a path, the number of distinct vertices in the path increases by one,
as does the number of distinct edges. For a returning edge, only the number of edges increases,
while for an old edge, neither increases. Therefore we easily see that:

\begin{claim} \label{claim:edge_vertex_count}
The number of vertices visited by the path $\gamma$ is
$k_n(\gamma) + 1$ and the number of edges is $k_n(\gamma) + k_r(\gamma)$.
\end{claim}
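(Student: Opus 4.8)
The statement to prove, Claim~\ref{claim:edge_vertex_count}, is almost immediate from the definitions, so the plan is a short bookkeeping induction on the length of the path.

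\medskip

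The plan is to induct on the number of edges $k$ of the path $\gamma = (v_1, \dots, v_{k+1})$. For the base case, a path with no edges is a single vertex $v_1$, so it visits $1 = k_n + 1$ vertex (since $k_n = 0$) and $0 = k_n + k_r$ edges. For the inductive step, write $\gamma'$ for the path $(v_1, \dots, v_k)$ obtained by deleting the last vertex, and classify the final edge $(v_k, v_{k+1})$ according to Definition~\ref{def:new-old-returning}. If the edge is \emph{new}, then $v_{k+1}$ is a vertex not among $v_1, \dots, v_k$, and $\{v_k, v_{k+1}\}$ is an edge not among the edges of $\gamma'$; so appending it increases the number of distinct vertices by one and the number of distinct edges by one, while $k_n$ increases by one and $k_r$ stays the same. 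If the edge is \emph{returning}, then $v_{k+1} = v_j$ for some $j < k$ (so no new vertex appears) but $\{v_k, v_{k+1}\}$ is not one of the earlier edges (so one new edge appears); correspondingly $k_r$ increases by one and $k_n$ is unchanged. If the edge is \emph{old}, then $\{v_k, v_{k+1}\}$ coincides with an earlier edge $\{v_j, v_{j+1}\}$, so in particular $v_{k+1} \in \{v_j, v_{j+1}\} \subseteq V(\gamma')$ and no new vertex or edge appears; both $k_n$ and $k_r$ are unchanged. In every case, ``number of distinct vertices minus $(k_n+1)$'' and ``number of distinct edges minus $(k_n + k_r)$'' are preserved, so the claim follows by induction.

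\medskip

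The only subtlety worth a remark is the ``old'' case: one must check that an old edge really does not contribute a new vertex. This holds because if $\{v_k,v_{k+1}\}=\{v_j,v_{j+1}\}$ as unordered pairs, then $v_{k+1}$ equals either $v_j$ or $v_{j+1}$, both of which already occur in $\gamma'$; note that $v_{k+1}=v_k$ is impossible since $\gamma$ is a path. There is no real obstacle here — the statement is purely a consequence of the case analysis already built into Definition~\ref{def:new-old-returning}, and the three cases are exhaustive and mutually exclusive by construction.
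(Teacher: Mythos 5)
Your proof is correct and takes essentially the same approach as the paper: the paper gives the same case analysis (new edges add one vertex and one edge, returning edges add one edge, old edges add neither) as a brief observation just before the claim, and you have simply formalized it as an explicit induction on the path length.
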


Our first bound is a fairly crude one that will allow us to assume
that $k_r$ is smaller than some constant.
Note that there is no non-backtracking restriction yet.

\begin{lemma}\label{lem:constant-returns}
  For any constant $C$, if $k_r \ge 1$ and $n$ is sufficiently large then
  there are at most
 \[
   n^{k_n + k_r/2 + C \log(2e k_r)}
 \]
 paths $\gamma$ of length at most $C \log n$, with a
 fixed starting and ending point, and satisfying $k_n(\gamma) = k_n$
 and $k_r(\gamma) = k_r$.
\end{lemma}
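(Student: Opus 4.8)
The plan is to bound the number of such paths by building the path one step at a time and counting, at each step, how many choices there are for the next vertex, with the count depending on whether the new edge is new, old, or returning. First I would set $\gamma = (v_0=u, v_1, \dots, v_t = v)$ with $t \le C\log n$, where $u,v$ are the fixed endpoints, and I would process the edges $(v_{i-1},v_i)$ in order. By Claim~\ref{claim:edge_vertex_count}, exactly $k_n$ of these edges are new and exactly $k_r$ are returning, and the rest are old. Each new edge introduces a genuinely new vertex, so the total number of ways to make all the new-edge choices is at most $n^{k_n}$ — in fact a little less if we are careful at the endpoint, but $n^{k_n}$ is a safe bound. The content of the lemma is controlling the old and returning edges, whose endpoints must be among the vertices already visited.

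The key observation is that when the $i$-th edge is old or returning, the next vertex $v_i$ is one of the at most $i+1 \le C\log n + 1$ already-visited vertices, so there are at most $C\log n + 1$ choices for it; but this crude bound, applied to all $k_o + k_r$ such edges, would cost $(C\log n)^{k_o + k_r}$, which is too large since $k_o$ is unbounded. To fix this, I would distinguish: for a returning edge we pay a factor $C\log n + 1$ (there are only $k_r$ of these, so the total cost is $(C\log n + 1)^{k_r}$), whereas for an old edge the situation is more constrained because the edge $\{v_{i-1},v_i\}$ must equal a previously used edge, so $v_i$ is determined by which previous edge incident to $v_{i-1}$ we reuse — and there are at most $\deg_\gamma(v_{i-1}) \le k_n + k_r$ such edges. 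Actually the cleanest route is: the sequence of "return points" (the times at which a returning or old edge is traversed, together with the identity of the vertex returned to among the earlier ones) can be encoded by first choosing which of the $t$ positions are old/returning — at most $\binom{t}{k_o + k_r}$ ways — and then, for each such position, choosing the target vertex from the at most $k_n + 1$ vertices visited so far. Using $t \le C\log n$ and the bound $k_o + k_r \le $ (number of repeated edges) one gets a factor of the form $\binom{C\log n}{k_o+k_r}(k_n+1)^{k_o+k_r}$, and one then checks this is at most $n^{k_r/2 + C\log(2ek_r)}$; the exponent $C\log(2ek_r)$ is exactly what is needed to absorb a factor like $(C\log n)^{O(k_r)}$ together with the binomial coefficient, using $k_r \ge 1$ and the elementary inequality $\binom{m}{j} \le (em/j)^j$.

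The main obstacle, and the step requiring the most care, is the bookkeeping that converts the "choose positions, then choose return targets" count into the clean closed form $n^{k_n + k_r/2 + C\log(2ek_r)}$ — in particular, arguing that the $n^{k_r/2}$ factor (rather than $n^{k_r}$) suffices. The point is that an old edge contributes nothing to either the vertex count or (as a distinct edge) the edge count, so it is "free" up to the choice of which earlier edge it reuses; and a returning edge, while it does add to the edge count, returns to an already-visited vertex, and one can amortize its cost against the new edges by noting the graph $(V(\gamma),E(\gamma))$ has $k_n+1$ vertices and $k_n + k_r$ edges, so it contains at least $k_r$ independent cycles, which limits how the returning edges can be placed. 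I would make this precise by a direct induction on the path length: maintaining the invariant that a prefix with $k_n'$ new edges, $k_r'$ returning edges and $k_o'$ old edges has at most $n^{k_n' + k_r'/2 + C\log(2e(k_r'+1))}$ extensions consistent with the final counts, and checking the inductive step case by case on the type of the next edge — the returning case being where the extra $n^{1/2}$ per edge gets spent, absorbed into the slowly growing $C\log(2ek_r)$ term by the sufficiently-large-$n$ hypothesis. The non-backtracking restriction plays no role here, consistent with the remark in the statement.
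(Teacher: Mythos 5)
Your proposal follows the same broad outline as the paper's proof — build $\gamma$ one step at a time, fix the edge types in advance, and charge each step according to its type — but it loses the one ingredient that makes the count work, and the bound you claim to ``check'' at the end is in fact false.

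The issue is the charge for old edges. You correctly observe that the crude bound of roughly $\log n$ choices per old/returning step is untenable because $k_o$ can be as large as $k = \Theta(\log n)$, and you briefly note that for an old step the number of choices is at most $\deg_\gamma(v_{i-1})$. But you then bound that degree by $k_n + k_r$, and in your ``cleanest route'' you abandon the degree observation entirely and charge each old/returning step a factor of $k_n + 1$, arriving at
\[
\binom{C\log n}{\,k_o + k_r\,}\,(k_n+1)^{k_o+k_r}.
\]
This quantity is \emph{not} bounded by $n^{k_r/2 + C\log(2ek_r)}$. For instance take $k = \lceil C\log n\rceil$, $k_r = 1$, and $k_n \approx k_o \approx k/2$: then $(k_n+1)^{k_o+k_r} \approx (C\log n / 2)^{C\log n / 2} = n^{\Theta(\log\log n)}$, which is super-polynomial, whereas the right-hand side $n^{1/2 + C\log(2e)}$ is a fixed power of $n$. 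So the ``one then checks'' step fails, and it fails badly; the factor $C\log(2ek_r)$ in the exponent can absorb $(C\log n)^{O(k_r)}$ as you say, but it cannot absorb $(C\log n)^{\Omega(\log n)}$.

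The key idea in the paper, which your argument is missing, is that the per-step charge for an old edge is $O(k_r)$, \emph{not} $O(k_n)$ or $O(k)$: when $(v_i,v_{i+1})$ is old, $v_{i+1}$ must be one of the already-created edges incident to $v_i$, and the maximum degree of $(V(\gamma),E(\gamma))$ is bounded by a linear function of $k_r$ alone (each returning edge raises the degree of at most two vertices). With the charge $O(k_r)$ per old step, one has $(O(k_r))^{k_o} \le (O(k_r))^{k} = n^{O(\log k_r)}$, which is exactly the shape of the extra $n^{C\log(2ek_r)}$ term. The paper then combines this with a multinomial factor $\binom{k}{k_n\,k_o\,k_r}$ and the elementary inequality $x^j/j! \le (ex/j)^j$ to get the clean closed form. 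Your final paragraph gestures at amortizing via the cycle rank and at an induction, but as stated neither yields the needed $O(k_r)$ bound on old-edge choices; the inductive invariant you propose would already be violated at the step where the super-polynomial factor above arises.
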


The point of Lemma~\ref{lem:constant-returns} is that it implies that paths with large $k_r$
are so rare that they do not contribute any weight. Indeed, for some
$\alpha$ to be determined choose $k^*$ large enough
(depending on $\alpha$) so that
\begin{equation} \label{eq:k*}
 4 \alpha \log(2d) + 4 \alpha \log(2 e k^*) - k^*/2 < -4.
\end{equation}
It then follows from Lemma~\ref{lem:constant-returns}
that if $\Gamma$ is the collection of all paths of length at most $4\alpha \log n$
with $k_r(\gamma) \ge k^*$
then
\begin{align*}
 \sum_{\gamma \in \Gamma} \left(\frac{2d}{n}\right)^{k_n(\gamma) + k_r(\gamma)}
 &\le \sum_{k_r \ge k^*} (2d)^{4\alpha \log n} n^{4 \alpha \log (2ek_r) - k_r/2} \\
 &= \sum_{k_r \ge k^*} n^{4 \alpha \log (2d) + 4 \alpha \log (2ek_r) - k_r/2} \\
 &= n^{-4 + o(1)}.
\end{align*}

Note that for any path $\gamma$ and any labelling $\sigma$,
\[
 |\E [X_\gamma \mid \sigma]| \le \left(\frac{2d}{n}\right)^{k_r(\gamma) + k_n(\gamma)}
\]
(since $k_r(\gamma) + k_n(\gamma)$ is the number of edges in $\gamma$). 
Hence, we have:

\begin{corollary}\label{cor:X-many-returns}
Let $k^* = k^*(\alpha, d)$ be defined in~(\ref{eq:k*}). Then
\[
 \sum_{\gamma} | \E [X_\gamma \mid \sigma] |
 \le n^{-4 + o(1)},
\]
where the sum ranges over $\gamma$ of length at most $4\alpha \log n$
and with $k_r(\gamma) \ge k^*$.
\end{corollary}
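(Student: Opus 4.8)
The plan is to obtain Corollary~\ref{cor:X-many-returns} directly from the summation bound displayed just above its statement, after verifying the one pointwise estimate that that display relies on. Concretely, I would first prove that $|\E[X_\gamma \mid \sigma]| \le (2d/n)^{k_n(\gamma) + k_r(\gamma)}$ for every path $\gamma$ and every labelling $\sigma$, and then sum this bound over the relevant family of paths.

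For the pointwise estimate, fix $\gamma$ and $\sigma$ and write $X_\gamma = \prod_{e \in E(\gamma)} W_e^{m_e}$, where $m_e \ge 1$ is the number of times $\gamma$ traverses the edge $e$. Conditioned on $\sigma$ the edge indicators $\1_{\{e \in E(G)\}}$ are mutually independent, each present with probability $q_e \in \{a/n, b/n\}$, so $\E[X_\gamma \mid \sigma] = \prod_{e \in E(\gamma)} \E[W_e^{m_e} \mid \sigma]$. When $m_e = 1$ we have $|\E[W_e \mid \sigma]| = |q_e - d/n| = |s|/n \le 2d/n$; when $m_e \ge 2$, since $|W_e| \le 1$ we get $|\E[W_e^{m_e} \mid \sigma]| \le \E[W_e^2 \mid \sigma] = q_e(1 - q_e) \le q_e \le 2d/n$. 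Multiplying over the distinct edges of $\gamma$ and using Claim~\ref{claim:edge_vertex_count} to identify $|E(\gamma)| = k_n(\gamma) + k_r(\gamma)$ gives the bound.

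With this in hand the corollary is immediate: $\sum_\gamma |\E[X_\gamma \mid \sigma]| \le \sum_\gamma (2d/n)^{k_n(\gamma) + k_r(\gamma)}$, where the sum ranges over exactly the paths in the corollary, and the right-hand side is precisely the quantity shown to be $n^{-4+o(1)}$ in the display preceding the statement. For completeness I would recall the mechanism of that display: Lemma~\ref{lem:constant-returns} with $C = 4\alpha$ bounds the number of admissible $\gamma$ with prescribed $k_n$ and $k_r \ge k^*$ by $n^{k_n + k_r/2 + 4\alpha \log(2ek_r)}$; multiplying by $(2d/n)^{k_n + k_r}$ and using the length bound $k_n + k_r \le 4\alpha \log n$ cancels the factor $n^{k_n}$ and replaces $(2d)^{k_n + k_r}$ by $n^{4\alpha \log(2d)}$; summing over the $n^{o(1)}$ admissible values of $k_n$ and over $k_r \ge k^*$ leaves $n^{o(1)} \sum_{k_r \ge k^*} n^{4\alpha \log(2d) + 4\alpha \log(2ek_r) - k_r/2}$, and by~\eqref{eq:k*} this exponent is below $-4$ at $k_r = k^*$ and (since~\eqref{eq:k*} forces $k^* > 8\alpha$) strictly decreasing in $k_r$ thereafter, so the series is geometrically dominated by its leading term and the total is $n^{-4+o(1)}$.

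I do not expect a genuine obstacle here; this is a bookkeeping corollary of Lemma~\ref{lem:constant-returns}. The only two points to handle with a little care are bounding the higher edge moments $\E[W_e^{m_e}\mid\sigma]$ by the variance via $|W_e| \le 1$, and noting that the length constraint is exactly what makes the $n^{k_n}$ factor cancel, so that the decay is governed by $k_r$ alone.
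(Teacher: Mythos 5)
Your proposal is correct and takes exactly the paper's route: the paper likewise deduces the corollary from the pointwise bound $|\E[X_\gamma\mid\sigma]|\le(2d/n)^{k_n(\gamma)+k_r(\gamma)}$ (which it states with no more detail than you supply) together with the displayed sum obtained from Lemma~\ref{lem:constant-returns} and the choice of $k^*$ in~\eqref{eq:k*}. One cosmetic slip in your pointwise estimate: since $W_e$ is centered at $d/n$ rather than at its conditional mean $q_e$, one has $\E[W_e^2\mid\sigma]=q_e(1-d/n)^2+(1-q_e)(d/n)^2=q_e(1-q_e)+s^2/n^2$ rather than $q_e(1-q_e)$, but the extra $s^2/n^2$ is negligible under Assumption~\ref{ass} and the bound $2d/n$ per edge still holds, so the argument is unaffected.
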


\begin{proof}[Proof of Lemma~\ref{lem:constant-returns}]
  Consider paths of fixed length $k$; later, we will sum over all
  $k \le C \log n$.
Suppose that for all $i$, we decide in advance whether $(v_i, v_{i+1})$ will be new, old, or returning.
There are at most $\binom{k}{k_n\ k_o\ k_r}$ ways to make this choice.
Fix an $i$ and suppose that $v_i$ has already been determined.
If $(v_i, v_{i+1})$ is new then there are at most $n$ choices for $v_{i+1}$.
If $(v_i, v_{i+1})$ is returning then there are at most $|V(\gamma)| = k_n + 1 \le k$ choices for $v_{i+1}$.
Otherwise, $(v_i,v_{i+1})$ is an old edge, and there are at most
$k_r + 2$ choices for $v_{i+1}$ because $k_r+2$ bounds the maximum degree
of the final path.
Hence, the total number of choices is at most
\begin{align*}
 \binom{k}{k_n\ k_o\ k_r} n^{k_n} k^{k_r} (k_r + 2)^{k_o}
 &\le \frac{k^{k_o + k_r}}{k_o! k_r!} n^{k_n} k^{k_r} (2k_r)^{k_o} \\
 &= n^{k_n} \frac{k^{2 k_r}}{k_r!} \frac{(2 k k_r)^{k_o}}{k_o!} \\
 &\le n^{k_n} \left(\frac{e k^2}{k_r}\right)^{k_r}
 \left(\frac{2e k k_r}{k_o}\right)^{k_o} \\
 &= n^{k_n + k_r} \left(\frac{e k^2}{n k_r}\right)^{k_r}
 \left(\frac{2e k k_r}{k_o}\right)^{k_o}.
\end{align*}
(In the case $k_o = 0$, we adopt the convention $(y/0)^0 = 1$.)
Now, the quantity $(y/x)^x$ is increasing in $x$ as long as $x \le y/e$.
Applying this with $y = 2e k k_r$ and the values $x = k_o \leq k \leq y/e$
 we  have
\[
 \left(\frac{2e k k_r}{k_o}\right)^{k_o}
 \le (2e k_r)^k \le (2ek_r)^{C \log n} = n^{C \log(2e k_r)}.
 \]
 On the other hand, $e k^2 / (n k_r) \le n^{-2/3}$
 for sufficiently large $n$.
 Hence, the total number of paths of length $k$ is at most
 \[
   n^{k_n + k_r} n^{-2k_r/3} n^{C \log (2e k_r)}.
 \]
 Summing over $k \le C \log n$ introduces an extra factor of
 $C \log n$, but this factor is cancelled out by $n^{-k_r/6}$
 for sufficiently large $n$.
\end{proof}

The bounds of Lemma~\ref{lem:constant-returns} are not accurate when
$k_r$ is small. Essentially, we require bounds of $n^{k_n-1 + o(1)}$
in order to make the rest of our argument work (certainly, we can't expect
any better bounds, since every new edge but the last one has almost
$n$ choices). In order to achieve
this bound, we need to introduce extra structure into our paths:
they need to be non-backtracking and without many tangles.

\begin{definition}
  Consider the path $\gamma$ as a multigraph (i.e.\ each edge has a
  multiplicity according to the number of times $\gamma$ crosses it).
  We say that a path has $t$ $\ell$-tangles if $t$ is the minimal
  number of edges (counting multiplicity) that need to be deleted from $\gamma$
  in order to make it $\ell$-tangle-free.
\end{definition}

\begin{lemma}
  \label{lem:few-tangles}
If $k_r \ge 1$ then there are at most
\[
 k^{5 k_r + 4 k_r k/\ell + 8 k_r t} n^{k_n-1}
\]
paths with $t$ $\ell$-tangles that have a fixed starting and ending point,
and that satisfy $k_n(\gamma) = k_n$ and $k_r(\gamma) = k_r$.
\end{lemma}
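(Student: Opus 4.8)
The plan is to refine the counting argument from Lemma~\ref{lem:constant-returns} by exploiting the non-backtracking and tangle structure to save a factor of roughly $n$ over the crude bound, at the cost of introducing powers of $k$ (which are only $n^{o(1)}$). As before, I would fix the length $k$ and decide in advance for each step whether the edge $(v_i,v_{i+1})$ is \emph{new}, \emph{old}, or \emph{returning}; there are at most $\binom{k}{k_n\ k_o\ k_r}$ such choices, but now I will be more careful about how many choices there are for $v_{i+1}$ when the edge is \emph{not} new. The key observation is that the subgraph $(V(\gamma),E(\gamma))$ has at most $k_n+1$ vertices and, crucially, only $k_r$ more edges than a spanning tree would have (by Claim~\ref{claim:edge_vertex_count}, it has $k_n+k_r$ edges on $k_n+1$ vertices). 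Deleting $k_r$ appropriately chosen edges makes it a forest; deleting $t$ more makes every $\ell$-neighborhood a forest. I would use this to argue that, outside of a bounded number of ``special'' steps, the walk is essentially forced once we know which vertex it returns to.

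The main steps: (1) Partition the non-new steps. Each \emph{old} edge re-traverses a previously used edge; since the path is non-backtracking, once we are at $v_i$ and know the step is old and not the immediate reversal, the number of choices for $v_{i+1}$ is bounded by the degree of $v_i$ in the path graph, which I would bound by something like $k_r+t+2$ using the tangle-freeness and the edge-excess count — actually, to get clean powers of $k$, bound it simply by $k$. (2) For \emph{returning} edges there are at most $|V(\gamma)|\le k$ choices. (3) The subtle part is \emph{new} edges that occur \emph{after} a return: after the walk returns to an old vertex $v_j$, it may travel along a previously-unexplored portion; but within a stretch of consecutive new edges the walk is injective, and because $\gamma$ has only $t$ $\ell$-tangles, long stretches of new edges inside a small-diameter region must eventually leave that region or terminate, so the number of ``return events followed by an excursion'' is controlled by $k_r$ and $t$. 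I would organize the walk into at most $O(k_r + t)$ maximal blocks, where inside each block things are tree-like (so the total multiplicative overhead from choosing where blocks start/end and how they connect is $k^{O(k_r + k_r t)}$, with an extra $k^{O(k_r k/\ell)}$ coming from the fact that each tree-like excursion of length exceeding $\ell$ forces a structural constraint that costs a factor of $k$ per length-$\ell$ segment). (4) Finally, since each new edge but the last one has at most $n$ choices but the endpoints are fixed, and there are $k_n+1$ vertices total with two of them prescribed, the genuine $n$-factor is $n^{k_n-1}$; everything else gets absorbed into the stated power $k^{5k_r + 4k_r k/\ell + 8 k_r t}$. Summing over $k\le \lceil\alpha\log n\rceil$ contributes only a further $\log n$ factor, negligible against the $k^{k_r}$ saving (as $k_r\ge 1$).

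The hard part will be step (3): correctly accounting for the interleaving of new edges with returns without either over- or under-counting, and in particular pinning down why the tangle count $t$ and the ratio $k/\ell$ enter with exactly the claimed exponents. The cleanest route I can see is to process $\gamma$ edge-by-edge and maintain the invariant that the revealed graph has excess (edges minus vertices plus components) equal to the number of returning edges seen so far; a new edge after a return is ``cheap'' (at most $n$ choices, as for any new edge) but the \emph{location} in the walk where the excursion begins is one of at most $k$ positions and must be recorded, and the tangle bound limits how many times a new excursion can re-enter an already-dense region before creating a forbidden $\ell$-tangle, forcing a deletion charged to $t$. Making this bookkeeping rigorous — choosing the right potential function and showing the number of ``decision points'' is $O(k_r + k_r t + k_r k/\ell)$ — is where essentially all the work lies; once the combinatorial skeleton is fixed, the vertex-choice bounds from steps (1)–(2) are routine and give the stated estimate.
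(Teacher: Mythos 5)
The outline correctly identifies the target shape of the bound ($n^{k_n-1}$ times a correction $k^{O(k_r + k_r t + k_r k/\ell)}$), but it misidentifies where the difficulty lives and, by your own admission, the core bookkeeping is not actually carried out. You flag ``new edges after a return'' as the subtle part, but new edges are never the issue: once the positions in the walk where new stretches begin are fixed (costing only $k^{O(k_r)}$, since each new stretch starts just after a returning edge), each new edge has at most $n$ choices for its endpoint, and that is exactly what yields $n^{k_n-1}$. The genuine difficulty is \emph{old} edges at vertices of degree $\ge 3$ in $\gamma$: non-backtracking forces the next old step at degree-$2$ vertices, but at a branch vertex $v$ an old step can go to any of up to $d(v)$ neighbors, and this choice can recur up to $m(v)$ times. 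Naively that gives $\prod_{v\in\Vints} m(v)^{m(v)}$, far too large, and your sketch does not say how to beat it.

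The mechanism the paper uses --- and which is absent from your plan --- is to fix a minimal edge set $T$ of size $\le t$ whose removal makes $\gamma$ $\ell$-tangle-free, and to classify each neighbor of a degree-$\ge 3$ vertex $v$ as \emph{short} (on a $\le 2\ell$-cycle of $\gamma\setminus T$), \emph{tangled} (on a $\le 2\ell$-cycle of $\gamma$ but not of $\gamma\setminus T$), or \emph{long}. Tangle-freeness of $\gamma\setminus T$ forces each such $v$ to have at most two short neighbors, so an old-to-old passage through $v$ between two short neighbors is \emph{forced}; arrivals/departures via long neighbors are $\le k/(2\ell)$ each (one must wander $2\ell$ steps away), and via tangled neighbors $\le t$ each (one must cross $T$). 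This is precisely how $k/\ell$ and $t$ enter the exponent, giving $\bigl(\prod_{v\in\Vints} m(v)\bigr)^{2k/\ell+4t+2}$, which AM-GM reduces to $k^{2k_r(2k/\ell+4t+2)}$. Without this short/tangled/long classification (or some substitute with the same quantitative punch), the ``potential function'' and ``block'' structure you invoke remain hopes rather than a proof; you would at minimum need to state and prove the analogue of the at-most-two-short-neighbors fact, which is the real content of the lemma.
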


In order to see the use of Lemma~\ref{lem:few-tangles}, note
that if $k = O(\log n)$, $k_r = O(1)$, and $\ell = \omega(\log \log n)$ then
the bound in Lemma~\ref{lem:few-tangles} is of the order
$k^{O(t)} n^{k_n - 1 + o(1)}$. We will argue later that having many
tangles results in a small path weight with high probability,
and so our bound is effectively of the order $n^{k_n - 1 + o(1)}$.

\begin{proof}
First, note that if we
specify which edges are returning and we also specify the first new
edge after each returning edge, then we have also determined
which edges are old
(because every edge after a new edge but before the next returning edge is
new). Therefore, the number of ways to specify
which edges are old, new, or returning is at most $k^{2k_r}$.
Then there are at most $k^{k_r}$ ways to choose the
returning edges and at most $n^{k_n-1}$ ways to choose the
new edges (since one of them must hit the final vertex, so it has
no choices). So far, we have made at most
$k^{3k_r} n^{k_n-1}$ choices, and these choices
determine the edges traversed by $\gamma$.

Having fixed the edges traversed by $\gamma$,
we will now count old edges.
We denote by $d(v) = d(v, \gamma)$ the degree of $v$ in $\gamma$:
that is, the number of $w \in \gamma$
such that $\{w, v\} \in E(\gamma)$. Let $\Vints$ be the set of vertices
with degree at least 3. Note that because $\gamma$ is non-backtracking,
if an edge $(v_i, v_{i+1})$ is old, then $v_{i+1}$ is already
determined by the path up to $v_i$ unless $v_i \in \Vints$.

Let $\calT$ be the collection of minimal sets $T \subset E(\gamma)$
such that $E(\gamma) \setminus T$ has no $\ell$-tangles. Given $T \in \calT$,
we say that a neighbor $w$ of $v \in \Vints$ is
\emph{short} if there is a cycle in $E(\gamma) \setminus T$ that contains the
edge $\{v, w\}$
and has length at most $2\ell$; we say that $w$ is \emph{tangled}
if it is not short, but there is a cycle in $\gamma$ that
contains the edge $\{v, w\}$ and has length at most $2\ell$;
otherwise we say that $w$ is \emph{long}.
Note that the set of long neighbors is independent of $T$, and
that every $v \in \Vints$ has at most two short neighbors. Now we change the
order: for every $v \in \Vints$ choose up to two of its neighbors to be
short, and say that $T \in \calT$ is compatible with this choice if the
chosen short and tangled neighbors agree with the definition above.
Note that there are at most $\prod_{d \in \Vints} d(v)^2$ ways to choose the
collection of all short neighbors for all $v \in \Vints$.
For a choice of short neighbors, we say that $\gamma$ has $t$
\emph{$\ell$-tangles
with respect to this choice} if there exists some $T$ compatible
with the choice of short neighbors that is crossed at most $t$ times.
Note that if $\gamma$ has $t$ $\ell$-tangles then there is some
choice of short neighbors such that $\gamma$ has $t$ $\ell$-tangles
with respect to that choice.

Now fix a choice of short edges for every $v \in \Vints$; we will bound
the number of $\gamma$ that have $t$ $\ell$-tangles with respect to
this choice.
For a given $v \in \Vints$, let $m(v)$ be the number of times that $v$ was visited.
Let $\longIn(v)$ be the number of times that $v$
was visited from a long or tangled neighbor and
$\longOut(v)$ be the number
of the times we move to a long or tangled neighbor.
Note that when $\gamma$ arrives at $v$ on an old edge from a short vertex and leaves on an old edge to another short vertex, then the edge leaving $v$ is determined by the fact that $v$ has exactly two short neighbors.
At all other times that $\gamma$ leaves $v$ on an old edge, there are at most $d(v)$ choices for the outgoing edge.
Thus the total number of ways to chose old edges starting at $v$ is bounded by
\[
m(v)^{\longOut(v)+\longIn(v)} d(v)^{\longIn(v) + \longOut(v)} \leq m(v)^{2\longOut(v)+2\longIn(v)},
\]
where $m(v)^{\longOut(v) + \longIn(v)}$ bounds the number of ways that we
can intersperse the short arrivals and departures among all visits to $v$,
and the second inequality follows from the fact that $d(v) \leq m(v)$.
Repeating this for
all $v \in \Vints$, we see that the number of ways to choose all the old edges
in $d$ is at most
\[
 \prod_{v \in \Vints}
 m(v)^{2 \longOut(v)+2 \longIn(v)}.
\]

Next, we use the tangle structure to bound $\longIn(v)$ and
$\longOut(v)$. Indeed, after leaving $v$ via a long neighbor, we must wait
at least $2\ell$ steps before visiting $v$ again; after leaving $v$
via a tangled neighbor, we must either wait at least $2\ell$ steps
before returning or else for every $T \in \calT$ that is compatible
with the choice of short neighbors, we must pass through an edge of $T$.
Hence, $\longOut(v) \le k / (2\ell) + t$.
A similar argument shows that $\longIn(v) \le k/(2\ell) + t$,
and hence the number of ways to choose the old edges is at most
\[
  \left(\prod_{v \in \Vints} m(v)\right)^{2k/\ell + 4t}.
\]

To put everything together, there were at most
$k^{3k_r} n^{k_n - 1}$ ways to fix the edge types and the edge
set of $\gamma$. Then there were at most $\prod_{v \in \Vints} d(v)^2 \le
\prod_{v \in \Vints} m(v)^2$ ways to choose the short neighbors. For
each such choice, there were at most 
$\big(\prod_{v \in \Vints} m(v)\big)^{2k/\ell + 4t}$ ways to choose
old edges such that the resulting path would have $t$ $\ell$-tangles
with respect to the choice of short neighbors. All together, this
gives at most
\begin{equation}\label{eq:few-tangles-1}
  k^{3 k_r} n^{k_n - 1} \left(\prod_{v \in \Vints} m(v)\right)^{2k/\ell + 2 + 4t}
\end{equation}
paths.

By the AM-GM inequality,
\[
 \prod_{v \in \Vints} m(v)
 \le \left(\frac{1}{|\Vints|} \sum_{v \in \Vints} m(v)\right)^{|\Vints|}
 \le \left(\frac{1}{|\Vints|} \sum_{v \in \Vints} m(v)\right)^{2k_r},
\]
where the second inequality follows because every time the walk returns to
its old path, it creates at most two vertices of degree higher than two
(one when
the walk returns, and one when it leaves again). Since $m(v) \le k$ for
every $v$, the quantity above is bounded by $k^{2k_r}$.
Plugging this back into~\eqref{eq:few-tangles-1}, we get the claimed bound.
\end{proof}

\subsection{Pairs of self-avoiding paths}

When we take second moments over various sums over paths, we will
end up having to control the number of pairs of paths with certain
properties. In what follows, we take two self-avoiding
paths, $\gamma_1$ and $\gamma_2$, of length $k$. We will refine
Definition~\ref{def:new-old-returning} by saying that
a (directed) edge $(u, v)$ of $\gamma_2$ is
\emph{new with respect to $\gamma_1$} if $v \not \in V(\gamma_1)$.
We say that $(u, v)$ is \emph{old with respect to $\gamma_1$}
if the (undirected) edge $\{u, v\}$
appears in $\gamma_1$. Otherwise, we say that
that $(u, v)$ is \emph{returning with respect to $\gamma_1$}.
We write $k_{n,\gamma_1}(\gamma_2)$, $k_{o,\gamma_1}(\gamma_2)$,
and $k_{r,\gamma_1}(\gamma_2)$ for the numbers of edges of these
three types in $\gamma_2$.

\begin{lemma}\label{lem:two-saw-paths}
  Fix vertices $u, u', v, v'$ (not necessarily distinct). There are at
  most
  \[
    2 (k+1) \binom{k}{k_{r,\gamma_1}} \binom{k}{k_{r,\gamma_1} + 1} (2k)^{k_{r,\gamma_1}} n^{k + k_{n,\gamma_1} - 1 - 1_{v' \not \in \{u, v\}}}
  \]
  pairs $(\gamma_1, \gamma_2)$ of length-$k$ self-avoiding paths where
  $\gamma_1$
  goes from $u$ to $v$, $\gamma_2$ goes from $u'$ to $v'$, and where
  $k_{n,\gamma_1}(\gamma_2) = k_{n,\gamma_1}$ and
  $k_{r,\gamma_1}(\gamma_2) = k_{r,\gamma_1}$.
\end{lemma}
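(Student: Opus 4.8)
Here is the plan. I would prove the bound by fixing $\gamma_1$ first and then bounding the number of $\gamma_2$ compatible with a given $\gamma_1$, using the classification of the edges of $\gamma_2$ as new / old / returning with respect to $\gamma_1$. We may assume $u\neq v$ and $u'\neq v'$, since otherwise there are no self-avoiding paths of length $k\geq 1$ and the bound holds vacuously. The conceptual heart of the argument is that, because $\gamma_1$ is a simple path and $\gamma_2$ is self-avoiding, old edges of $\gamma_2$ are almost free to traverse, so the bound is essentially $n^{k-1}$ (for $\gamma_1$) times $n^{k_{n,\gamma_1}}$ (for the new endpoints of $\gamma_2$) times a $\mathrm{poly}(k)^{\,k_{r,\gamma_1}}$ factor.

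First I would isolate two structural facts about old edges. Regarding $\gamma_1$ as a simple path, every vertex has at most two neighbours in $\gamma_1$; hence, since $\gamma_2$ is self-avoiding, inside any maximal run of consecutive old edges of $\gamma_2$ every step after the first is forced (one takes the $\gamma_1$-neighbour that is not the vertex just visited), so such a run contributes at most $2$ choices in total. Secondly, an old edge can never immediately follow a new edge, since a new edge ends at a vertex outside $V(\gamma_1)$ while an old edge must lie in $E(\gamma_1)$. Consequently every maximal run of old edges begins either at the start of $\gamma_2$ or immediately after a returning edge, so there are at most $k_{r,\gamma_1}+1$ of them, and all old edges of $\gamma_2$ together contribute at most $2^{k_{r,\gamma_1}+1}$ choices.

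Next I would assemble the count. There are at most $n^{k-1}$ choices for $\gamma_1$ (its two endpoints are fixed). Given $\gamma_1$, the new/old/returning pattern on the $k$ edges of $\gamma_2$ is determined by the positions of the $k_{r,\gamma_1}$ returning edges together with, inside each of the (at most $k_{r,\gamma_1}+1$) maximal non-returning blocks, the point at which old edges give way to new ones (recall no new edge is followed by an old one within a block); bounding the latter by the number of compositions of $k_{n,\gamma_1}$ into at most $k_{r,\gamma_1}+1$ parts, with a short case split on the sizes of $k_{n,\gamma_1},k_{o,\gamma_1},k_{r,\gamma_1}$ (the case $k_{o,\gamma_1}=0$ being handled separately, since then there is no constraint), yields at most $\binom{k}{k_{r,\gamma_1}}\binom{k}{k_{r,\gamma_1}+1}$ patterns. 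Having fixed $\gamma_1$ and the pattern, each new edge of $\gamma_2$ offers at most $n$ choices for its endpoint, each returning edge offers at most $|V(\gamma_1)|=k+1\leq 2k$ choices, and (by the previous paragraph) the old edges contribute at most $2^{k_{r,\gamma_1}+1}$; hence the returning-and-old contribution is at most $2\bigl(2(k+1)\bigr)^{k_{r,\gamma_1}}\leq 2(k+1)(2k)^{k_{r,\gamma_1}}$, using $k_{r,\gamma_1}\leq k$.

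Finally I would extract the improvement $n^{-1_{v'\notin\{u,v\}}}$ by a case split, using $\{u,v\}\subseteq V(\gamma_1)$. If $v'\notin V(\gamma_1)$ then necessarily $v'\notin\{u,v\}$ and the last edge of $\gamma_2$ is new, so its endpoint is the prescribed $v'$ and costs nothing, replacing $n^{k_{n,\gamma_1}}$ by $n^{k_{n,\gamma_1}-1}$. If $v'\in\{u,v\}$ the indicator is $0$ and no saving is needed. If $v'\in V(\gamma_1)\setminus\{u,v\}$ then $\gamma_1$ is a self-avoiding $u$--$v$ path forced through the fixed interior vertex $v'$, so specifying the (unique) position of $v'$ among the $k-1$ interior vertices costs a factor at most $k$ and leaves only $n^{k-2}$ choices for $\gamma_1$; this stray factor $k$ is absorbed because in this case the last edge of $\gamma_2$ (old or returning, since $v'\in V(\gamma_1)$) terminates at the prescribed vertex $v'$, which pins down either one returning endpoint or the start and direction of the terminal old run, thereby removing a factor of order $k$ from the returning-and-old contribution. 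Multiplying the surviving factors gives exactly the stated bound. The main obstacle is precisely this last piece of constant bookkeeping: making the $n^{-1_{v'\notin\{u,v\}}}$ saving airtight in the case $v'\in V(\gamma_1)\setminus\{u,v\}$ by balancing the cost of locating $v'$ on $\gamma_1$ against the compensating saving at the final edge of $\gamma_2$, and verifying that the two binomial coefficients absorb the composition count in every regime of $(k_{n,\gamma_1},k_{o,\gamma_1},k_{r,\gamma_1})$; the structural facts about old runs are conceptually central but routine to check.
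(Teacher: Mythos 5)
Your plan follows the paper's argument: fix $\gamma_1$, classify the edges of $\gamma_2$ as new/old/returning with respect to $\gamma_1$, observe that each maximal old run traces $\gamma_1$ in one of two directions and must begin at the start of $\gamma_2$ or immediately after a returning edge (so there are at most $k_{r,\gamma_1}+1$ such runs), and bound the number of new/old/returning patterns by $\binom{k}{k_{r,\gamma_1}}\binom{k}{k_{r,\gamma_1}+1}$. Where you diverge is the case $v'\in V(\gamma_1)\setminus\{u,v\}$, and here the paper is simpler than your plan. The paper makes no attempt at cancellation there: it counts at most $kn^{k-2}$ choices of $\gamma_1$ passing through $v'$, at most $n^{k_{n,\gamma_1}}$ for the new edges (no saving, since the last edge of $\gamma_2$ may not be new), and at most $2^{k_{r,\gamma_1}+1}k^{k_{r,\gamma_1}}$ for old runs and returning endpoints, yielding a sub-case bound with prefactor $2k$; adding this to the $v'\notin V(\gamma_1)$ sub-case (prefactor $2$, same power of $n$) gives $2(k+1)$ at once. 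You instead spend the $(k+1)$ on the inequality $(2(k+1))^{k_{r,\gamma_1}}\le (k+1)(2k)^{k_{r,\gamma_1}}$ and then try to offset the stray factor $k$ from placing $v'$ on $\gamma_1$ by a compensating saving in the returning/old count (since the last edge of $\gamma_2$ is forced to hit $v'$). That saving can indeed be extracted---and the degenerate all-old sub-case is vacuous---so the plan is sound in principle, but it is the more delicate route and you rightly identify it as the main obstacle; the paper's ``sum the two sub-cases'' sidesteps it entirely.
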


\begin{proof}
  For this proof, whenever we speak of old, new, or returning edges
  of $\gamma_2$, we mean with respect to $\gamma_1$.

  First, assume that $v'$ is not an interior node of $\gamma_1$.
  There are at most $n^{k-1}$ such choices for $\gamma_1$; fix one
  and consider $\gamma_2$. Every sequence of old edges in $\gamma_2$
  either occurs at the beginning of $\gamma_2$, or it is preceded by
  a returning edge. Hence, there are at most
  $\binom{k}{k_{r,\gamma_1}}\binom{k}{k_{r,\gamma_1} + 1}$ choices
  for the edge types of $\gamma_2$: $\binom{k}{k_{r,\gamma_1}}$ choices for which
  edges are returning, and at most $\binom{k}{k_{r,\gamma_1}+1}$ choices for
  the end of each sequence of old edges. Each new edge has at most
  $n$ choices for its endpoint. In the case that $v' \not \in \{u, v\}$
  then (since it is also not an interior node of $\gamma_1$) the
  last edge is new, but it has no choices.
  Hence, there are at most $n^{k_{n,\gamma_1} - 1_{v' \not \in \{u, v\}}}$
  choices for the new edges.
  Each returning edge has at most
  $|E(\gamma_1)| = k$ choices for its endpoint, and every sequence
  of old edges has at most $2$ choices: it must follow the
  (self-avoiding) path
  $\gamma_1$, but it may do so in either direction; moreover,
  there are at most $k_{r,\gamma_1}+1$ distinct sequences of old edges. Hence, the
  total number of choices for $\gamma_2$ is bounded by
  \[
    \binom{k}{k_{r,\gamma_1}} \binom{k}{k_{r,\gamma_1} + 1} 2 ^{k_{r,\gamma_1} + 1}k^{k_{r,\gamma_1}} n^{k_{n,\gamma_1}}.
  \]
  Therefore there are at most
  \[
    2 \binom{k}{k_{r,\gamma_1}} \binom{k}{k_{r,\gamma_1} + 1} (2k)^{k_{r,\gamma_1}} n^{k + k_{n,\gamma_1} - 1 - 1_{v' \not \in \{u, v\}}}
  \]
  pairs that satisfy the conditions of the lemma, and also the
  additional constraint that $v'$ is not an interior node of $\gamma_1$.

  Now suppose that $v'$ is an interior node of $\gamma_1$. There are
  at most $k n^{k-2}$ ways to choose such a $\gamma_1$. We may
  repeat the previous paragraph to bound the number of choices
  of $\gamma_2$, except that this time there will be up to
  $n^{k_{n,\gamma_1}}$ choices for the new edges, because the final
  edge of $\gamma_2$ may not be new. Hence, there are at most
  \[
   2 k \binom{k}{k_{r,\gamma_1}} \binom{k}{k_{r,\gamma_1} + 1} (2k)^{k_{r,\gamma_1}} n^{k + k_{n,\gamma_1} - 2}
  \]
  pairs of paths of this form, where $v'$ is an interior node of $\gamma_1$.
  Combined with the other case, this proves the claim.
\end{proof}

\section{Weighted sums over self-avoiding paths}\label{sec:self-avoiding}

In this section, we consider the behavior of weighted sums over
self-avoiding paths.
In particular, we will prove~\eqref{eq:thm-first-moment},~\eqref{eq:thm-second-moment}, and~\eqref{eq:thm-cross-moments} from Theorem~\ref{thm:path-weights-main}.

\subsection{The weight of self-avoiding walks and simple cycles}

Eventually, we will need to bound (or bound) the expected weight
of complicated paths. Our basic building block for these computations
is the expected weight of a self-avoiding path.

\begin{lemma}\label{lem:path}
  Let $\zeta$ be either a self-avoiding path or a simple cycle.
  Let $z$ be the length of $\zeta$ and let $u, v$ be the endpoints.
  If $pmz = n^{o(1)}$ then uniformly with respect to $\gamma$
  \[
    \E \left[\prod_{e \in \zeta} W_e^m \mid \sigma_u, \sigma_v \right]
    = \begin{cases}
      \frac{\sigma_u \sigma_v s^z}{n^z} &\text{if $m = 1$} \\
      (1 + n^{-1 + o(1)})\frac{\sigma_u \sigma_v s^z + d^z}{n^z} &\text{if $m \ge 2$} \\
    \end{cases}
  \]
\end{lemma}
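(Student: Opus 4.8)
The plan is to condition first on the \emph{full} labelling $\sigma$, where $\zeta$ being self-avoiding (or a simple cycle) makes the edge-indicators independent, so the expectation factorizes over edges; and then to re-average over the labels of the interior vertices of $\zeta$, at which point almost all terms cancel and only two survive.

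\textbf{Reduction to a one-edge moment.} Write $\zeta = (w_0, \dots, w_z)$ with $w_0 = u$, $w_z = v$ (for a simple cycle take $w_0 = w_z =: u = v$ to be a fixed base vertex), and $e_i = \{w_{i-1}, w_i\}$. Since the edges $e_1, \dots, e_z$ are distinct, conditionally on $\sigma$ the indicators $1_{\{e_i \in E(G)\}}$ are independent, so $\E[\prod_i W_{e_i}^m \mid \sigma] = \prod_i \E[W_{e_i}^m \mid \sigma]$, and the $i$-th factor depends on $\sigma$ only through $\tau_i := \sigma_{w_{i-1}} \sigma_{w_i}$. For a single edge whose endpoint labels have product $\tau \in \{\pm1\}$, $W_e = B - d/n$ with $B \sim \mathrm{Bernoulli}((d + \tau s)/n)$, hence
\[
  f_m(\tau) := \E[W_e^m \mid \tau] = \frac{d + \tau s}{n}\Big(1 - \frac dn\Big)^m + \Big(1 - \frac{d + \tau s}{n}\Big)\Big(-\frac dn\Big)^m .
\]
As $\tau^2 = 1$, $f_m(\tau) = A_m + C_m \tau$ with $A_m = \tfrac12(f_m(1) + f_m(-1))$, $C_m = \tfrac12(f_m(1) - f_m(-1))$. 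For $m = 1$ this is exact: $A_1 = 0$ and $C_1 = s/n$. For $m \ge 2$, the hypothesis $pmz = n^{o(1)}$ (together with $d = n^{o(1)}$) makes $z, m$ and $md/n$ small, namely $z = n^{o(1)}$ and $md/n = n^{-1+o(1)}$, so $(1 - d/n)^m = 1 + n^{-1+o(1)}$ while $(d/n)^m \le (d/n)^2$; expanding and absorbing the $(\pm d/n)^m$ contributions gives
\[
  A_m = \frac dn\big(1 + n^{-1+o(1)}\big), \qquad C_m = \frac sn\big(1 + n^{-1+o(1)}\big),
\]
with errors depending only on $m$ (in particular uniform in $\zeta$).

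\textbf{Averaging over interior labels.} Since the conditioning is only on $\sigma_u, \sigma_v$, we have $\E[\prod_{e \in \zeta} W_e^m \mid \sigma_u, \sigma_v] = \E\big[\prod_{i=1}^z (A_m + C_m \tau_i)\big]$, the outer expectation being over the i.i.d.\ uniform signs $\sigma_{w_1}, \dots, \sigma_{w_{z-1}}$. Expanding,
\[
  \E\Big[\prod_{i=1}^z (A_m + C_m \tau_i)\Big] = \sum_{S \subseteq [z]} A_m^{\,z - |S|} C_m^{\,|S|}\, \E\Big[\prod_{i \in S} \tau_i\Big].
\]
In $\prod_{i \in S}\tau_i$ the sign $\sigma_{w_j}$ of an interior vertex $w_j$ ($1 \le j \le z-1$) appears with multiplicity $1_{\{j \in S\}} + 1_{\{j+1 \in S\}}$, so $\E[\prod_{i\in S}\tau_i] = 0$ unless $1_{\{j \in S\}} = 1_{\{j+1 \in S\}}$ for every such $j$, i.e.\ unless $S = \emptyset$ or $S = [z]$. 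The empty set contributes $A_m^z$; the full set contributes $C_m^z \prod_{i=1}^z \tau_i = C_m^z\,\sigma_u \sigma_v$ (the product telescopes; for the cycle every vertex is counted twice and the product is $1 = \sigma_u\sigma_v$). Hence $\E[\prod_{e\in\zeta} W_e^m \mid \sigma_u,\sigma_v] = A_m^z + \sigma_u \sigma_v\, C_m^z$. For $m = 1$ this is exactly $\sigma_u\sigma_v s^z/n^z$. For $m \ge 2$, since $z = n^{o(1)}$ we have $(1 + n^{-1+o(1)})^z = 1 + n^{-1+o(1)}$, so $A_m^z = (1 + n^{-1+o(1)}) d^z/n^z$ and $C_m^z = (1 + n^{-1+o(1)}) s^z/n^z$, yielding the claimed formula; since the answer depends only on $z, m, \sigma_u, \sigma_v$, the uniformity in $\zeta$ is automatic.

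\textbf{Where the work is.} Conceptually nothing is hard; the one thing to be disciplined about is the error accounting — that the per-edge relative error $n^{-1+o(1)}$ above survives being raised to the $z$-th power (this is exactly why the $z$ appears in the hypothesis), and that all $O(\cdot)$ and $o(1)$ estimates are uniform over $\zeta$ and over the conditioning labels. The only structural subtlety is the degenerate "simple cycle" case, which is absorbed by regarding it as the path case with $u = v$ for a fixed base vertex; then $\sigma_u\sigma_v = 1$ and every step above goes through verbatim.
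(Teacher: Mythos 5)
Your proof is correct and follows essentially the same approach as the paper's: conditioning on the full labelling to factor the expectation over the (distinct) edges, computing the per-edge moment, and then averaging over the interior labels via a parity/telescoping argument that leaves only the two "all-$s$" and "all-$d$" terms. The only cosmetic difference is that you write the per-edge moment as $A_m + C_m\tau$ and expand the product over subsets $S\subseteq[z]$, whereas the paper invokes the resulting identity $2^{-(z-1)}\sum_\tau\prod(\tau_x\tau_y s+d)=\sigma_u\sigma_v s^z+d^z$ directly; the underlying cancellation is identical.
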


\begin{proof}
  First, consider the case $m = 1$.
  For any labelling $\tau$ that is compatible with $\sigma_u$ and $\sigma_v$,
  \[
    \E \left[\prod_{e \in \zeta} W_e \mid \tau \right]
    = \prod_{(x,y) \in \zeta} \frac{\tau_{x} \tau_{y} s}{n}.
  \]
  Since $\zeta$ is a path, if $x$
  is an interior vertex of $\zeta$ then $\tau_x$ appears exactly
  twice in the product above.
  Since $\tau_x^2 = 1$, these terms all cancel out,
  leaving
  \[
    \E \left[\prod_{e \in \zeta} W_e \mid \tau \right]
    = \frac{\tau_{u} \tau_{v} s^z}{n^z},
  \]
  which proves the claim in the case $m=1$.

  For the case $m\ge2$, note that
  \[
    \E [W_{(x,y)}^m \mid \tau]
    = (-d/n)^m \left(1 - \frac{\tau_x \tau_y s + d}{n}\right)
    + (1 - d/n)^m \frac{\tau_x \tau_y s + d}{n}
    = (1 + O(dn^{-1}))^m \frac{\tau_x \tau_y s + d}{n}.
  \]
  Hence,
  \begin{equation}\label{eq:m-ge-2}
    \E \left[\prod_{e \in \zeta} W_e^m \mid \tau \right]
    = (1 + O(dn^{-1}))^{mz} \prod_{(x,y) \in \zeta} \frac{\tau_{x} \tau_{y} s + d}{n}.
  \end{equation}
  Now we take the average over all assignments $\tau$ that agree with $\sigma_u$ and $\sigma_v$:
  \[
   2^{-(z - 1)} \sum_\tau \prod_{(x, y) \in \zeta} \frac{\tau_x \tau_y s + d}{n}
   = \frac{\sigma_{u} \sigma_{v} s^{z} + d^{z}}{n},
  \]
  where the sum ranges over all $2^{z - 1}$ labellings $\tau$ on $\zeta$ that agree with $\sigma_u$ and $\sigma_v$. Combining this with~\eqref{eq:m-ge-2} completes the proof.
\end{proof}

\subsection{Decomposition into segments}

Although our current goal is to understand the contribution of
self-avoiding paths, in order to compute the second moment
in Theorem~\ref{thm:path-weights-main}, we will need to consider the
concatenation of two self-avoiding paths (which may not be self-avoiding).
Therefore, we introduce the following method for decomposing a general
path into its self-avoiding pieces.
This decomposition will also be useful in Section~\ref{sec:tangled},
where we consider more complicated paths.

\begin{definition}\label{def:SAW}
 Consider a path $\gamma$. We say that a collection of paths $\zeta^{(1)}, \dots, \zeta^{(r)}$
 is a SAW-decomposition of $\gamma$ if
 \begin{itemize}
  \item each $\zeta^{(i)}$ is a self-avoiding path;
  \item the interior vertices of each $\zeta^{(i)}$ are not contained in any other
  $\zeta^{(j)}$, nor is any interior vertex of $\zeta^{(i)}$ equal to the starting
  or ending vertex of $\gamma$; and
  \item the $\zeta^{(i)}$ cover $\gamma$, in the sense that
    $E(\gamma) = \bigcup_i E(\zeta^{(i)})$.
 \end{itemize}

  Given a SAW decomposition as above, we let $\Vends$ denote the set of vertices that are the endpoint
  of some $\zeta^{(i)}$ and we let $m_i$ denote the number of times that $\zeta^{(i)}$ was
  traversed in $\gamma$.
\end{definition}

Note that since $\zeta^{(i)}$ and $\zeta^{(j)}$ share no interior vertices,
every time that the path $\gamma$ begins to traverse $\zeta^{(i)}$, it must finish
traversing $\zeta^{(i)}$. Moreover, the fact that $\zeta^{(i)}$ and $\zeta^{(j)}$
share no interior vertices implies that they are edge-disjoint, and so for each
fixed $i$, every edge in $\zeta^{(i)}$ is traversed the same number (i.e. $m_i$) of times.

There is a natural way to construct a SAW-decomposition of a path $\gamma$.
Consider a path $\gamma$ between $u$ and $v$, and let $\Vints$ be the subset of
$\gamma$'s vertices that have degree 3 or more in $\gamma$. Let
\begin{equation}\label{eq:D}
\Vends = \Vints \cup \{u,v\} \cup\{w \in \gamma: \gamma
\text{ backtracks at } w\}.
\end{equation}
Then
$\gamma$ may be decomposed into a collection of self-avoiding walks between vertices
in $\Vends$. To be precise, suppose that $\gamma$ is given by
$u = u_0, u_1, \dots, u_k = v$. Let $j_1 > 0$ be minimal so that $u_{j_1} \in \Vends$ and let
$\gamma^{(1)}$ be the path $u_0, \dots, u_{j_1}$. Inductively, if $j_{i-1} < k$ then
let $j_i > j_{i-1}$ be minimal
so that $u_{j_i} \in \Vends$ and set $\gamma^{(i)}$ to be the path $u_{j_{i-1}}, \dots, u_{j_i}$.
It follows from this definition that the interior nodes in each $\gamma^{(i)}$ are degree
2 in $\gamma$; hence, each $\gamma^{(i)}$ is self-avoiding, and any pair $\gamma^{(i)},
\gamma^{(j)}$ are either identical, or their interior vertices are disjoint.
Finally, let $\{\zeta^{(1)}, \dots, \zeta^{(r)}\}$ be $\{\gamma^{(i)}\}$, but with
duplicates removed.

\begin{definition}\label{def:canonical-SAW}
 We call the preceding construction of $\zeta^{(1)}, \dots, \zeta^{(r)}$ the
 \emph{canonical SAW-decomposition of $\gamma$}.
\end{definition}

We remark that the same construction works for any set $\Vends$ that is larger than the
one defined in~\eqref{eq:D}.

\begin{definition}\label{def:U-canonical-SAW}
 For a set of vertices $U$, if we run the preceding construction, but with
 \[
\Vends = U \cup \Vints \cup \{u,v\} \cup\{w \in \gamma: \gamma
\text{ backtracks at } w\}.
 \]
 instead of as defined in~\eqref{eq:D}, then we call the resulting SAW-decomposition
 the \emph{$U$-canonical SAW-decomposition of $\gamma$.}
\end{definition}

\begin{lemma}\label{lem:r-bound}
 If $\zeta^{(1)}, \dots, \zeta^{(r)}$ is the canonical SAW-decomposition of $\gamma$
 then $r \le 2 k_r(\gamma) + B(\gamma) + 1$, where $B(\gamma)$ is the number of backtracks in $\gamma$.

 If $\zeta^{(1)}, \dots, \zeta^{(r)}$ is the $U$-canonical SAW-decomposition of $\gamma$
 then $r \le 2 k_r(\gamma) + B(\gamma) + 1 + |U|$.
\end{lemma}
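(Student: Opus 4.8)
The plan is to count the number of self-avoiding pieces produced by the construction, organizing the count according to what can cause a new piece to begin. By definition of the canonical SAW-decomposition, a new segment $\gamma^{(i)}$ starts precisely when the path $\gamma$ arrives at a vertex in $\Vends$. Up to duplicates, the number $r$ of distinct segments is at most the number of times $\gamma$ arrives at a vertex of $\Vends$, plus one (for the first segment). So it suffices to bound the number of \emph{arrival events} at $\Vends$-vertices along $\gamma$; actually, since a segment can only be cut at an \emph{interior} passage through $\Vends$ (the endpoints of $\gamma$ contribute the single $+1$), I should count visits of the open path to $\Vends \setminus \{u,v\}$, and note $\{u,v\}$ accounts for the additive $1$.

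First I would recall Claim~\ref{claim:edge_vertex_count} and the structure of $\Vints$: each time the walk returns to its already-traced path (i.e., each returning edge, of which there are $k_r(\gamma)$), it can create at most two new vertices of degree $\geq 3$ — one at the vertex where it rejoins, one at the vertex where it next departs — which is exactly the inequality $|\Vints| \le 2 k_r(\gamma)$ used in the proof of Lemma~\ref{lem:few-tangles}. But for the present lemma I need to bound not the \emph{number} of high-degree vertices but the \emph{number of times $\gamma$ passes through} $\Vends$. The key observation is that between two consecutive passages through $\Vends$, the path traverses a maximal self-avoiding segment of fresh structure, and each such "cut" is witnessed by one of a bounded number of local features: a returning edge (each returning edge forces at most $O(1)$ nearby cuts — entering and leaving the high-degree vertex it creates), a backtrack (each backtrack is, by definition, a vertex put into $\Vends$, contributing at most one cut per backtrack), or a vertex of $U$ in the $U$-canonical case (each such vertex is visited, in a self-avoiding-by-pieces sense, and since $\gamma$'s segments through it are controlled, contributes $O(1)$, but here we can afford the crude bound of one extra segment per element of $U$, whence $+|U|$). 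Carefully: a returning edge $(v_i,v_{i+1})$ puts $v_{i+1}$ (already visited) potentially into $\Vints$, and also the vertex from which $\gamma$ next leaves its old trace into $\Vints$; combined with the fact that each segment is cut at its two endpoints, this yields the factor $2k_r(\gamma)$. Backtracks contribute $B(\gamma)$ directly since each backtrack vertex is its own cut point. Summing the contributions — $2k_r(\gamma)$ from returning edges, $B(\gamma)$ from backtracks, $1$ from the endpoints $\{u,v\}$, and (in the $U$-variant) $|U|$ from the forced vertices — gives exactly $r \le 2k_r(\gamma) + B(\gamma) + 1$, respectively $r \le 2k_r(\gamma) + B(\gamma) + 1 + |U|$.

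I expect the main obstacle to be making the bookkeeping precise: one must verify that every segment boundary is chargeable to a \emph{distinct} returning edge (with multiplicity $\le 2$), backtrack, or $U$-vertex or endpoint, with no double counting and nothing missed. The cleanest way is induction on $k_r(\gamma)$: a path with $k_r = 0$ and no backtracks is already self-avoiding (so $r = 1$, matching $2\cdot 0 + 0 + 1$), and each additional returning edge, when inserted, splits at most two existing segments and creates at most one new one, adding at most... in fact one checks it adds at most two to $r$ in the worst case, which matches the coefficient $2$; backtracks are handled by a parallel and simpler induction, and the $+|U|$ term is immediate since enlarging $\Vends$ by one vertex splits at most one existing segment into two. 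The only subtlety requiring care is that a single returning edge might seem to create more than two cuts if the walk lingers at the rejoined vertex, but non-triviality of segments (each has length $\ge 1$) together with the fact that passing \emph{through} a degree-$3$ vertex, rather than arriving and leaving, is what matters, keeps the count at $2$ per returning edge.
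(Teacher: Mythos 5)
Your approach matches the paper's exactly: an incremental accounting in which each returning edge contributes at most a fixed amount to $r$, each backtrack contributes at most $1$, and each vertex of $U$ contributes at most $1$, on top of the base value $r=1$ for a self-avoiding path. The paper's proof phrases this in the same terms.

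The subtlety you flag but do not resolve --- your own count gives ``splits at most two existing segments and creates at most one new one'' ($+3$), after which you assert ``in fact one checks it adds at most two'' without an argument --- is a genuine issue, and the constant $2$ in the stated bound is in fact too small. A returning edge can close a cycle \emph{and} force the walk onto old edges, after which the vertex at which it exits the old trace also joins $\Vints$ and the re-traced stretch gets cut. Concretely, take $\gamma=(a,b,c,d,b,c,e)$: it is non-backtracking with $k_r(\gamma)=1$ and $B(\gamma)=0$, yet $\Vints=\{b,c\}$ and the canonical SAW-decomposition is $\{(a,b),\,(b,c),\,(c,d,b),\,(c,e)\}$, so $r=4>2\cdot1+0+1$. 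Thus neither your argument nor the paper's one-line justification actually establishes the constant $2$; what both do establish is a linear bound $r=O(k_r+B+1+|U|)$, and that is all the paper ever uses downstream (compare the quoted $r\le 4k_r+1$ in Section~\ref{sec:second-mom} and $r\le O(1)+|U|+1$ in the proof of Lemma~\ref{lem:one-tangled-path}). The honest fix is to keep the $+3$ per returning edge (or a similar larger constant) that your own bookkeeping yields, rather than to force it down to $+2$.
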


\begin{proof}
 Every returning edge in $\gamma$ increases $r$ by at most 2, since it can create a new
 SAW component, and it can split an existing component into 2 pieces.
 Every backtrack in $\gamma$ increases $r$ by at most 1, since it can create a new
 SAW component. This proves the first statement; to prove the second, note that each vertex $v \in A$
 creates at most one new component, since if $v \in \Vints$ then it has no effect, while if
 $v \not \in \Vints$ then it has degree at most 2 in $\gamma$ and so splitting the path that goes
 through $v$ introduces at most one new component.
\end{proof}

\subsection{The weight of a SAW-decomposition}

We can compute the expected weight of a SAW-decomposition by
simply applying Lemma~\ref{lem:path} to each component.
We state the following lemma slightly more generally, so that we
may also apply it to subsets of the SAW-decomposition of a path.

\begin{lemma}\label{lem:non-tangled-segments}
 Suppose $\zeta^{(1)}, \dots, \zeta^{(r)}$ are self-avoiding paths, where
 $\zeta^{(i)}$ is a path of length $z_i$ between $u_i$ and $v_i$.
 Suppose also that no $\zeta^{(i)}$ intersects an interior vertex
 of any other $\zeta^{(j)}$.
 Let $m_1, \dots, m_r \le n^{o(1)}$ be positive integers.
 Suppose that $\bigcup_i \zeta^{(i)}$ has at most $n^{o(1)}$ edges.
 Let $\Vends = \{u_1, v_1, \dots, u_r, v_r\}$.
 If $d \sum_i m_i z_i = n^{o(1)}$ then, uniformly over
 $\gamma$ and over all labellings $\sigma_\Vends$ on vertices in $\Vends$,
 \[
  \E \left[\prod_{i} \prod_{e \in \zeta^{(i)}} W_e^{m_i} \mid \sigma_\Vends \right]
  = (1 + n^{-1+o(1)}) \prod_{i: m_i = 1} \frac{\sigma_{u_i} \sigma_{v_i} s^{z_i}}{n^{z_i}}
  \prod_{i: m_i > 1} \frac{\sigma_{u_i} \sigma_{v_i} s^{z_i} + d^{z_i}}{n^{z_i}}.
\]
Moreover,
\[
  \left|\E \left[\prod_{i} \prod_{e \in \zeta^{(i)}} W_e^{m_i} \mid \sigma_\Vends \right]\right|
\le (1 + o(1)) 2^{r} s^k n^{-|E(\gamma)|},
\]
where $k = \sum_i \sum_{e \in \zeta^{(i)}} m_e$.
\end{lemma}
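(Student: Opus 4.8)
The plan is to prove both displays by direct computation, applying Lemma~\ref{lem:path} to each self-avoiding component $\zeta^{(i)}$ and then collecting the error terms. First I would observe that because no $\zeta^{(i)}$ meets an interior vertex of any other $\zeta^{(j)}$, the collection of edge sets $E(\zeta^{(i)})$ is pairwise disjoint; hence the random variables $\prod_{e \in \zeta^{(i)}} W_e^{m_i}$ are \emph{not} independent in general (they share endpoint labels $\sigma_{u_i}, \sigma_{v_i}$), but once we condition on $\sigma_\Vends$ they \emph{are} independent, since the only randomness left in $\prod_{e \in \zeta^{(i)}} W_e^{m_i}$ beyond the endpoint labels is the interior labels of $\zeta^{(i)}$ (which are disjoint across $i$) and the edge-presence indicators (which are disjoint across $i$). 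So conditionally on $\sigma_\Vends$ the expectation factorizes:
\[
  \E \left[\prod_{i} \prod_{e \in \zeta^{(i)}} W_e^{m_i} \;\middle|\; \sigma_\Vends \right]
  = \prod_i \E \left[\prod_{e \in \zeta^{(i)}} W_e^{m_i} \;\middle|\; \sigma_{u_i}, \sigma_{v_i}\right].
\]

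Next I would apply Lemma~\ref{lem:path} to each factor. The hypothesis $d \sum_i m_i z_i = n^{o(1)}$ guarantees that each individual $\zeta^{(i)}$ satisfies the hypothesis $p m_i z_i = n^{o(1)}$ of Lemma~\ref{lem:path} (here $p$ plays the role of an edge-density scale; in this paper $d$ and the earlier $p$ coincide up to the convention in the lemma statement), so each factor equals $\frac{\sigma_{u_i} \sigma_{v_i} s^{z_i}}{n^{z_i}}$ if $m_i = 1$ and $(1 + n^{-1+o(1)}) \frac{\sigma_{u_i}\sigma_{v_i} s^{z_i} + d^{z_i}}{n^{z_i}}$ if $m_i \ge 2$. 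Taking the product gives the first claimed identity, \emph{provided} the accumulated multiplicative error $(1+n^{-1+o(1)})^{\#\{i : m_i \ge 2\}}$ is still $1 + n^{-1+o(1)}$. Since the number of components $r$ is $n^{o(1)}$ (this follows from the structural hypothesis that $\bigcup_i \zeta^{(i)}$ has $n^{o(1)}$ edges, as each component has at least one edge), we have $(1 + n^{-1+o(1)})^r = 1 + r \cdot n^{-1+o(1)} + \cdots = 1 + n^{-1+o(1)}$, so the error is absorbed.

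For the second display, I would start from the factorized identity and bound each factor in absolute value. Regardless of $m_i$, Lemma~\ref{lem:path} gives
\[
  \left|\E \left[\prod_{e \in \zeta^{(i)}} W_e^{m_i} \;\middle|\; \sigma_{u_i}, \sigma_{v_i}\right]\right|
  \le (1 + n^{-1+o(1)}) \frac{s^{z_i} + d^{z_i}}{n^{z_i}}
  \le (1 + o(1)) \frac{2 s^{z_i}}{n^{z_i}},
\]
using $d \le d \le s$ — wait, one must be slightly careful here since it is $d \le s^2$, not $d \le s$, that we are assuming; however under Assumption~\ref{ass} and $s^2/d \ge \lambda > 1$ we do have $s > 1$ and, more to the point, we do \emph{not} need $d \le s$: instead bound $s^{z_i} + d^{z_i} \le 2\max(s,d)^{z_i}$ and note that when we multiply out $\prod_i$ and recall that for each $i$ the exponent of $n$ contributed is exactly $z_i$ while $\sum_i z_i$ counts each edge of $\gamma$ once (edge-disjointness again), whereas $k = \sum_i m_i z_i$ counts each edge with its multiplicity $m_i \ge 1$, we get $\prod_i (\text{stuff})^{z_i} / n^{z_i}$ with $\sum z_i = |E(\gamma)|$. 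The cleanest route is: for $m_i = 1$ the factor is exactly $\pm s^{z_i}/n^{z_i}$, and for $m_i \ge 2$ it is at most $(1+o(1))(s^{z_i} + d^{z_i})/n^{z_i} \le (1+o(1)) 2 s^{m_i z_i}/n^{z_i}$ since $s > 1$ and $d^{z_i} \le s^{2 z_i} \le s^{m_i z_i}$ when $m_i \ge 2$ (using $d \le s^2$). In either case the factor is at most $(1+o(1)) \, 2 \, s^{m_i z_i} / n^{z_i}$ in absolute value (for $m_i = 1$ this reads $2 s^{z_i}/n^{z_i}$, a valid upper bound). Multiplying over the $r$ components and using $(1+o(1))^r = 1 + o(1)$ (again $r = n^{o(1)}$), $\sum_i m_i z_i = k$, and $\sum_i z_i = |E(\gamma)|$, we obtain
\[
  \left|\E \left[\prod_{i} \prod_{e \in \zeta^{(i)}} W_e^{m_i} \;\middle|\; \sigma_\Vends \right]\right|
  \le (1+o(1)) \, 2^r \, s^k \, n^{-|E(\gamma)|},
\]
as claimed. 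The main obstacle, such as it is, is bookkeeping: making sure the conditioning on $\sigma_\Vends$ really does buy conditional independence (it does, because interior labels and edge indicators of distinct components are disjoint), checking that the hypotheses of Lemma~\ref{lem:path} transfer to each component, and confirming that the various $n^{o(1)}$ and $(1+n^{-1+o(1)})$ error terms remain of the stated order after being raised to the power $r = n^{o(1)}$ — there is no deep difficulty, only care needed with the exponent-counting identity $\sum_i z_i = |E(\gamma)|$ versus $\sum_i m_i z_i = k$.
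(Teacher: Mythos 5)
Your proof is correct and follows essentially the same route as the paper's: condition on $\sigma_\Vends$ to factorize (using edge-disjointness, which follows from the interior-vertex disjointness hypothesis), apply Lemma~\ref{lem:path} to each component, and absorb the $(1+n^{-1+o(1)})^r$ error using $r = n^{o(1)}$; for the absolute bound, both you and the paper ultimately invoke $d < s^2 \le s^{m_i}$ when $m_i \ge 2$ (the paper passes through the intermediate step $s^{z_i}+d^{z_i} \le 2d^{z_i}$ using $|s|\le d$, you go directly to $2s^{m_iz_i}$, but this is the same bound). You also correctly caught and repaired your own slip where you briefly wrote $d \le s$.
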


\begin{proof}
  To get the first claim, note that conditioned on
  $\sigma_{\Vends}$, the terms
  $\prod_{e \in \zeta^{(i)}} W_e^{m_i}$
  are independent as $i$ varies and apply Lemma~\ref{lem:path}
  to each $\zeta^{(i)}$.

  For the claimed inequality,
\begin{align*}
  \prod_{i: m_i = 1} \frac{s^{z_i}}{n^{z_i}}
  \prod_{i: m_i > 1} \frac{s^{z_i} + d^{z_i}}{n^{z_i}}
  &\le 2^{r}
  \prod_{i: m_i = 1} \frac{s^{z_i}}{n^{z_i}}
  \prod_{i: m_i > 1} \frac{d^{z_i}}{n^{z_i}} \\
  &= 2^{r} \prod_{\stackrel{e \in \gamma}{m_e = 1}} \frac{s}{n}
 \prod_{\stackrel{e \in \gamma}{m_e > 1}} \frac{d}{n} \\
 &\le 2^{r} n^{-|E(\gamma)|} s^k,
\end{align*}
where the last inequality follows because $1 < d < s^2$,
and so $m_e > 1$ implies $d < s^2 \le s^{m_e}$.
\end{proof}

\subsection{Proof of~\eqref{eq:thm-first-moment}--\eqref{eq:thm-cross-moments}}\label{sec:saw-paths-proofs}

Now we prove the first three parts of Theorem~\ref{thm:path-weights-main}. The claim~\eqref{eq:thm-first-moment} about
the first moment follows from Lemma~\ref{lem:path}.

\subsubsection{The second moment}\label{sec:second-mom}
For the second moment, we will expand the square in $Y_{u,v}^2$.
Suppose $\gamma_1$ and $\gamma_2$ are a pair of self-avoiding paths of
length $k$
from $u$ to $v$.  By reversing $\gamma_2$ and appending
it to $\gamma_1$, we obtain a single path ($\gamma$, say)
from $u$ to itself which passes through $v$ and backtracks at most
once (at $v$).
We consider the set of all $\gamma$ that can be obtained in this way, and divide them into four classes:
\begin{itemize}
  \item $\Gamma_0$ is the collection of such paths with $k_r(\gamma) = 0$.
    These paths begin with a self-avoiding
   walk from $u$ to $v$, after which they backtrack at $v$ and walk back to $u$ along exactly the same path.
 They have $k$ edges, $k-1$ vertices, and every edge is visited twice.

 \item $\Gamma_1$ is the collection of such paths with $k_r = 1$.
   These paths consist of a simple cycle that is traversed once,
   with up to two ``tails'' that are traversed twice each.

 \item $\Gamma_2$ is the collection of such paths with $2 \le k_r \le k^*$.

 \item $\Gamma_3$ is the collection of such paths with $k_r > k^*$.
\end{itemize}

First, we consider $\Gamma_0$. By Lemma~\ref{lem:path}, if
$U$ does not intersect the interior of $\gamma \in \Gamma_0$ then
$|\E [X_\gamma \mid \sigma_U] | = (1 + o(1)) 2 (d/n)^k$. There are
at most $n^{k-1}$ such paths, giving a total weight of at most
$(1 + o(1)) 2 d^k / n$.
On the other hand,
the contribution of $\gamma$ whose interiors do intersect with
$U$ is negligible: by Lemma~\ref{lem:non-tangled-segments} applied
to the $U$-canonical SAW-decomposition of $\gamma$,
$\E [X_\gamma \mid \sigma_U] = (1 + o(1)) 2^r (d/n)^k$, where $r$
is the number of interior vertices of $\gamma$ that intersect $U$.
On the other hand, the number of such paths $\gamma$ is
at most $|U|^r n^{k-1-r}$; since
$\sum_{r \ge 1} 2^r |U|^r n^{-r} = n^{-1 + o(1)}$, we see that these
contribute only a lower-order term. Hence,
\[
  \left|\sum_{\gamma \in \Gamma_0} \E [X_\gamma \mid \sigma_U]
    \right| \le (1 + o(1)) 2 d^k / n\le (1 + o(1)) 2 s^{2k} n^{-3},
\]
where the second inequality follows from our choice of $k$
in Theorem~\ref{thm:path-weights-main}.
In particular, this term is of a lower order than the bound
claimed in the theorem.

Next, we consider $\Gamma_1$. Recall that the first $k$ steps
of $\gamma \in \Gamma_1$ make up a simple path.
Let $i$ be minimal so that the $(k+i+1)$th step of $\gamma$ is new; let $j$ be such that the $2k-j$th
step of $\gamma$ is returning. It follows that the first $j$ edges of $\gamma$ consist of a simple
path where each edge is traversed twice. The same holds for edges $k-i+1$ through $k-1$. The rest
of $\gamma$ consists of a simple cycle of length $2k - 2(i + j)$, each edge of which is traversed once.
Let $\Gamma_1(i, j)$ denote the set of such paths.
By Lemma~\ref{lem:non-tangled-segments}, if $\gamma$'s interior does not intersect $U$ then
the expected weight of $\gamma \in \Gamma_1(i, j)$ is  bounded by
\[
  |\E [X_\gamma \mid \sigma_U]| \le (1 + o(1)) 2 (d/n)^{i+j} (s/n)^{2k - 2(i+j)}.
\]
Now, $|\Gamma_1(i, j)| = (1 + o(1)) n^{2k - i - j - 2}$ because $\gamma \in \Gamma_1(i, j)$
has $2k - i - j$ distinct vertices (including $u$ and $v$), and once those vertices and their order
is fixed then $\gamma$ is determined.
As in the argument for $\Gamma_0$, the paths whose interiors intersect
$U$ provide a negligible contribution, and hence
\[
  \left|\sum_{\gamma \in \Gamma_1(i,j)} \E [X_\gamma \mid \sigma_U]
    \right| = (1 + o(1)) 2 n^{-2} s^{2k - 2(i+j)} d^{i+j}
 = (1 + o(1)) 2 n^{-2} s^{2k} \left(\frac{d}{s^2}\right)^{i+j}.
\]
Summing over $i$ and $j$, we have
\begin{equation}\label{eq:Gamma_1''}
 \left|\sum_{\gamma \in \Gamma_1} \E [X_\gamma \mid \sigma_U] \right|
 \le (1 + o(1)) 2 n^{-2} s^{2k} \sum_{i,j = 0}^\infty \left(\frac{d}{s^2}\right)^{i+j}
 = (1 + o(1)) 2 n^{-2} s^{2k} \left(\frac{s^2}{s^2 - d}\right)^2.
\end{equation}
Hence, $\Gamma_1$ provides the main term in the claimed bound.

Next, we consider $\Gamma_2$, which we will split up
according to the number of new edges: let $\Gamma_{2,k_n}$
be the set of $\gamma \in \Gamma_2$ with $k_n(\gamma) = k_n$.
To estimate the size of $\Gamma_{2,k_n}$, we apply
Lemma~\ref{lem:two-saw-paths} with $u' = u$ and $v' = v$.
Since $k_n(\gamma) = k - 1 + k_{n,\gamma_1}(\gamma_2)$ and
$k_{r,\gamma_1}(\gamma_2) \le k^* = O(1)$,
Lemma~\ref{lem:two-saw-paths} yields $|\Gamma_{2,k_n}| \le n^{k_n - 1 + o(1)}$.
On the other hand, Lemma~\ref{lem:non-tangled-segments} implies 
that for $\gamma \in \Gamma_2$,
$\E [X_\gamma \mid \sigma_U] \le s^{2k} n^{-k_n(\gamma) - k_r(\gamma) + o(1)}$
since $k_n(\gamma) + k_r(\gamma)$ is the number of distinct edges in
$\gamma$, and since $r$ in Lemma~\ref{lem:non-tangled-segments}
is bounded by $4 k_r + 1 \le 4 k^* + 1 = O(1)$.
Recalling that $k_r(\gamma) \ge 2$ for all $\gamma \in \Gamma_2$,
\[
  \sum_{\gamma \in \Gamma_{2,k_n}} \E [X_\gamma \mid \sigma_U]
  \le s^{2k} n^{-k_r(\gamma) - 1 + o(1)} \le s^{2k} n^{-3 + o(1)}.
\]
Summing over the $k$ choices of $k_n$ shows that the paths in $\Gamma_2$
contribute a smaller order term than the paths in $\Gamma_1$.

Finally, we bound $\Gamma_3$ using Corollary~\ref{cor:X-many-returns};
these terms also contribute a smaller order term.

\subsubsection{The cross moment}\label{sec:cross-mom}

To compute the cross moment in Theorem~\ref{thm:path-weights-main},
we expand the product $Y_{u,v} Y_{u',v'}$ and divide pairs
of paths $\gamma_1 \in \SAW_{u,v}$, $\gamma_2 \in \SAW_{u',v'}$
into three groups:
\begin{itemize}
  \item $\Gamma_0$ are the pairs of paths that do not intersect.
  \item $\Gamma_1$ are the pairs of paths that do intersect,
    and that satisfy $k_{r,\gamma_1}(\gamma_2) \le k^*$.
  \item $\Gamma_2$ are the pairs of paths that satisfy
    $k_{r,\gamma_1}(\gamma_2) > k^*$.
\end{itemize}

For $(\gamma_1, \gamma_2) \in \Gamma_0$, the variables
$X_{\gamma_1}$ and $X_{\gamma_2}$ are independent, and hence
\[
  \E [X_{\gamma_1} X_{\gamma_2} \mid \sigma_U, \sigma_{U'}]
  = \E [X_{\gamma_1} \mid \sigma_U, \sigma_{U'}] \E[X_{\gamma_2} \mid \sigma_U, \sigma_{U'}].
\]
We recall from~\eqref{eq:thm-first-moment}
that the right hand side above is of the order $s^{2k} n^{-2}$;
in order to prove the claim about the cross moments, we need to show
that the contributions of $\Gamma_1$ and $\Gamma_2$ are of the order
$s^{2k} n^{-3 + o(1)}$.

To control $\Gamma_1$, we split pairs of paths according
to $k_{n,\gamma_1}(\gamma_2)$. If $\Gamma_{1,k_{n,\gamma_1}}$ is
the set of pairs of paths in $\Gamma_1$ with $k_{n,\gamma_1}(\gamma_2) = k_{n,\gamma_1}$
then Lemma~\ref{lem:two-saw-paths} implies that
$|\Gamma_{1,k_{n,\gamma_1}}| \le n^{k + k_{n,\gamma_1} - 2 + o(1)}$
(when applying Lemma~\ref{lem:two-saw-paths}, recall that $v'$
is distinct from $u$ and $v$). By Lemma~\ref{lem:non-tangled-segments},
and noting that $|E(\gamma_1) \cup E(\gamma_2)| \ge k + k_{n,\gamma_1} + 1$
because $k_{r,\gamma_1}(\gamma_2) \ge 1$,
\[
  \left|\sum_{(\gamma_1, \gamma_2) \in \Gamma_{1,k_{n,\gamma_1}}}
  \E [X_{\gamma_1} X_{\gamma_2} \mid \sigma_U, \sigma_{U'}]\right|
  \le |\Gamma_{1,k_{n,\gamma_1}}| s^{2k} n^{-k - k_{n,\gamma_1} - 1 + o(1)}
  \le s^{2k} n^{-3 + o(1)}.
\]
Summing over the $k$ possible values of $k_{n,\gamma_1}$ adds another
$n^{o(1)}$ factor, and we conclude that $\Gamma_1$ is a lower order term.

Finally, we control $\Gamma_2$.
For each pair $(\gamma_1, \gamma_2) \in \Gamma_2$, we may create a
new path $\gamma$ by joining the end
of $\gamma_1$ to the beginning of $\gamma_2$. Then $\gamma$
has length $2k+1$ and $k_r(\gamma) \ge k^*$.
Note that $|X_{\gamma}| \ge \frac 1n |X_{\gamma_1} X_{\gamma_2}|$ because
the new edge that we added always has $|W_e| \ge \frac 1n$. Hence,
\[
  \left|\sum_{(\gamma_1, \gamma_2) \in \Gamma_{2}}
  \E [X_{\gamma_1} X_{\gamma_2} \mid \sigma_U, \sigma_{U'}]\right|
  \le n \sum_\gamma \E [|X_\gamma| \mid \sigma_U, \sigma_{U'}],
\]
where the second sum ranges over all $\gamma$ of length $2k+1$
satisfying $k_r(\gamma) \ge k^*$. But by Corollary~\ref{cor:X-many-returns},
the last quantity is at most $n^{-3 + o(1)}$, and so $\Gamma_2$
contributes a lower order term.

\section{Weighted sums over complicated paths}
\label{sec:tangled}

In this section, we will prove~\eqref{eq:thm-bad-paths} by controlling
the sum of $X_\gamma$ over all non-self-avoiding paths
(in other words, we will show that the random variable
$|N_{u,v} - Y_{u,v}|$ is small).
The basic idea is the following: letting $\Xi$ be the event that
the graph is $\ell$-tangle-free, we will show that
$1_\Xi (N_{u,v} - Y_{u,v})$ has a small second moment.
We will do this by constructing, for each path $\gamma$, a cover
of $\Xi$ containing events of the form ``the edges in
$F$ are banned,'' where $F$ is some subset of possible edges.
On each of these events, the fact that some edges are banned
will help us show that $X_\gamma$ is small.

For the rest of the subsection, we fix the following notation:
Let $\gamma$ be a path of length $k$ with $t$ $\ell$-tangles;
let $k_r = k_r(\gamma)$ and $k_n = k_n(\gamma)$.
Take $\zeta^{(1)}, \dots, \zeta^{(r)}$ to be the
canonical SAW-decomposition
of $\gamma$, and let $\Vends$ be its set of endpoints.
Let $\sZeta = \sZeta(\gamma)$ be the collection of $\zeta^{(i)}$ that
have length at most $4\ell$ and let $\lZeta = \lZeta(\gamma)$ be the
other $\zeta^{(i)}$. We write $E(\sZeta) = \bigcup_{\zeta \in \sZeta} \zeta$,
and similarly for $E(\lZeta)$. Define
\[
  \calF = \calF(\gamma) = \{F \subset E(\sZeta): \sum_{e \in F} m_e \ge t
  \text{ and } |F| \le k_r - 1\}.
\]
For $F \subset E(\sZeta)$, let $\Omega_F$ be the event that no
edge of $F$ appears in $G$, and let
\[
  \Omega_\calF = \bigcup_{F \in \calF} \Omega_F.
\]
For some intuition on $\Omega_{\calF(\gamma)}$, note that if
$\gamma$ has no $\ell$-tangles then $t(\gamma) = 0$ and so
$\emptyset \in \calF(\gamma)$. Hence, $\Omega_{\calF(\gamma)}$ is
the entire probability space.
On the other hand, if $\gamma$ is a figure-eight-shaped graph
consisting of two short (e.g.\ length $\ell$) cycles that share a vertex,
then
$\calF(\gamma)$ is the collection $\{\{e\}: e \in E(\gamma)\}$,
and $\Omega_{\calF(\gamma)}$ is the event that some edge in $E(\gamma)$
fails to appear.

Recall that $\Xi$ is the event that $G$ contains no $\ell$-tangles.
\begin{lemma}\label{lem:cover-xi}
  For any path $\gamma$,
  $\Xi \subset \Omega_{\calF(\gamma)}$.
\end{lemma}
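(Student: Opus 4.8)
The plan is to show that whenever $G$ is $\ell$-tangle-free, there is some $F \in \calF(\gamma)$ with no edge of $F$ present in $G$, i.e.\ $\Xi \subset \Omega_F \subset \Omega_{\calF(\gamma)}$. The key observation is the definition of $t = t(\gamma)$: $\gamma$ has $t$ $\ell$-tangles means $t$ is the minimal number of edges (with multiplicity) whose removal from $\gamma$ makes the multigraph $(V(\gamma), E(\gamma))$ $\ell$-tangle-free. So if $G$ is $\ell$-tangle-free, then in particular the subgraph of $G$ consisting of those edges of $E(\gamma)$ that actually appear in $G$ must be $\ell$-tangle-free; hence the set $F_0$ of edges in $E(\gamma)$ that are \emph{absent} from $G$ must itself be a ``tangle-killing'' set, so $\sum_{e \in F_0} m_e \ge t$. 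This already gives the weight condition in the definition of $\calF$; the remaining work is to cut $F_0$ down to an $F \subset E(\sZeta)$ with $|F| \le k_r - 1$ while keeping $\sum_{e\in F} m_e \ge t$.

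First I would make the reduction to short segments precise. Any $\ell$-tangle is a subgraph of diameter at most $2\ell$ containing two cycles; such a subgraph cannot use any edge lying on a long SAW-segment $\zeta \in \lZeta$ (one of length $> 4\ell$), because traversing such a segment already forces diameter larger than $2\ell$ unless the tangle stays within it — but a single self-avoiding path contains no cycle, let alone two. More carefully: an $\ell$-tangle in $(V(\gamma),E(\gamma))$, being contained in a ball of radius $\ell$, can only involve the ``short'' part of the SAW-decomposition, so deleting edges of $E(\sZeta)$ suffices to destroy all $\ell$-tangles. Therefore I can take $F_0 \subset E(\sZeta)$ from the start: let $F_0$ be the set of edges in $E(\sZeta)$ absent from $G$; if $G$ is $\ell$-tangle-free then $E(\sZeta) \setminus F_0 \cup E(\lZeta)$ contains no $\ell$-tangle, and since long segments never participate in $\ell$-tangles, $E(\gamma) \setminus F_0$ is $\ell$-tangle-free, so $\sum_{e \in F_0} m_e \ge t$ by minimality of $t$.

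Next I would verify the cardinality bound. The point is that $E(\sZeta)$ is small: each short SAW-segment has length at most $4\ell$, but more to the point, the number of \emph{edges that lie on cycles} within the short part is controlled by $k_r$. Indeed, by Claim~\ref{claim:edge_vertex_count} the path $\gamma$ has $k_n + k_r$ distinct edges and $k_n + 1$ distinct vertices, so the ``excess'' (number of independent cycles in $(V(\gamma),E(\gamma))$) is exactly $k_r$. Any minimal tangle-killing set has size at most the cyclomatic number, hence at most $k_r$; and since $\emptyset$ works iff $\gamma$ is already $\ell$-tangle-free (in which case $t=0$ and $F=\emptyset \in \calF$ trivially), in the nontrivial case we can always find a minimal such set $F \subseteq F_0$ with $|F| \le k_r$ — and in fact $\le k_r - 1$ by a more careful count, since at least one cycle must come from a non-short segment or the starting/ending structure, or by observing that a genuinely minimal hitting set for the (at most $k_r - 1$ many independent short) cycles has the stated size. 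I would take $F$ to be a minimal subset of $F_0$ still satisfying $\sum_{e\in F} m_e \ge t$; minimality forces $|F|$ to be bounded by the number of independent constraints, which is at most $k_r - 1$.

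The main obstacle is pinning down the bound $|F| \le k_r - 1$ rather than merely $|F| \le k_r$: this requires arguing that one unit of cyclomatic excess is always ``spent'' outside $E(\sZeta)$ or is otherwise unavailable, for instance because the concatenation/backtracking structure of $\gamma$ in the constructions leading up to this lemma guarantees it, or because a minimal $F$ meeting a weighted threshold $t$ can be chosen with one fewer element than the naive cycle-space bound. I would resolve this by working with the explicit structure of the canonical SAW-decomposition: the cycles available to an $\ell$-tangle correspond to returning edges whose endpoints close up a short cycle, and a counting of how returning edges create cycles (each returning edge adds at most one to the cyclomatic number) combined with the fact that at least one returning edge or backtrack is ``used up'' in connecting segments yields $|E(\sZeta)$-cycles$| \le k_r - 1$, so a minimal hitting set has at most $k_r - 1$ edges. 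Everything else is bookkeeping with the definitions already in place.
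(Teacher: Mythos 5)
Your proposal correctly identifies the setup: take $F_0 = E(\sZeta(\gamma)) \setminus E(G)$, observe that long segments cannot participate in an $\ell$-tangle, conclude that $\gamma\setminus F_0$ is $\ell$-tangle-free and hence $\sum_{e\in F_0} m_e \ge t$. That matches the paper. But you explicitly flag, and then fail to resolve, the crux of the lemma: the cardinality bound $|F|\le k_r-1$. Your final paragraph is a list of candidate heuristics (``cycles available to tangles,'' ``one returning edge is used up,'' ``a minimal hitting set for the short cycles'') with no argument committed to, and the step you do commit to earlier is actually wrong: you propose taking $F$ minimal among subsets of $F_0$ with $\sum_{e\in F} m_e \ge t$, but minimality under the weight constraint does not bound $|F|$ at all --- if all multiplicities are $1$ and $t$ is large, such a minimal $F$ has exactly $t$ elements, with no a priori relation to $k_r-1$. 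Your passing claim that a minimal tangle-killing set has size at most the cyclomatic number is also unproved and would in any case give $k_r$, not $k_r-1$.

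The correct notion, used in the paper, is to take $F$ minimal among subsets of $E(\sZeta(\gamma))\setminus E(G)$ such that $\gamma\setminus F$ is $\ell$-tangle-free; the bound $\sum_{e\in F} m_e\ge t$ is then automatic from the definition of $t$. The cardinality bound follows from a structural observation you do not find: no connected component of $\gamma\setminus F$ can be a tree, because adding back an edge of $F$ incident to such a component would be a bridge, would create no new cycle, hence no new tangle, contradicting minimality. Since every component of $\gamma\setminus F$ therefore has at least as many edges as vertices, $|E(\gamma)|-|F| = |E(\gamma\setminus F)| \ge |V(\gamma)| = k_n+1$, giving $|F|\le (k_n+k_r)-(k_n+1) = k_r-1$. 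This is the missing step.
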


\begin{proof}
  Fix the path $\gamma$.
  With the notation preceding the lemma, take some $G \in \Xi$
  and let $F$ be a minimal subset of $E(\sZeta(\gamma)) \setminus E(G)$ such that 
  $\gamma \setminus F$ has no $\ell$-tangles;
  to see that $F$ is well-defined, note that there exist
  subsets $F'$ of $E(\sZeta(\gamma)) \setminus E(G)$ such that
  $\gamma \setminus F'$ has no $\ell$-tangles: since $G$ is $\ell$-tangle-free,
  $E(\sZeta(\gamma)) \setminus E(G)$ is such a set.
  Since $F \cap E(G) = \emptyset$, clearly $G \in \Omega_F$.
  We claim that $F \in \calF(\gamma)$; this will imply that
  $G \in \Omega_{\calF(\gamma)}$, completing the proof.

  Since $\gamma \setminus F$ is $\ell$-tangle-free, the fact that
  $t$ is the number of $\ell$-tangles in $\gamma$ implies that
  $\sum_{e \in F} m_e \ge t$. In order to see that $|F| \le k_r - 1$,
  we claim that no connected component of $\gamma \setminus F$
  is a tree, and that therefore $\gamma\setminus F$ has at least
  as many edges as vertices.
  Indeed, $\gamma$ is connected and so if $\gamma \setminus F$ has
  some connected component $\gamma_1$ that is a tree then $F$ must contain
  an edge connecting $\gamma_1$ to some other component of
  $\gamma \setminus F$. Adding this edge back into $\gamma \setminus F$
  cannot introduce any tangles, contradicting the assumption that
  $F$ was minimal.
  We conclude that $\gamma \setminus F$ has at least as many edges
  as vertices, and so $|F| \le |E(\gamma)| - |V(\gamma)| = k_r - 1$.
\end{proof}

\begin{lemma}\label{lem:xi-small}
  If $\ell \log d = o(\log n)$ then
  $\P[\Xi] \ge 1 - n^{-1 + o(1)}$.
\end{lemma}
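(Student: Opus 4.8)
The plan is to bound $\P[\Xi^c]$ by a first-moment argument over all possible $\ell$-tangles. Recall that an $\ell$-tangle is a subgraph of diameter at most $2\ell$ containing two cycles; if $G$ contains an $\ell$-tangle then it contains a minimal one, and a minimal such subgraph is the union of two cycles sharing at most a path (a ``theta graph'' or ``figure-eight'' or two cycles joined by a path), so it has at most $2\ell+1$ edges more than... more precisely, a minimal $\ell$-tangle has $v$ vertices and $v+1$ edges for some $v \le 4\ell + O(1)$ (two independent cycles means one more edge than vertices, after accounting for the tree structure connecting them; any minimal two-cycle subgraph of diameter $\le 2\ell$ has at most, say, $6\ell$ edges). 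So it suffices to show that the expected number of subgraphs $H \subseteq G$ with $|E(H)| = |V(H)| + 1$ and $|V(H)| \le 6\ell$ is $n^{-1+o(1)}$.

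First I would set up the union bound: for each candidate vertex set of size $j \le 6\ell$ and each choice of $j+1$ edges among those vertices forming a connected graph, the probability that all $j+1$ edges are present in $G$ is at most $(a/n)^{j+1} \le (d/n)^{j+1}$ (using $a,b \le 2d$, so each edge probability is $O(d/n)$; being careful, each edge is present with probability at most $a/n$, and since Assumption~\ref{ass} gives $d = n^{o(1/\log\log n)}$ we can absorb constants). The number of ways to choose the ordered vertex set is at most $n^j$, and the number of ways to choose which $j+1$ pairs form the edge set is at most $\binom{\binom{j}{2}}{j+1} \le j^{2(j+1)}$. Hence the expected number of such subgraphs is at most
\[
\sum_{j \le 6\ell} n^j \cdot j^{2(j+1)} \cdot (d/n)^{j+1}
= \frac{1}{n} \sum_{j \le 6\ell} j^{2(j+1)} d^{j+1}
\le \frac{1}{n} \cdot 6\ell \cdot (6\ell)^{2(6\ell+1)} d^{6\ell+1}.
\]
Then I would take logarithms: the logarithm of the summand is $O(\ell \log \ell) + O(\ell \log d) = o(\log n)$, using $\ell \ll \sqrt{\log n}$ (so $\ell \log \ell \ll \sqrt{\log n} \cdot \log\log n = o(\log n)$) and the hypothesis $\ell \log d = o(\log n)$. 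Therefore the whole sum is $n^{o(1)}$, and the expected number of minimal $\ell$-tangles is $n^{-1 + o(1)}$. By Markov's inequality, $\P[\Xi^c] \le n^{-1+o(1)}$, which gives the claim.

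The only mildly delicate point — and the step I would be most careful with — is pinning down the correct combinatorial description of a minimal $\ell$-tangle, namely that it has exactly one more edge than vertices and at most $O(\ell)$ vertices. A graph of diameter $\le 2\ell$ with two cycles need not itself be minimal, but any minimal subgraph witnessing ``contains two cycles'' is obtained by taking two cycles through a common structure; after removing degree-one vertices it is a subdivision of one of the three multigraphs on two vertices (double edge plus loop, etc.) or a theta/dumbbell graph, all of which have cyclomatic number $\le 2$ — but since an $\ell$-tangle only requires \emph{at least} two cycles in something of bounded diameter, it is cleanest to argue: if $G$ is not $\ell$-tangle-free, some ball $B_\ell(v)$ has at least two cycles, hence contains a connected subgraph $H$ with $|E(H)| \ge |V(H)| + 1$; take $H$ minimal with this property and note $H$ has diameter $\le 2\ell$ so $|V(H)| \le 1 + d + d^2 + \cdots$ — actually we cannot use degree bounds since degrees are unbounded in the sparse model, so instead use that a minimal $H$ with $|E| \ge |V|+1$ is precisely a theta graph or figure-eight (subdivided), which has at most two branch vertices and hence at most $\le 2\ell \cdot 3 \le 6\ell$ edges on its at most three internally-disjoint paths. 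This structural claim is elementary graph theory but should be stated explicitly; with it in hand, the first-moment computation above is routine.
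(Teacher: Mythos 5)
Your proof is correct and follows essentially the same first-moment/union-bound strategy as the paper: reduce to minimal witnesses $H$ with $|E(H)|=|V(H)|+1$ and $|V(H)|=O(\ell)$, then show the expected number of such subgraphs is $n^{-1+o(1)}$. The one difference worth noting is in the combinatorial count: the paper first observes that such minimal subgraphs form only $O(\ell^3)$ isomorphism classes (they are subdivisions of the figure-eight, theta, or dumbbell, determined by at most three chain lengths each $\le 4\ell$), and then multiplies by $n^m (2d/n)^m$; you instead bound the number of labelled subgraphs directly by $n^j \cdot j^{O(j)}$, which is cruder by a factor $j^{O(j)} = e^{O(\ell\log\ell)}$. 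This extra factor is harmless only because Assumption~\ref{ass} gives $\ell \ll \sqrt{\log n}$, hence $\ell\log\ell = o(\log n)$ — so your argument leans a bit more heavily on the standing assumption on $\ell$ than the lemma's stated hypothesis $\ell\log d = o(\log n)$ alone, but in context both are in force, and the proof goes through.
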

\begin{proof}
  Let $H$ be any fixed graph with $m$ vertices and $m+1$ edges.
  There are at most $n^m$ ways to embed $H$ into $G$, and for each
  of those embeddings, the probability that all edges in $H$ appear
  is at most $(2d/n)^m$. By a union bound, the probability that
  $H$ is a subgraph of $G$ is at most $n^{-1} (2d)^m$.

  Now, if $G$ contains an $\ell$-tangle then it has some neighborhood
  of radius $\ell$ containing two cycles. By taking these two cycles
  and (if necessary) the shortest path connecting them, we obtain
  a subgraph of $H$ of $G$ with $m \le 4\ell$ vertices, $m+1$ edges,
  and no vertices of degree $1$. Up to isomorphism, there are no
  more than $O(\ell^3)$ such graphs $H$: indeed, all vertices
  of $H$ have degree two, except for either two that have degree three
  or one that has degree four. In either case, the graph is determined
  up to isomorphism
  by specifying the lengths of the chains that connect these vertices
  of higher degree. These lengths are all at most $4\ell$, and there are
  at most three of them to choose. By taking a union bound over all
  such $H$ and applying the argument of the previous paragraph,
  $1 - \P[\Xi] \le O(\ell^3 n^{-1} (2d)^{4\ell}) = n^{-1 + o(1)}$.
\end{proof}

Before proceeding to bound the weight of non-self-avoiding paths,
we present one more preliminary lemma. Because we will take a second
moment, we will need to handle pairs of non-self-avoiding paths.
In order to do so, we need to interpret the condition
$\sum_{e \in F} m_e \ge t$ in the definition of
$\calF(\gamma)$ for pairs of paths.
In the following lemma we deal with multiple paths, so we
will write $m_e(\gamma)$ for the number of times that the path
$\gamma$ crosses the edge $e$.

\begin{lemma}\label{lem:cutting-tangles}
  Let $\gamma_1$ and $\gamma_2$ be two paths from $u$ to $v$ of length $k$.
  Let $\gamma$ be the path from $u$ to $u$ obtained by first following
  $\gamma_1$ and then following the reversal of $\gamma_2$.
  For any $F_1 \in \calF(\gamma_1)$ and $F_2 \in \calF(\gamma_2)$,
  \[
    \sum_{e \in F_1 \cup F_2} m_e(\gamma)
    \ge t(\gamma_1) + t(\gamma_2) + |F_1 \cup F_2| - k_r(\gamma) + 1.
  \]
\end{lemma}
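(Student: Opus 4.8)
The plan is to reduce the whole statement to elementary graph bookkeeping, via the single observation that $\gamma$ traverses every edge of $\gamma_1$ and every edge of $\gamma_2$ and nothing else; hence for every edge $e$,
\[
m_e(\gamma) = m_e(\gamma_1) + m_e(\gamma_2).
\]
Splitting the sum over $F_1\cup F_2$ accordingly and using that $F_i\subseteq E(\gamma_i)$ (so $m_e(\gamma_i)\ge 1$ on $F_i$) together with the defining inequality $\sum_{e\in F_i}m_e(\gamma_i)\ge t(\gamma_i)$ built into $\calF(\gamma_i)$, I would obtain
\[
\sum_{e\in F_1\cup F_2}m_e(\gamma)\ \ge\ t(\gamma_1)+t(\gamma_2)+\big|(F_2\setminus F_1)\cap E(\gamma_1)\big|+\big|(F_1\setminus F_2)\cap E(\gamma_2)\big|.
\]
Thus the lemma follows once I show
\[
\big|(F_2\setminus F_1)\cap E(\gamma_1)\big|+\big|(F_1\setminus F_2)\cap E(\gamma_2)\big|\ \ge\ |F_1\cup F_2|-k_r(\gamma)+1 .
\]
When $|F_1\cup F_2|\le k_r(\gamma)-1$ there is nothing to prove (indeed the cruder bound $\sum_{e\in F_1\cup F_2}m_e(\gamma)\ge t(\gamma_1)+t(\gamma_2)$ already suffices there), so I may assume the removal set is genuinely large.

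Next I would rephrase the displayed inequality as a statement about cycle ranks. Let $S$ be the set of edges of $F_1\cup F_2$ that are \emph{not} counted on the left-hand side, namely
\[
S=(F_1\cap F_2)\ \cup\ \big((F_1\setminus F_2)\setminus E(\gamma_2)\big)\ \cup\ \big((F_2\setminus F_1)\setminus E(\gamma_1)\big).
\]
Then the inequality to prove is exactly $|S|\le k_r(\gamma)-1$, i.e.\ (since $G_\gamma:=(V(\gamma),E(\gamma))$ is connected, so that $|E(G_\gamma)|-|V(G_\gamma)|+1=k_r(\gamma)$ by Claim~\ref{claim:edge_vertex_count}) that $G_\gamma\setminus S$ still has at least as many edges as vertices. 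A short check shows $S$ deletes from $E(\gamma_1)$ only edges of $F_1$ and from $E(\gamma_2)$ only edges of $F_2$; consequently $E(\gamma_i)\setminus F_i$ is disjoint from $S$ for $i=1,2$, and in fact $G_\gamma\setminus S=(G_{\gamma_1}\setminus F_1)\cup(G_{\gamma_2}\setminus F_2)$.

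Finally I would exploit that each $G_{\gamma_i}\setminus F_i$ is ``not a forest'' in a strong sense: since $G_{\gamma_i}$ is connected with cycle rank $k_r(\gamma_i)$ and $|F_i|\le k_r(\gamma_i)-1$, the graph $G_{\gamma_i}\setminus F_i$ has cycle rank at least its number of connected components, hence at least as many edges as vertices. Applying the inclusion--exclusion identity for $|E|-|V|$ to the union $G_\gamma\setminus S=(G_{\gamma_1}\setminus F_1)\cup(G_{\gamma_2}\setminus F_2)$, it then remains only to control the edge--vertex excess of the overlap $(G_{\gamma_1}\setminus F_1)\cap(G_{\gamma_2}\setminus F_2)$, i.e.\ of the common-edge subgraph $G_{\gamma_1}\cap G_{\gamma_2}$ with the edges of $F_1\cup F_2$ deleted. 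I expect this overlap step to be the main obstacle: one must show that any surplus of cycles carried by the part of the graph traversed by \emph{both} $\gamma_1$ and $\gamma_2$ is absorbed either by the slack in the constraints $|F_i|\le k_r(\gamma_i)-1$ or by the edges $F_1\cap F_2$ (which necessarily lie inside that common part), the clean special case being when $G_{\gamma_1}\cap G_{\gamma_2}$ is itself a forest, where the union's excess is immediately non-negative. The structural facts I would lean on to close this are the restriction $F_i\subseteq E(\sZeta(\gamma_i))$ and the connectivity of each $G_{\gamma_i}$ (both being walks between the common endpoints $u$ and $v$).
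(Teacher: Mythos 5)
Your proposal follows the paper's own proof essentially step for step. Your set $S$ is exactly the paper's $H = (F_1\cup F_2)\setminus(F_1'\cup F_2')$ with $F_1' = (F_1\setminus F_2)\cap E(\gamma_2)$ and $F_2' = (F_2\setminus F_1)\cap E(\gamma_1)$; the decomposition $\sum_{e\in F}m_e(\gamma) = \sum_H + \sum_{F_1'} + \sum_{F_2'}$, the observation $m_e(\gamma)\ge m_e(\gamma_i)+1$ on $F_i'$, the use of $\sum_{e\in F_i}m_e(\gamma_i)\ge t(\gamma_i)$, and the reduction to $|H|\le k_r(\gamma)-1$ are all identical. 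You also correctly observe the identity $\gamma\setminus H = (\gamma_1\setminus F_1)\cup(\gamma_2\setminus F_2)$, which is the crux of the paper's bound on $|H|$.

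Where you stop is where the paper simply asserts the result: the paper writes that $(\gamma_1\setminus F_1)\cup(\gamma_2\setminus F_2)$ ``also has at least as many edges as vertices'' with no further argument, whereas you explicitly flag this union step and the excess of the overlap $(\gamma_1\setminus F_1)\cap(\gamma_2\setminus F_2)$ as the remaining obstacle. Your caution is well placed: $|E|\ge|V|$ for each piece does not by itself imply $|E|\ge|V|$ for the union, precisely because the intersection graph may carry cycles, which is the quantity you isolate via inclusion--exclusion. The paper's earlier Lemma~\ref{lem:cover-xi} does establish the stronger ``no tree component'' property, but only for a \emph{minimal} $F$, whereas $\calF(\gamma_i)$ as defined contains non-minimal sets, so that route does not transfer to an arbitrary $F_i\in\calF(\gamma_i)$. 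In short: you have reproduced the paper's argument and correctly identified the one step the paper leaves unjustified; to finish you would need to either supply a genuine argument for the union's edge--vertex count (e.g.\ by showing the relevant overlap carries no extra cycles, using $F_i\subset E(\sZeta(\gamma_i))$ and the shared endpoints $u,v$) or restrict attention to minimal elements of $\calF$, neither of which is done here or in the paper.
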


\begin{proof}
  Let $F = F_1 \cup F_2$, $F_1' = (F_1 \setminus F_2) \cap E(\gamma_2)$,
  and $F_2' = (F_2 \setminus F_1) \cap E(\gamma_1)$.
  Then set $H = F \setminus (F_1' \cup F_2')$.
  Recall that $\gamma_i \setminus F_i$ has at least as many edges
  as vertices.
  \[
    \gamma \setminus H = (\gamma_1 \setminus F_1) \cup (\gamma_2 \setminus F_2)
  \]
  also has at least as many edges as vertices.
  Hence, $|H| \le k_r(\gamma) - 1$.
  
  For every $e \in F_i'$, we have $m_e(\gamma) \ge m_e(\gamma_i) + 1$.
  Hence,
  \begin{align*}
    \sum_{e \in F} m_e(\gamma)
    &= \sum_{e \in H} m_e(\gamma)
    + \sum_{e \in F_1'} m_e(\gamma)
    + \sum_{e \in F_2'} m_e(\gamma) \\
    &\ge  \sum_{e \in H} (m_e(\gamma_1) + m_e(\gamma_2))
    + \sum_{e \in F_1'} m_e(\gamma_1) + \sum_{e \in F_2'} m_e(\gamma_2)
    + |F_1'| + |F_2'| \\
    &\ge t(\gamma_1) + t(\gamma_2) + |F_1'| + |F_2'| \\
    &\ge t(\gamma_1) + t(\gamma_2) + |F| - k_r(\gamma) + 1.
    \qedhere
  \end{align*}
\end{proof}

\subsection{The weight of a complicated path}

Let $\gamma_1$ and $\gamma_2$ be non-backtracking paths from $u$ to
$v$ of length $k$ that are not self-avoiding. That is,
$k_r(\gamma_1), k_r(\gamma_2) \ge 1$. Let $\gamma$ be the path
obtained by first following $\gamma_1$ and then following $\gamma_2$
backwards.
Let $t(\gamma_i)$ be the number of tangles in $\gamma_i$.

Recall from~\eqref{eq:k*} that $k^*$ is a constant (depending on
$s$ and $d$) such that paths with $k_r > k^*$ are irrelevant.

\begin{lemma}\label{lem:one-tangled-path}
  Suppose that $k_r(\gamma) \le k^*$.
  Take $U \subset V$.
  If $\ell |U| = o(\log n)$ and $t = t(\gamma_1) + t(\gamma_2)$ then
  for any $F_1 \in \calF(\gamma_1)$ and $F_2 \in \calF(\gamma_2)$,
  \[
    \left|
      \E \left[
        1_{\Omega_{F_1}} 1_{\Omega_{F_2}} X_\gamma
        \mid \sigma_U
      \right]
    \right| \le
    s^{2k} n^{-k_n(\gamma) - t - 1 + o(1)},
  \]
  uniformly over $\gamma$ and $\sigma_U$.
\end{lemma}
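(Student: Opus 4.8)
The plan is to reveal the conditioning event, split $X_\gamma$ into a deterministic factor coming from the edges of $F:=F_1\cup F_2$ (which $\Omega_{F_1}\cap\Omega_{F_2}=\Omega_F$ forces to be absent) and a random factor on the remaining edges, evaluate the random factor by decomposing $\gamma$ into self-avoiding pieces and invoking Lemma~\ref{lem:non-tangled-segments}, and then do the bookkeeping using Lemma~\ref{lem:cutting-tangles} for the power of $n$ and the hypothesis $d<s^2$ for the power of $s$. First I would note that on $\Omega_F$ every $e\in F$ has $W_e=-d/n$, and that $F$ is disjoint from $E(\gamma)\setminus F$; with $K:=\sum_{e\in F}m_e(\gamma)$, conditional independence of the edges of $G$ given $\sigma$ gives
\[
 \big|\E[\,1_{\Omega_F}X_\gamma\mid\sigma_U\,]\big| \;=\; (1-o(1))\,(d/n)^{K}\,\Big|\E\Big[\prod_{e\in E(\gamma)\setminus F}W_e^{\,m_e(\gamma)}\ \Big|\ \sigma_U\Big]\Big|,
\]
where the $(1-o(1))$ is $\prod_{e\in F}\P[e\notin G\mid\sigma_U]$, which is $1-o(1)$ since $|F|=O(1)$ and $d=n^{o(1)}$.

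To bound the remaining expectation I would take the $U$-canonical SAW-decomposition of $\gamma$ and then further cut each segment at the edges of $F$; since $F\subseteq E(\sZeta)$ only short segments are cut, every piece stays self-avoiding, and no piece meets an interior vertex of another, so Lemma~\ref{lem:non-tangled-segments} applies (conditioning on all the piece-endpoints and then averaging is harmless since its bound is uniform over labellings). Keeping the precise shape --- a length-$z$ piece traversed once contributes $s^z/n^z$, one traversed at least twice contributes $(s^z+d^z)/n^z\le 2d^z/n^z$ --- and writing $z^*_1$ (resp.\ $z^*_{\ge2}$) for the number of distinct edges on once-traversed (resp.\ multiply-traversed) pieces, this yields a bound of the form $n^{o(1)}\,s^{z^*_1}d^{z^*_{\ge2}}\,n^{-(z^*_1+z^*_{\ge2})}$. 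The $2^{O(r')}$ combinatorial factor here is $n^{o(1)}$ because, by Lemma~\ref{lem:r-bound}, $r'\le 2k_r(\gamma)+B(\gamma)+1+|U|+|F|$, where $B(\gamma)\le1$, $k_r(\gamma)\le k^*$, and $k_r(\gamma_1),k_r(\gamma_2)\le k_r(\gamma)$ (the cyclomatic number is reversal-invariant and monotone under taking prefixes, so also $|F|=O(1)$), while $\ell|U|=o(\log n)$ forces $|U|=o(\log n)$.

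Multiplying in $(d/n)^K$ and using $z^*_1+z^*_{\ge2}=|E(\gamma)|-|F|=k_n(\gamma)+k_r(\gamma)-|F|$ (Claim~\ref{claim:edge_vertex_count}), the whole quantity is $n^{o(1)}(d/n)^{K+z^*_{\ge2}}(s/n)^{z^*_1}$, and applying $d<s^2$ to the first factor turns it into $n^{o(1)}\,s^{\,2(K+z^*_{\ge2})+z^*_1}\,n^{-(K+z^*_{\ge2}+z^*_1)}$. For the exponent of $n$, Lemma~\ref{lem:cutting-tangles} gives $K\ge t+|F|-k_r(\gamma)+1$, hence $K+z^*_{\ge2}+z^*_1=K+k_n(\gamma)+k_r(\gamma)-|F|\ge k_n(\gamma)+t+1$, which is exactly the factor $n^{-k_n(\gamma)-t-1}$ we want. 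For the exponent of $s$ I would use the global traversal budget: each of the $2k$ traversals of $\gamma$ lies on an $F$-edge ($K$ of them), or on a once-traversed piece ($z^*_1$ of them), or at least twice on a multiply-traversed piece ($\ge 2z^*_{\ge2}$ of them), so $K+2z^*_{\ge2}+z^*_1\le 2k$; combining this with the surplus of $K$ over $t+|F|-k_r(\gamma)+1$ and with the excess traversals $\sum_i(m_i-2)z_i$ on the multiply-traversed pieces (each of which already comes with a matching power of $1/n$ from the previous display) shows that $2(K+z^*_{\ge2})+z^*_1$ can be absorbed into $2k+o(k)$ when weighed against $s=n^{o(1)}$, giving the claimed bound $s^{2k}n^{-k_n(\gamma)-t-1+o(1)}$, uniformly in $\gamma$ and $\sigma_U$.

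The hard part is precisely this last balancing act. One must resist the temptation to bound a multiply-traversed short piece by $s^{mz}/n^z$ (the naive consequence of $d<s^2\le s^m$), since its multiplicity $m$ may be of order $\log n$ and this would over-count by a genuine power of $n$; instead one retains the $d^z/n^z$ (equivalently $(s^2/n)^z$) shape and argues that the amount by which $2(K+z^*_{\ge2})+z^*_1$ exceeds $2k$ is always compensated either by the slack in $K\ge t+|F|-k_r(\gamma)+1$ or by the surplus traversals on the multiply-traversed pieces, both of which carry the corresponding powers of $1/n$. A secondary subtlety is that $t(\gamma_i)$, the sets $\calF(\gamma_i)$, and the cutting lemma all refer to the multi-path tangle structure, so one must check that deleting the edges of $F$ from $\gamma$ interacts correctly with those definitions.
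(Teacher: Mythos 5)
The approach here differs in a structurally significant way from the paper's, and the difference matters.

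The paper does \emph{not} factor out $(d/n)^K$ and then run Lemma~\ref{lem:non-tangled-segments} on what remains. Instead it partitions the canonical SAW-decomposition of $\gamma$ into the short segments $K$ (those meeting $E(\sZeta(\gamma_1))\cup E(\sZeta(\gamma_2))$, hence all of $F$) and the long segments $L$, and then treats the two pieces asymmetrically: $|\E[X_L\mid\sigma_U]|$ is controlled by Lemma~\ref{lem:non-tangled-segments}, which already gives the clean $s^{2k}n^{-|E(L)|}$ factor, while $\E[1_{\Omega_F}|X_K|\mid\sigma_U]$ is bounded crudely edge-by-edge --- $(d/n)^{m_e}$ on $F$-edges, $2d/n$ on the rest of $E(K)$ --- producing a factor $(2d/n)^{|E(K)|+t-k_r(\gamma)+1}$ after Lemma~\ref{lem:cutting-tangles}. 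The $(2d)^{O(1)}$ prefactor is then absorbed into $n^{o(1)}$ using that $|E(K)|\le 4\ell r$ with $r\le O(1)+|U|$: the short segments are short, and there are few of them. Crucially this routing never produces a factor $s^{K}$ at all; the only $s$-power comes from the $L$-sum and is at most $s^{2k}$ outright.

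Your route, by contrast, keeps the full $(d/n)^{K}$ on the $F$-edges, cuts at $F$, and applies Lemma~\ref{lem:non-tangled-segments} to the whole remainder. This is tighter on the $n$-exponent but pushes the problem into the $s$-exponent: after $d<s^2$ you arrive at $s^{2(K+z^*_{\ge 2})+z^*_1}$, and the traversal budget only gives $K+2z^*_{\ge 2}+z^*_1\le 2k$, leaving an excess $E = 2(K+z^*_{\ge 2})+z^*_1-2k = K-\sum_i(m_i-2)z_i$ that need not be nonpositive. Your claim that this excess ``can be absorbed into $2k+o(k)$'' using the slack $\Delta = K - (t+|F|-k_r(\gamma)+1)$ and the surplus traversals is precisely the step that needs a proof and doesn't get one: one needs $s^{E}n^{-\Delta} = n^{o(1)}$, and when $\Delta$ is near $0$ this reduces to $s^{K-\mathrm{surplus}}=n^{o(1)}$. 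Both $K$ and $t$ can be $\Theta(k/\ell)$ or larger (a non-backtracking walk that repeatedly circulates a small figure-eight has $t(\gamma_i)$ proportional to the number of circuits), so neither $K$ nor $E$ is $o(k)$ on its face, and there is no argument in your write-up that surplus $\ge K-O(1)$ in general. That inequality may well be provable --- it would say that a heavily traversed $F$-edge forces comparable surplus traversal on its non-$F$ neighbors --- but it is a genuine combinatorial lemma, not a side remark, and it is exactly what the paper's $K/L$ split is engineered to avoid having to prove. As written, the proposal has a gap at this step.

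Two smaller remarks. The bound $(s^z+d^z)/n^z\le 2d^z/n^z$ is false when $s>d$; you should write $2\max(s,d)^z/n^z$ and then use $\max(s,d)\le s^2$, which is what you implicitly do anyway. And note that the lemma is stated for a fixed pair $(F_1,F_2)$ while the paper's proof actually bounds the larger quantity $\E[1_{\Omega_{\calF(\gamma_1)}}1_{\Omega_{\calF(\gamma_2)}}|X_K|\mid\sigma_U]$ (a sum over all compatible $F$'s), which dominates the fixed-$F$ version; your fixed-$F$ framing is consistent with the statement, just a different emphasis.
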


\begin{proof}
  Let $\zeta^{(1)}, \dots, \zeta^{(r)}$ be the canonical
  SAW-decomposition of $\gamma$ with respect to $U$, and let $\Vends$ be its set of endpoints.
  Let $m_e$ be the number of times that the edge $e$ is traversed in $\gamma$.

  Let $K$ be the collection of $\zeta^{(i)}$ that contain
  some edge of either $E(\sZeta(\gamma_1))$ or $E(\sZeta(\gamma_2))$;
  note that $E(K) = E(\sZeta(\gamma_1)) \cup E(\sZeta(\gamma_2))$, but that $K$
  is not the same thing as $\sZeta(\gamma)$, because $\gamma$ may have some
  short segments that were not short in either $\gamma_1$ or $\gamma_2$.
  Let $L$ be the collection of $\zeta^{(i)}$ that do not
  belong to $K$.
  We split $X_\gamma = X_K X_L$, where $X_K = \prod_{e \in E(K)} W_e^{m_e}$
  and $X_L = \prod_{e \in E(L)} W_e^{m_e}$. Note that $\Omega_{\calF(\gamma_i)}$ for $i = 1, 2$ only depend on the edges in $E(K)$; hence,
  \begin{align}
    \left| \E \left[
        1_{\Omega_{\calF(\gamma_1)}} 1_{\Omega_{\calF(\gamma_2)}} X_\gamma
        \mid \sigma_U
    \right] \right|
    &= \left| \E \left[
        1_{\Omega_{\calF(\gamma_1)}} 1_{\Omega_{\calF(\gamma_2)}} X_K
        \mid \sigma_U
    \right]
    \E \left[ X_L
        \mid \sigma_U
    \right] \right| \notag \\
    &\le \E \left[
        1_{\Omega_{\calF(\gamma_1)}} 1_{\Omega_{\calF(\gamma_2)}} |X_K|
        \mid \sigma_U
    \right]
    \left| \E \left[ X_L
        \mid \sigma_U
    \right] \right|\label{eq:split-K-L}
  \end{align}
  The term involving $X_L$ may be bounded by
  Lemma~\ref{lem:non-tangled-segments} (recalling that the combined path
  $\gamma$ has length $2k$):
  \begin{equation}\label{eq:X_L-bound}
    \left|\E [X_L \mid \sigma_U]\right|
    \le (1 + n^{-1 + o(1)}) 2^r n^{-|E(L)|} s^{2k}.
  \end{equation}

  Next, we turn to the first term of~\eqref{eq:split-K-L}.
Recall that $\Omega_F$ is the event that no edge in $F$ appears. Hence,
if $F_1 \in \calF(\gamma_1)$, $F_2 \in \calF(\gamma_2)$, and $F = F_1 \cup F_2$ then
\begin{align*}
  \E [1_{\Omega_F} |X_K| \mid \sigma_U]
  &= \prod_{e \in F} (d/n)^{m_e}
  \prod_{e \in E(K) \setminus F} \E [|W_e|^{m_e} \mid \sigma_U] \\
  &\le (d/n)^{|F| + t(\gamma_1) + t(\gamma_2) - k_r(\gamma) + 1} (2d/n)^{|E(K)| - |F|} \\
  &\le (2d/n)^{|E(K)| + t - k_r(\gamma) + 1},
\end{align*}
where we applied Lemma~\ref{lem:cutting-tangles}
in the first inequality above. Hence,
\begin{align*}
  \E  \left[| 1_{\Omega_\calF(\gamma_1)} 1_{\Omega_{\calF(\gamma_2)}} X_K | \mid \sigma_U\right]
  &\le \E \left[ \sum_{F_1 \in \calF(\gamma_1)} \sum_{F_2 \in \calF(\gamma_2)} 1_{\Omega_{F_1 \cup F_2}} |X_K| \ \big\mid\  \sigma_U\right] \\
  &\le |\calF(\gamma_1)| |\calF(\gamma_2)|
  (2d/n)^{|E(K)| + t - k_r(\gamma) + 1} \\
    &\le k^{k_r(\gamma_1) + k_r(\gamma_2)}
  (2d/n)^{|E(K)| + t - k_r(\gamma) + 1}
\end{align*}
Combining this with~\eqref{eq:split-K-L} and~\eqref{eq:X_L-bound},
\[
    \left| \E \left[
        1_{\Omega_{\calF(\gamma_1)}} 1_{\Omega_{\calF(\gamma_2)}} X_\gamma
        \mid \sigma_U
    \right] \right|
    \le (1 + o(1))
        k^{k_r(\gamma_1) + k_r(\gamma_2)}
        (2d)^{|E(K)| + t + r} s^{2k} n^{-|E(K)| - |E(L)| - t + k_r(\gamma) - 1}.
\]
Now, $k_r(\gamma_1) + k_r(\gamma_2)$ is at most the number of returning
edges in $\gamma$, which is at most $O(1)$. Moreover, $|E(K)| \le 4 \ell r$,
$t \le r$,
and $r \le O(1) + |U| + 1$; hence, the quantity above is bounded by
\[
  s^{2k} n^{-|E(K)|-|E(L)|-t + k_r(\gamma) - 1 + o(1)}.
\]
Now,
$|E(K)| + |E(L)|$ is the number of distinct edges traversed by $\gamma$,
which is also equal to $k_n(\gamma) + k_r(\gamma)$.
Applying this in the exponent of $n$ completes the proof.
\end{proof}

\subsection{Proof of~\eqref{eq:thm-bad-paths}}

We will now combine Lemma~\ref{lem:one-tangled-path}
with our earlier bounds on the number of paths
(Lemma~\ref{lem:few-tangles}) to show that the total weight
of non-self-avoiding paths is negligible on the event
that the graph contains no tangles.

\begin{lemma}\label{lem:tangled-weights}
  Fix vertices $u$ and $v$, and
  let $\badpaths$ be the set of non-backtracking, non-self-avoiding
  paths from $u$ to $v$ of length $k$. For any set $U \subset V$
  with $|U| = n^{o(1)}$, if $k/\ell = o(\log n / \log \log n)$ then
  uniformly over all labellings $\sigma_U$ on $U$,
  \[
    \E \left[1_{\Xi} \Big(\sum_{\gamma \in \badpaths} X_\gamma \Big)^2\mid \sigma_U \right] \le s^{2k} n^{-3 + o(1)}.
  \]
\end{lemma}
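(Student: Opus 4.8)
The plan is to expand the square, pair the paths, reduce to the weighted path estimates already proved, and --- crucially --- carry the \emph{sharp} per-path weight bound rather than the lossy $s^{2k}$ one.

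First I would write
\[
\E\Bigl[1_\Xi\Bigl(\sum_{\gamma\in\badpaths}X_\gamma\Bigr)^{2}\,\Big|\,\sigma_U\Bigr]
=\sum_{\gamma_1,\gamma_2\in\badpaths}\E\bigl[1_\Xi X_{\gamma_1}X_{\gamma_2}\mid\sigma_U\bigr],
\]
and for each ordered pair form the combined path $\gamma=\gamma_1\overline{\gamma_2}$ (follow $\gamma_1$, then $\gamma_2$ in reverse): it runs from $u$ to $u$, has length $2k\le 4\alpha\log n$, visits $v$ at step $k$, satisfies $X_{\gamma_1}X_{\gamma_2}=X_\gamma$ (multiplicities add), and the map $(\gamma_1,\gamma_2)\mapsto\gamma$ is injective. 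Since $\gamma_1,\gamma_2$ are non-self-avoiding, $k_r(\gamma_1),k_r(\gamma_2)\ge 1$, whence $k_r(\gamma)\ge 1$. I split on $k_r(\gamma)$. If $k_r(\gamma)\ge k^{*}$ then $|\E[1_\Xi X_{\gamma_1}X_{\gamma_2}\mid\sigma_U]|\le|\E[X_\gamma\mid\sigma_U]|$, and Corollary~\ref{cor:X-many-returns}, applied to paths of length $2k$ from $u$ to $u$, bounds the total of these terms by $n^{-4+o(1)}\le s^{2k}n^{-3+o(1)}$. So it remains to treat $k_r(\gamma)<k^{*}=O(1)$.

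For $k_r(\gamma)<k^{*}$ I use $\Xi\subset\Omega_{\calF(\gamma_1)}\cap\Omega_{\calF(\gamma_2)}$ (Lemma~\ref{lem:cover-xi}) and estimate $\E[1_{\Omega_{F_1}}1_{\Omega_{F_2}}|X_\gamma|\mid\sigma_U]$ for $F_i\in\calF(\gamma_i)$ exactly as in the proof of Lemma~\ref{lem:one-tangled-path}, except that I keep the \emph{first} (sharp) bound of Lemma~\ref{lem:non-tangled-segments} for the long segments of $\gamma$ instead of the shortcut $s^{2k}$. Letting $p$ be the number of distinct edges of $\gamma$ traversed exactly once and $q$ the number traversed at least twice (so $p+q=k_n(\gamma)+k_r(\gamma)$ and, since the walk has length $2k$, $p+2q\le 2k$), and summing over $F_1,F_2$, this gives
\[
|\E[1_\Xi X_{\gamma_1}X_{\gamma_2}\mid\sigma_U]|\le n^{o(1)}\,s^{p}\max(s,d)^{q}\,n^{-p-q-t},\qquad t:=t(\gamma_1)+t(\gamma_2),
\]
where the $n^{o(1)}$ absorbs $2^{O(r)}$, the factors $|\calF(\gamma_i)|\le k^{O(k_r)}$, and the $(2d)^{O(\ell r)}$ from the short segments ($r=O(k_r+|U|)$ by Lemma~\ref{lem:r-bound}, and $\ell\log d=o(\log n)$). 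Now $\max(s,d)\le s^{2}$, so $s^{p}\max(s,d)^{q}\le s^{p+2q}\le s^{2k}$; and when $d\ge s$ one has the stronger $s^{p}d^{q}=s^{p+2q}(d/s^{2})^{q}\le s^{2k}\lambda^{-q}$. Multiplying by the Lemma~\ref{lem:few-tangles} count $k^{5k_r+4k_rk/\ell+8k_rt}\,n^{k_n(\gamma)-1}$ (whose exponential factor is $n^{o(1)}$ by Assumption~\ref{ass} and $k/\ell=o(\log n/\log\log n)$; the $-1$ now comes from $\gamma$ being forced through $v$ at step $k$), summing the geometric series in $t$, and using $p+q=k_n(\gamma)+k_r(\gamma)$, each stratum contributes at most $n^{o(1)}s^{2k}n^{-k_r(\gamma)-1}$ (and, when $d\ge s$, at most $n^{o(1)}s^{2k}\lambda^{-q}n^{-k_r(\gamma)-1}$). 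For $k_r(\gamma)\ge 2$ this is $\le n^{o(1)}s^{2k}n^{-3}$, and summing over the $O(\log^{2}n)=n^{o(1)}$ relevant strata $(k_n,k_r,q)$ keeps the total $\le s^{2k}n^{-3+o(1)}$.

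The remaining --- and delicate --- case is $k_r(\gamma)=1$, where the factor $n^{-k_r(\gamma)-1}=n^{-2}$ is not enough. The key structural point is that $\gamma$'s underlying graph is then unicyclic, and since \emph{both} $\gamma_1$ and $\gamma_2$ are non-self-avoiding each must traverse the unique cycle; hence \emph{every} edge of $\gamma$ is traversed at least twice, i.e.\ $p=0$, $q=k_n(\gamma)+1$, and (as the walk has length $2k$) $k_n(\gamma)+1\le k$. The weight bound becomes $n^{o(1)}\max(s,d)^{k_n(\gamma)+1}n^{-k_n(\gamma)-1}$: if $\max(s,d)=s$ this is $\le n^{o(1)}s^{k}n^{-k_n(\gamma)-1}$, while if $\max(s,d)=d$ I invoke the defining inequality $n^{2}d^{\alpha\log n}\le s^{2\alpha\log n}$ of $\alpha$, i.e.\ $d^{k}\le s^{2k}n^{-2}$, giving $\le n^{o(1)}s^{2k}n^{-k_n(\gamma)-3}$. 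Multiplying by the count $n^{k_n(\gamma)-1+o(1)}$ yields respectively $n^{o(1)}s^{k}n^{-2}$ and $n^{o(1)}s^{2k}n^{-4}$; since $s^{2k}\ge n^{2}$ forces $s^{k}\ge n$, both are $\le s^{2k}n^{-3+o(1)}$, and summing the $O(\log n)$ strata in $k_n(\gamma)$ preserves this. The main obstacle throughout is precisely this: running the weight estimate of Lemma~\ref{lem:one-tangled-path} \emph{without} collapsing singly- and doubly-traversed edges into a blanket $s^{2k}$, and then extracting the structural fact that a small $k_r(\gamma)$ forces almost all (for $k_r(\gamma)=1$, all) of $\gamma$'s edges to be doubled --- this is what upgrades the naive bound $s^{2k}n^{-2+o(1)}$ to the required $s^{2k}n^{-3+o(1)}$.
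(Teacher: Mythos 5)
Your overall plan (expand the square, pass to the combined closed walk $\gamma=\gamma_1\overline{\gamma_2}$, peel off the $k_r(\gamma)\ge k^*$ case with Corollary~\ref{cor:X-many-returns}, then control the remaining strata with the tangle-count bound and the path-weight bound) is structurally similar to the paper's, and some of your refinements --- in particular, carrying the sharp per-segment bound $s^p\max(s,d)^q$ rather than the blanket $s^{2k}$, and the observation that $k_r(\gamma)=1$ forces every edge of the unicyclic union graph to be traversed at least twice --- are genuinely interesting. But the proof as written has a gap at the step where you multiply the weight by the count.

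The issue is that the parameter called $t$ in Lemma~\ref{lem:few-tangles} and the parameter called $t$ in Lemma~\ref{lem:one-tangled-path} are not the same quantity. The weight bound you quote carries $n^{-t}$ with $t=t(\gamma_1)+t(\gamma_2)$; this is what comes out of the tangle-cover argument, because $\Xi\subset\Omega_{\calF(\gamma_1)}\cap\Omega_{\calF(\gamma_2)}$ gives you banned edges whose multiplicities add to at least $t(\gamma_1)+t(\gamma_2)$. The count bound from Lemma~\ref{lem:few-tangles} carries $k^{8k_r\,t}$ with $t=t(\gamma)$, the number of $\ell$-tangles of the \emph{combined} walk. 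These differ: one can verify $t(\gamma_1)+t(\gamma_2)\le t(\gamma)$, and the inequality is strict, e.g., whenever $\gamma_1$ and $\gamma_2$ are individually tangle-free but their union contains two nearby cycles. So when you ``sum the geometric series in $t$,'' there is no such series: a stratum with large $t(\gamma)$ (hence a large prefactor $k^{O(t(\gamma))}$ in the count) may consist entirely of pairs with $t(\gamma_1)+t(\gamma_2)=0$, giving no compensating $n^{-t}$ in the weight, and summing the Lemma~\ref{lem:few-tangles} bound over $t(\gamma)$ without such compensation yields $k^{O(k)}=n^{O(\log\log n)}$, which blows past $n^{o(1)}$. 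The paper sidesteps this by never invoking Lemma~\ref{lem:few-tangles} on $\gamma$ at all: it stratifies by $(t_1,t_2)=(t(\gamma_1),t(\gamma_2))$, counts $\gamma_1$ and $\gamma_2$ \emph{separately} via Lemma~\ref{lem:few-tangles} (so the $n^{t_i}$'s in the count and the $n^{-t_i}$'s in the weight match by construction), and then handles the dependence of $k_n(\gamma)$ on the overlap with a separate parameter $m$ counting edges new in $\gamma_2$ but not new in $\gamma$. A secondary issue: you apply Lemma~\ref{lem:few-tangles} to $\gamma$ to get $n^{k_n(\gamma)-1}$, but $\gamma$ begins and ends at the same vertex $u$ and may backtrack once at $v$; the lemma as stated and proved is for non-backtracking paths with distinct endpoints (the $-1$ in the exponent comes from the final new edge being forced to hit the fixed, already-placed endpoint), and the parenthetical ``forced through $v$ at step $k$'' is not a proof that the lemma still gives $n^{k_n-1}$ under these conditions. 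Both issues would need to be resolved before this version of the argument closes.
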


Before proving Lemma~\ref{lem:tangled-weights},
note that because $\sum_{\gamma \in \badpaths} X_\gamma$
is the same as $N_{u,v}^{(k)} - Y_{u,v}$ in
Theorem~\ref{thm:path-weights-main},~\eqref{eq:thm-bad-paths}
follows from Lemma~\ref{lem:tangled-weights}, Chebyshev's inequality,
and the fact that (Lemma~\ref{lem:xi-small})
$\P(\Xi) = 1 - n^{-1 + o(1)}$.

\begin{proof}[Proof of Lemma~\ref{lem:tangled-weights}]
 We may bound
  \[
    1_{\Xi} \Big(\sum_{\gamma \in \badpaths} X_\gamma \Big)^2
    \le \Big(\sum_{\gamma \in \badpaths} 1_{\Omega_\calF(\gamma)} X_\gamma \Big)^2
  \]
  because both sides are non-negative and, by Lemma~\ref{lem:cover-xi},
  they agree whenever the left hand side is non-zero. Next,
  we expand the sum above as
  \[
    \sum_{\gamma_1, \gamma_2 \in \badpaths}
    1_{\Omega_\calF(\gamma_1)}
    1_{\Omega_\calF(\gamma_2)} X_{\gamma_1} X_{\gamma_2}
  \]
  Combining the last two displayed equations,
  \[
    \E\left[ 1_\Xi \big(\sum_{\gamma \in \badpaths} X_\gamma\big)^2 \ \Big\mid\ \sigma_U\right]
    \le \sum_{\gamma_1 \in \badpaths}
    \sum_{\gamma_2 \in \badpaths}
    \E[1_{\Omega_\calF(\gamma_1)}
       1_{\Omega_\calF(\gamma_2)} X_{\gamma_1} X_{\gamma_2} \mid \sigma_U].
  \]
  Let $\gamma = \gamma(\gamma_1, \gamma_2)$ be $\gamma_1$ concatenated
  with the reverse of $\gamma_2$. Let $\Lambda_1 \subset \badpaths \times\badpaths$
  be the set of pairs $(\gamma_1, \gamma_2)$ such that
  $\gamma(\gamma_1, \gamma_2)$ has more than $k^*$ returning edges.
  Let $\Lambda_2 \subset (\badpaths \times \badpaths) \setminus \Lambda_1$ be the set of pairs
  $(\gamma_1, \gamma_2)$ such that $|(V(\gamma_1) \cup V(\gamma_2)) \cap U| \ge 2 \sqrt {\log n}$. Let $\Lambda = \Lambda_1 \cup \Lambda_2$. By 
  Corollary~\ref{cor:X-many-returns},
  \[
    \sum_{(\gamma_1, \gamma_2) \in \Lambda_1}
    |\E[1_{\Omega_\calF(\gamma_1)}
       1_{\Omega_\calF(\gamma_2)} X_{\gamma_1} X_{\gamma_2} \mid \sigma_U]|
       \le n^{-4 + o(1)}.
  \]
  Since $|U| = n^{o(1)} \le n^{1/2}$ for large enough $n$, the fraction
  of $\gamma_1 \in \badpaths$ such that $|V(\gamma_1) \cap U| \ge \sqrt{\log n}$ is at most $n^{-c \sqrt{\log n}}$ for some constant $c > 0$.
  By Lemma~\ref{lem:constant-returns},
  \begin{align*}
    \sum_{(\gamma_1, \gamma_2) \in \Lambda_2}
    |\E[1_{\Omega_\calF(\gamma_1)}
       1_{\Omega_\calF(\gamma_2)} X_{\gamma_1} X_{\gamma_2} \mid \sigma_U]|
       &\le
    \sum_{(\gamma_1, \gamma_2) \in \Lambda_2}
\left(\frac{2d}{n}\right)^{k_n(\gamma(\gamma_1, \gamma_2)) + k_r(\gamma(\gamma_1, \gamma_2))} \\
&\le (2d)^k n^{-c \sqrt{\log n} + O(1)} \le n^{-4 + o(1)},
  \end{align*}
  and so it remains to prove that
  \begin{equation}\label{eq:tangled-weights-target-1}
    \sum_{\gamma_1 \in \badpaths}
    \sum_{\gamma_2 \in \badpaths}
    1_{\{(\gamma_1, \gamma_2) \not \in \Lambda\}}
    \E[1_{\Omega_\calF(\gamma_1)}
       1_{\Omega_\calF(\gamma_2)} X_{\gamma_1} X_{\gamma_2} \mid \sigma_U]
       \le s^{2k} n^{-3 + o(1)}
  \end{equation}
  We will further split this sum according to the number
  of tangles in $\gamma_1$ and $\gamma_2$; that is, we
  define $\badpaths_t$ to be the set of $\gamma_1 \in \badpaths$
  with $t(\gamma_1) = t$. We will show that for any $t_1$ and $t_2$,
  \begin{equation}\label{eq:tangled-weights-target-2}
    \sum_{\gamma_1 \in \badpaths_{t_1}}
    \sum_{\gamma_2 \in \badpaths_{t_2}}
    1_{\{(\gamma_1, \gamma_2) \not \in \Lambda_1\}}
    \E[1_{\Omega_\calF(\gamma_1)}
       1_{\Omega_\calF(\gamma_2)} X_{\gamma_1} X_{\gamma_2} \mid \sigma_U]
       \le s^{2k} n^{-3 + o(1)}.
  \end{equation}
  Summing over the $k = n^{o(1)}$ possible values of $t_1$ and $t_2$,
  this will imply~\eqref{eq:tangled-weights-target-1} and complete
  the proof.

  To control~\eqref{eq:tangled-weights-target-2},
  fix $\gamma_1$ and consider the sum over
  $\gamma_2$. By Lemma~\ref{lem:few-tangles}, there are at most
  $n^{k_n + t_2 - 1 + o(1)}$ choices of $\gamma_2 \in \badpaths_{t_2}$
  that satisfy
  $k_n(\gamma_2) = k_n$ (denote this set by $\badpaths_{t_2,k_n}$).
  Note that the fraction of $\gamma_2 \in \badpaths_{t_2,k_n}$
  satisfying $k_n(\gamma) = k_n(\gamma_1) + k_n(\gamma_2) - m$
  is at most $k^{2m} (n-2k)^{-m}$. Indeed,
  $m = k_n(\gamma_1) + k_n(\gamma_2) - k_n(\gamma)$ is the number of
  edges that were new in $\gamma_2$ but not in $\gamma$. There are
  at most $k^m$ ways to choose which edges in $\gamma_2$ will no
  longer be new and each one has at most $k^m$ choices for a non-new
  step, versus at least $(n-2k)^m$ choices for a new step.
  Set $\badpaths_{t_2,k_n,m,\gamma_1}$ to be the paths
  $\gamma_2 \in \badpaths_{t_2,k_n}$ satisfying $k_n(\gamma) = k_n(\gamma_1) + k_n(\gamma_2) - m$. Note that if $(\gamma_1, \gamma_2) \not \in \Lambda$ then
  the total number of returning edges in $\gamma$ is at most $k^* = O(1)$,
  and the number of vertices in $U$ intersecting $V(\gamma)$ is at
  most $2 \sqrt{\log n}$.
  Hence, Lemma~\ref{lem:one-tangled-path} applied
  with $U = U \cap V(\gamma)$
  implies that for any $\gamma_1 \in \badpaths_{t_1}$ and any $m$,
  \begin{align*}
    \sum_{\gamma_2 \in \badpaths_{t_2,k_n,m,\gamma_1}}
    1_{\{(\gamma_1, \gamma_2) \not \in \Lambda\}}
    \E[1_{\Omega_\calF(\gamma_1)}
       1_{\Omega_\calF(\gamma_2)} X_{\gamma_1} X_{\gamma_2} \mid \sigma_U]
    &\le |\badpaths_{t,k_n,m,\gamma_1}|
       s^{2k} n^{-k_n(\gamma_1) - k_n(\gamma_2) + m - t_1 - t_2 - 1 + o(1)} \\
    &\le |\badpaths_{t,k_n}|
       s^{2k} n^{-k_n(\gamma_1) - k_n(\gamma_2) - t_1 - t_2 - 1 + o(1)} \\
       &\le s^{2k} n^{-k_n(\gamma_1) - t_1 - 2 + o(1)}.
  \end{align*}
  Taking the sum over $k_n \le k$ and $m \le k$ only contributes
  a factor of $n^{o(1)}$; hence,
  \[
    \sum_{\gamma_2 \in \badpaths_{t_2}}
    1_{\{(\gamma_1, \gamma_2) \not \in \Lambda\}}
    \E[1_{\Omega_\calF(\gamma_1)}
       1_{\Omega_\calF(\gamma_2)} X_{\gamma_1} X_{\gamma_2} \mid \sigma_U]
     \le s^{2k} n^{-k_n(\gamma_1) - t_1 - 2 + o(1)}.
  \]
  Summing over the $n^{k_n + t_1 - 1 + o(1)}$ possible $\gamma_1 \in \badpaths_{t_1,k_n}$
  and then over the $k$ possible values of $k_n$, we see that
  the right hand side of~\eqref{eq:tangled-weights-target-2} 
  is bounded by $s^{2k} n^{-3 + o(1)}$, as claimed.
\end{proof}

Finally, note that we have finished the proof of
Theorem~\ref{thm:path-weights-main}. Indeed,
we proved~\eqref{eq:thm-first-moment} at the beginning of
Section~\ref{sec:saw-paths-proofs},~\eqref{eq:thm-second-moment}
in Section~\ref{sec:second-mom},~\eqref{eq:thm-cross-moments} in
Section~\ref{sec:cross-mom}, and we just proved~\eqref{eq:thm-bad-paths}.

\section*{Acknowledgments}
The authors are grateful to Cris Moore and Lenka Zdeborov\'a for stimulating and interesting discussions on many aspects of the block model.
They also thank the Charles Bordenave and the anonymous referees for
pointing out several simplifications and corrections.
\bibliography{block-model,all}
\bibliographystyle{plain}
\end{document}